\theoremstyle{plain}
\newtheorem{theorem}{Theorem}[section]
\newtheorem{lemma}[theorem]{Lemma}
\newtheorem{lem}[theorem]{Lemma}
\newtheorem{proposition}[theorem]{Proposition}
\newtheorem{defn}[theorem]{Definition}
\numberwithin{equation}{section}
\theoremstyle{plain}
\numberwithin{equation}{section}
\theoremstyle{remark}
\newcommand{\Real}{\mathbb R}
\newcommand{\zbh}[1]{z_{j{#1}}}
\newtheorem{remark}[theorem]{Remark}
\def\bbR{{\mathbb {R}}}
\def\bbZ{{\mathbb {Z}}}
\def\la{\lambda}
\def\la{\lambda}
\def\lc{\lesssim}
\def\intslash{\rlap{\kern  .32em $\mspace {.5mu}\backslash$ }\int}
\def\qsl{{\rlap{\kern  .32em $\mspace {.5mu}\backslash$ }\int_{Q_x}}}
\def\Re{\operatorname{Re\,}}
\def\Im{\operatorname{Im\,}}
\def\emph#1{{\it #1 }}
\def\lc{\lesssim}
\def\gc{\gtrsim}
\def\eps{\varepsilon}
\def\la{\lambda}
\def\bbC{{\mathbb {C}}}
\def\bbN{{\mathbb {N}}}
\def\bbR{{\mathbb {R}}}
\def\bbZ{{\mathbb {Z}}}
\font \roman = cmr10 at 10 true pt
\def\z{{\hbox{\roman z}}}
\def\be#1{\begin{equation}\label{#1}}
\def\ee{\end{equation}}
\def\bas{\begin{align*}}
\def\eas{\end{align*}}
\def\bi{\begin{itemize}}
\def\ei{\end{itemize}}
\def\eps{\varepsilon}
\def\emph#1{{\it #1}}
\def\textbf#1{{\bf #1}}
\def\intslash{\rlap{\kern  .32em $\mspace {.5mu}\backslash$ }\int}
\def\qsl{{\rlap{\kern  .32em $\mspace {.5mu}\backslash$ }\int_{Q_x}}}
\begin{document}
\date{\today}

\title
[Restriction estimate for complex curves] {Restriction of the Fourier
transform to some complex curves}

\author[]
{Jong-Guk Bak  and  Seheon Ham}

\address {Department of Mathematics\\ Pohang University of Science and Technology
\\ Pohang 790-784, Korea} \email{bak@postech.ac.kr, beatles8@postech.ac.kr}

\subjclass{42B10, 42B99} \keywords{Fourier transforms of measures,
complex curves, Fourier restriction problem, affine arclength
measure}

\thanks{
J.-G. Bak was supported in part by National Research Foundation grant 2010-0024861
from the Ministry of Education, Science and Technology of Korea}

\begin{abstract}
The purpose of this paper is to prove a Fourier restriction estimate for certain 2-dimensional
surfaces in $\bbR^{2d}$, $d\ge 3$. These surfaces are defined by a complex curve $\gamma(z)$ of simple type, which is given by a mapping of the form
\[ z\mapsto \gamma (z) = \Big(z, \, z^2,\cdots, \, z^{d-1}, \, \phi(z) \Big) \]
where $\phi(z)$ is an analytic function on a domain $\Omega \subset \bbC$. This is regarded as a real mapping $z=(x,y) \mapsto \gamma(x,y)$ from $\Omega \subset \bbR^2$ to $\bbR^{2d}$.

Our results cover the case $\phi(z) = z^N$ for any nonnegative integer $N$, in all dimensions $d\ge 3$. The main result is a uniform estimate, valid when $d=3$, where $\phi(z)$ may be taken to be an arbitrary polynomial of degree at most $N$. It is uniform in the sense that the operator norm is independent of the coefficients of the polynomial. These results are analogues of the uniform restricted strong type estimates in \cite{BOS3}, valid for polynomial curves of simple type and some other classes of curves in $\bbR^d$, $d\ge 3$.
\end{abstract}

\maketitle

\section{Introduction and statement of results}
\label{intro}
Let $t\mapsto \gamma(t)$ be a curve in $\bbR^d$, defined on an interval $I$.
Let us consider a Fourier restriction estimate of the following form:
\begin{equation} \label{affrestr-0}
\Big(\int_I
 |\widehat f (\gamma (t))|^{q} \, w(t)\, dt\Big)^{1/q} \le
C \|f\|_{L^{p}(\bbR^d)}
\end{equation}
where
$\widehat f (\xi)$ denotes the Fourier transform of $f \in L^p(\bbR^d)$ and
\begin{equation} \label{theweight}
 w (t) = |\tau(t)|^{\frac{2}{d^2+d}}, \text{ with } \tau(t) = \det(\gamma'(t), \cdots,
\gamma^{(d)}(t)) .
\end{equation}
%
%
Here, the measure $w(t)\, dt$ is called the `affine arclength measure' ({\it cf.} \cite{DM1,DM2,BOS1}). We are mostly interested in proving {\it uniform} estimates for \eqref{affrestr-0}, that is, we would like to take the constant $C$ to be uniform over given classes of curves. Also, whenever appropriate we would like to prove {\it global} estimates, that is, for $I = \bbR$ or $(0,\infty)$.

For the interesting history of this problem we refer the reader to \cite{D2, BOS1, BOS3} and the references therein.
The endpoint versions of the Fourier restriction estimates \eqref{affrestr-0} for some
classes of curves were established in \cite{BOS3}. We shall now describe two
such results. The first concerns the case of `monomial' curves
of the form
\begin{equation}\label{monomial}
t\mapsto \gamma_a(t)=(t^{a_1},\, t^{a_2},\cdots,\, t^{a_d}), \quad
0<t<\infty
\end{equation}
where $a=(a_1,\dots, a_d)$ is a $d$-tuple of arbitrary real numbers. For $d\ge 2$, let $p_d = (d^2+d+2)/(d^2+d)$.
The endpoint result
may be stated as follows:

\begin{theorem}{\rm (\cite{BOS3})} \label{powerthm} Let $w(t)\, dt = w_a(t)\, dt$ denote the affine arclength measure for the curve \eqref{monomial}, where $w (t)$ is given by \eqref{theweight} with $\gamma = \gamma_a$.
Then, for $d\ge 3$, there is a constant $C(d)<\infty$ such that for all
$f\in L^{p_d ,1}(\bbR^d)$,
\begin{equation} \label{affrestr-1}
\Big(\int_0^\infty
 |\widehat f (\gamma_a(t))|^{p_d} \, w_a(t) \, dt\Big)^{1/p_d} \le
C(d)\|f\|_{L^{p_d ,1}(\bbR^d)}.
\end{equation}
\end{theorem}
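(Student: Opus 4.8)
The plan is to reduce \eqref{affrestr-1} to a restricted strong-type bound, split the $t$-integral into dyadic blocks and use the affine invariance of the weight to make every block a single model scale, prove a Drury-type extension estimate at that scale by an iterated $TT^\ast$ argument resting on a ``convolution'' (offspring) inequality, and finally sum the blocks. Before anything else, observe the trivial case: if two of the $a_i$ coincide or if some $a_i=0$, then $\tau\equiv 0$ in \eqref{theweight}, so $w_a\equiv 0$ and there is nothing to prove. Otherwise, differentiating \eqref{monomial} and simplifying the Wronskian — pull $t^{a_i-k}$ out of the $(i,k)$ entry, then reduce the remaining falling-factorial matrix by column operations to a Vandermonde matrix — gives $\tau(t)=C_a\,t^{\sum_i a_i-\binom{d+1}{2}}$ with $C_a=\prod_i a_i\prod_{i<j}(a_j-a_i)\ne 0$, hence $w_a(t)=|C_a|^{2/(d^2+d)}t^{\sigma_a}$ with $\sigma_a=\tfrac{2}{d^2+d}\bigl(\sum_i a_i-\binom{d+1}{2}\bigr)$.

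Denote by $R$ the restriction operator in \eqref{affrestr-1}. By the standard characterization of boundedness on $L^{p_d,1}$, the estimate \eqref{affrestr-1} is equivalent to the restricted strong-type bound $\|R\chi_E\|_{L^{p_d}(w_a\,dt)}\lesssim|E|^{1/p_d}$ for measurable $E\subset\bbR^d$ of finite measure. Writing $(0,\infty)=\bigcup_{k\in\bbZ}[2^k,2^{k+1})$, let $R_k$ be the part of $R$ on the $k$-th block. For a single block, pairing with $g\in L^{p_d'}(w_a\,dt)$ and using that $\langle R_k\chi_E,\,g\,w_a\,dt\rangle=\int_E\widehat{g\,w_a\,dt}$ (the mass of $g\,w_a\,dt$ carried by the $k$-th block) together with H\"older, restricted strong type for $R_k$ follows from the \emph{extension} estimate $\|\widehat{g\,w_a\,dt}\|_{L^{p_d'}(\bbR^d)}\lesssim\|g\|_{L^{p_d'}(w_a\,dt)}$, $p_d'=\tfrac12(d^2+d+2)=\binom{d+1}{2}+1$, restricted to the $k$-th block; note both sides of this extension estimate carry the same exponent $p_d'$. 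Next, $\gamma_a(2^kt)=D_k\gamma_a(t)$ with $D_k=\diag(2^{ka_1},\dots,2^{ka_d})$, and the affine arclength measure transforms covariantly under this dilation — the factor $|C_a|^{2/(d^2+d)}2^{k\sigma_a}$ produced by $t\mapsto 2^kt$ is exactly absorbed by the Jacobian of $x\mapsto D_k^Tx$ on $\bbR^d$ — so every block's extension estimate is linearly equivalent, \emph{with the same constant}, to a single model estimate on $[1,2)$, on which $w_a$ is comparable to the constant $|C_a|^{2/(d^2+d)}$ that then cancels from both sides.

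It remains to prove (i) the single-scale extension estimate on $[1,2)$ with a constant depending only on $d$, and (ii) that the $R_k$ can be summed. For (i), on $[1,2)$ the curve $\gamma_a$ is smooth with non-vanishing torsion, so this is a Drury-type estimate and I would obtain it by iterating $TT^\ast$. The operator $R_IR_I^\ast$ is convolution with the inverse Fourier transform of the affine arclength measure $\nu$ on $\gamma_a([1,2))$, and after $d$ iterations the estimate reduces to the geometric statement that the $d$-fold convolution $\nu\ast\cdots\ast\nu$ is absolutely continuous with $L^\infty$ density bounded by $C(d)$. This ``offspring'' inequality comes from the change of variables $(t_1,\dots,t_d)\mapsto\gamma_a(t_1)+\cdots+\gamma_a(t_d)$, whose Jacobian is $\prod_i a_i$ times a generalized Vandermonde determinant in the $t_j$; against it the product $w_a(t_1)\cdots w_a(t_d)$ is integrable over each fibre with a bound \emph{independent of $a$}, and it is precisely the exponent $2/(d^2+d)$ in \eqref{theweight} that makes the homogeneities balance so that this holds. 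Feeding the offspring bound into the $TT^\ast$ iteration and — since $p_d'$ is generally not an even integer when $d\ge 3$ — invoking Christ's combinatorial/inductive sharpening of Drury's argument yields the single-scale extension estimate at the endpoint exponent $p_d'$.

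For (ii), each $R_k$ is now of restricted strong type $(p_d,p_d)$ with a constant uniform in $k$ and $a$; combining these per-block bounds with a complementary elementary estimate on $R_k\chi_E$ that decays as the scale $k$ moves away from the scale determined by $|E|$, and summing in $k$ \`a la Christ, gives $\|R\chi_E\|_{L^{p_d}(w_a\,dt)}\lesssim|E|^{1/p_d}$, which is exactly the content of \eqref{affrestr-1}. I expect the genuinely hard step to be the offspring inequality \emph{with a constant uniform over all real tuples} $a=(a_1,\dots,a_d)$: for arbitrary $a_i$ the curve $\gamma_a$ degenerates in several distinct ways (exponents nearly coinciding, approaching $0$, or widely separated), and one must verify in each regime that the generalized Vandermonde Jacobian is dominated by exactly the power of $t$ carried by $w_a(t_1)\cdots w_a(t_d)$, and that the constants $C_a$ track correctly through the rescaling; a secondary difficulty is that the block summation sits at the endpoint, so it cannot use orthogonality and must go through the Lorentz space — which is why $L^{p_d,1}$, not $L^{p_d}$, appears on the right of \eqref{affrestr-1}.
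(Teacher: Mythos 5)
Your proposal reduces each dyadic block, after rescaling to the model interval $[1,2)$, to the \emph{strong-type} extension estimate $\|\widehat{g\,w_a\,dt}\|_{L^{p_d'}(\bbR^d)}\lesssim\|g\|_{L^{p_d'}(w_a\,dt)}$, and this is a reduction to a false statement: that estimate is dual to the strong-type restriction bound $L^{p_d}\to L^{p_d}$, which fails at the endpoint even for the nondegenerate model curve on a compact interval (as recalled after Theorem \ref{polthm}, \cite{BOS1} shows $L^{p_d,1}$ is the smallest admissible space, so the dual extension bound can only be weak-type $L^{p_d'}\to L^{p_d',\infty}$). Moreover, even the correct restricted strong type on a single nondegenerate block is not obtainable the way you describe: iterated $TT^\ast$ \`a la Drury gives the open range $p<p_d$, Christ's refinement gives the endpoint only in the smaller range $p<p_c(d)=\frac{d^2+2d}{d^2+2d-2}$, and the endpoint at $p_d$ for the nondegenerate case is precisely the hard theorem of \cite{BOS1}, proved with offspring curves and a multilinear Lorentz-space interpolation, not by convolution powers. (Your intermediate claim that the $d$-fold convolution of affine arclength measure has bounded density is also false; the density blows up like an inverse Vandermonde factor near the diagonal.)

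Two further gaps are the ones you flag yourself but do not close, and they are where the real work lies. First, uniformity in $a$: on $[1,2)$ the weight equals $|C_a|^{2/(d^2+d)}t^{\sigma_a}$ and is comparable to a constant only up to the factor $2^{|\sigma_a|}$, which is unbounded over $a$; and the rescaled curve on $[1,2)$ degenerates as the exponents coalesce or approach $0$, so a ``single-scale estimate with constant $C(d)$'' is essentially the original theorem again, not a consequence of nonvanishing torsion. Second, the dyadic summation in $k$ sits exactly at the endpoint, where per-block restricted (or weak) type bounds do not sum; no complementary decay estimate is given, and this obstruction is why the proof of Theorem \ref{powerthm} in \cite{BOS3} does not decompose in $t$ at all: as noted in the Remark in Section \ref{intro}, it uses the exponential parametrization $t=e^{s}$ (going back to \cite{DM2}), which makes the class of curves translation-invariant and closed under forming offspring curves, and then runs the global bootstrap on the quantity $A_\la$ with the Jacobian and torsion lower bounds (arithmetic-mean form) and the interpolation theorem for multilinear operators with symmetries, the dyadic decomposition being performed in the level sets of the weight rather than in the parameter $t$.
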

The constant in \eqref{affrestr-1} is uniform in the sense that
it does not depend on $a_1, a_2, \cdots, a_d$. We would like to point out that the versions of \eqref{affrestr-1} fail when $d=2$ (for $p_2 = 4/3$), even in the nondegenerate case and even when the target space is replaced by $L^1(I; wdt)$ for a finite interval $I$. (See \cite{bcss}; see also Section 1 in \cite{BOS1}.)

The $(L^p, L^q)$ estimates, in the optimal range $1\le p < p_d$, $q=2p'/(d^2+d)$, follow by interpolating \eqref{affrestr-1} and the $(L^1, L^\infty)$ estimate. These estimates were proved earlier in \cite{BOS1}, following the work in \cite{DM2}. (For a general result in the 2-dimensional case see, for instance, \cite{Sj} and the references therein.)

Similar results have been proved for some other classes of curves including the polynomial curves of `simple'
type given by
\begin{equation} \label{polcurve}
\Gamma_b(t)= \Big(t, \, t^2 ,\cdots,\, t^{d-1}
, \,P_b(t)\Big), \quad t\in \bbR
\end{equation}
%
%
in $\bbR^d$, where $P_b$  is an arbitrary polynomial of
degree $N\ge 0$, with the coefficients $(b_0, \cdots, b_N)=b\in
\bbR^{N+1}$. Namely,
$ P_b(t) = \sum_{j=0}^N b_j t^j$.
The affine arclength measure is given by $W_b(t) \, dt$,
where
$W_b(t)= |\tau(t)|^{2/(d^2+d)} = |c_d \, P_b^{(d)}(t)|^{2/(d^2+d)}$ with $c_d = 2! \cdots (d-1)!$.
The endpoint estimate in this case is the following
\begin{theorem}{\rm (\cite{BOS3})} \label{polthm}
For $d\ge 3$, there is a constant $C(N)<\infty$ so that for all
$f\in L^{p_d ,1}(\bbR^d)$ and $b\in \bbR^{N+1}$,
\begin{equation} \label{affrestr-2}
\Big(\int_{-\infty}^\infty
 |\widehat f (\Gamma_b(t))|^{p_d} \, W_b(t) \, dt\Big)^{1/p_d} \le
C(N)\|f\|_{L^{p_d ,1}(\bbR^d)}.
\end{equation}
\end{theorem}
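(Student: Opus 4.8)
The plan is to prove \eqref{affrestr-2} by localizing the curve $\Gamma_b$ to pieces on which it is nondegenerate, renormalizing each piece by an affine map of $\bbR^d$ (which the affine arclength measure is designed to tolerate), applying a single-scale endpoint restriction estimate on each piece, and finally summing the contributions of the infinitely many pieces; the summation at the endpoint exponent $p_d$ is the crux. This parallels the proof of Theorem~\ref{powerthm}, the new features being the localization dictated by the zeros of the torsion and the fact that the renormalized pieces are perturbed, rather than exact, monomial curves. If $N<d$ then $P_b^{(d)}\equiv 0$, so $W_b\equiv 0$ and there is nothing to prove; assume $N\ge d$, so that $\tau(t)=c_dP_b^{(d)}(t)$ is a polynomial of degree $m:=N-d$. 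Since $\deg\tau\le m$, deleting the (at most $m$) real zeros of $\tau$ partitions $\bbR$ into at most $m+1$ open intervals on each of which $\tau$ has a constant sign, and it suffices to bound the integral in \eqref{affrestr-2} over one such interval $J$ by $C(N)\|f\|_{L^{p_d,1}(\bbR^d)}$. Using the structure theory of polynomial curves of simple type (\cite{DM2,BOS1}) one decomposes $J$ into a countable family $\{I_\ell\}$: boundedly many ``central'' intervals, Whitney intervals $I$ with $|I|\sim\dist(I,\partial J)$ clustering at each finite endpoint $\xi$ of $J$ (a zero of $\tau$ of some multiplicity $\nu$, near which $|\tau(t)|\sim|t-\xi|^{\nu}$), and, when $J$ is unbounded, dyadic intervals $\{|t|\sim 2^k\}$ near $\infty$; on each $I_\ell$ there are an invertible affine map $A_\ell$ of $\bbR^d$ and numbers $t_\ell,\rho_\ell$ such that $v\mapsto A_\ell\Gamma_b(t_\ell+\rho_\ell v)$ is, on a fixed compact interval, a curve of simple type that is nondegenerate with torsion $\sim 1$ and all coefficients bounded, uniformly in $b$ and $\ell$ --- here $A_\ell$ simply strips from the last coordinate its Taylor polynomial of degree $\le d-1$ at the base point, which lies in the linear span of $1,t,\dots,t^{d-1}$.

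For the single-scale estimate one uses that the exponent $2/(d^2+d)$ in \eqref{theweight} makes the inequality \eqref{affrestr-0} at $p=p_d$ invariant under $f\mapsto f\circ A$, $\Gamma\mapsto A^{-1}\circ\Gamma$ for invertible affine $A$ and under reparametrization: the powers of $|\det A_0|$ and $\rho_\ell$ produced by the Jacobians, by the $L^{p_d}$-norm of $f$, and by the weight cancel exactly because $p_d-1=2/(d^2+d)$. Hence it suffices to prove $\bigl(\int_I|\widehat{\chi_E}(\Gamma(t))|^{p_d}\,W(t)\,dt\bigr)^{1/p_d}\le C\,|E|^{1/p_d}$, with $C$ uniform, for a nondegenerate curve $\Gamma$ of simple type on a fixed interval with controlled coefficients. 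As in the proof of Theorem~\ref{powerthm} this is attacked by the iterated $TT^{*}$ (equivalently, $d$-linear) method, which reduces the restriction estimate to a quantitative statement about the behaviour of the affine arclength measure under the $d$-fold sum map $\Psi(t_1,\dots,t_d)=\Gamma(t_1)+\cdots+\Gamma(t_d)$, namely to a lower bound for its Jacobian $\det\bigl(\Gamma'(t_1),\dots,\Gamma'(t_d)\bigr)$ in terms of the weights; for curves of simple type this Jacobian factors as a Vandermonde determinant $\prod_{i<j}(t_j-t_i)$ times a divided difference of $P_b'$, hence is comparable to $\prod_{i<j}|t_j-t_i|$ times a torsion value at an intermediate point (which is $\sim 1$ on the renormalized piece), and one concludes with the same Lorentz-space bookkeeping as in the monomial case --- for the pieces that are close to a monomial curve by perturbation off Theorem~\ref{powerthm}, and for the boundedly many central pieces by a direct nondegenerate endpoint estimate.

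The principal remaining task, and the step I expect to be the main obstacle, is to sum the single-scale bounds over the infinitely many pieces $I_\ell$ while keeping the constant independent of the coefficients $b$. Each piece contributes $\le C|E|^{1/p_d}$ with $C$ independent of $\ell$, so naively adding the $p_d$-th powers fails; on the other hand the trivial bound $\bigl(\int_{I_\ell}|\widehat{\chi_E}(\Gamma_b(t))|^{p_d}W_b(t)\,dt\bigr)^{1/p_d}\le|E|\,\bigl(\int_{I_\ell}W_b\bigr)^{1/p_d}$, together with $\int_{I_\ell}W_b\lesssim 2^{-j(1+2\nu/(d^2+d))}$ on the Whitney scale $2^{-j}$ near a zero of multiplicity $\nu$, is summable near the zeros of $\tau$ but grows near $\infty$. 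The resolution, which is the refinement of \cite{BOS3} over the off-diagonal results of \cite{BOS1,DM2}, is to interpolate the two bounds against each other piece by piece, organizing the sum by a dyadic decomposition of the super-level sets of $t\mapsto\widehat{\chi_E}(\Gamma_b(t))$ and invoking the real-interpolation characterization of the restricted strong-type $(p_d,p_d)$ inequality, so that on each piece one uses whichever combination of the scale-invariant endpoint bound and the scale-sensitive trivial bound is favourable; the resulting geometric series then converges. The other substantial ingredient is algebraic: besides the Jacobian factorization, one needs the comparability, up to acceptable errors, of the divided difference of $P_b'$ with products of the torsion values $|P_b^{(d)}(t_i)|$, obtained from B\'ezout- and Remez-type estimates for polynomials --- but this follows the template already set out in \cite{DM2,BOS1}.
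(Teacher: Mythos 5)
Your route (Whitney-type localization near the zeros of the torsion, affine renormalization of each piece to a nondegenerate curve at unit scale, a single-scale endpoint estimate, then summation over the pieces) is genuinely different from the proof in \cite{BOS3}, which is the argument the present paper adapts in Section 6 for its complex analogue. That proof is global and never decomposes the curve into nondegenerate pieces: one defines $A_\la$ as a supremum of localized extension-operator norms over \emph{all} offspring curves of simple type with bounded coefficients, proves a $d$-linear estimate via the change of variables $(t,h)\mapsto\sum_j\Gamma(t+h_j)$, uses two uniform lower bounds --- the Jacobian bound and the torsion bound for offspring curves, at least one of which must be in arithmetic-mean (maximum) rather than geometric-mean form --- performs the dyadic decomposition in the Vandermonde determinant $v(h)$ (not in the parameter $t$), plays a weak-type estimate at $Q=p_d'$ (which reintroduces $A_\la$) against an $L^2$/Plancherel estimate, does the Lorentz bookkeeping on the level sets of the weight, applies the interpolation theorem with symmetries (Theorem \ref{multtrick}), and closes with the bootstrap $A_\la\le C(N)\,A_\la^{(d-2)/(d^2+2d)}$. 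Uniformity in $b$ comes from the uniformity of the two lower bounds, not from renormalizing individual pieces.

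The concrete gap in your proposal is the summation step, and as described it fails. Because the affine arclength weight makes the endpoint inequality exactly invariant under the affine renormalizations you use, each dyadic piece near infinity carries the \emph{same} endpoint constant, with no decay in the piece index, while the trivial bound $|E|\,\bigl(\int_{I_\ell}W_b\,dt\bigr)^{1/p_d}$ grows geometrically there. Taking on each piece whichever bound is favourable and summing leaves, for all scales beyond the crossover determined by $|E|$, infinitely many terms each comparable to $|E|^{1/p_d}$; no reorganization by super-level sets of $\widehat{\chi_E}\circ\Gamma_b$ can create convergence, since the required gain must come from the function side (different pieces of the curve see different frequency slabs of $f$), and $L^{p_d,1}$ with $p_d<2$ admits no orthogonality inequality of the form $\sum_k\|f_k\|^{p_d}\lc\|f\|^{p_d}$. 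This is exactly why the two-bound interpolation scheme yields only the open range $1\le p<p_d$ (as in \cite{DM2,BOS1}), and why \cite{BOS3} abandons localization-plus-summation in favour of the global offspring-curve bootstrap (and, for the monomial curves of Theorem \ref{powerthm}, exponential parametrization) to reach the endpoint.
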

Both Theorems \ref{powerthm} and \ref{polthm} are optimal with respect to the two Lorentz exponents occurring on both sides, if we consider them as weighted Lorentz norm estimates: $L^{p_d,1}(\bbR^d) \to L^{p_d, p_d} (w \, dt)$. In particular, the strong type $(L^{p_d}, L^{p_d})$ estimate fails.
This fact is an easy consequence of the corresponding result in \cite{BOS1} for the nondegenerate case, where it was shown that $L^{p_d,1}(\bbR^d)$ was the smallest possible space and $L^{p_d,p_d}(w \, dt)$ the largest possible space on the scale of Lorentz spaces.
Moreover, the weight functions $w$ ($=w_a$ or $W_b$) are sharp up to a multiplicative constant. (See \cite{BOS3, Obaff} and Section 2 below.)

\medskip

\begin{remark}
One can also consider general polynomial curves of the form $\gamma(t) = (P_1(t), \cdots, P_d(t))$, where each $P_j$ is a polynomial of degree at most $N$. Dendrinos and Wright \cite{DeW} established the uniform Jacobian estimate for the mapping $(t_1, \cdots, t_d) \mapsto \sum_{j=1}^d \gamma(t_j)$. This implies a uniform restriction estimate in the reduced range $1\le p < p_c(d) = \frac{d^2+2d}{d^2+2d-2}$. (This range is commonly referred to as `Christ's range' of exponents.) This is the range where one does not need the `method of offspring curves', hence the torsion bound is not needed here. In \cite{BOS3} (see Proposition 8.1 there) this range was extended a little by combining an argument of Drury \cite{D2} with a result of Stovall \cite{Sto} on averaging operators.

The main obstacle for obtaining a uniform estimate in the full range, by means of the method of offspring curves, is that the second crucial estimate concerning the {\it torsion} of the offspring curves (as described in the beginning of Section 6) breaks down for curves of non-simple type. At the moment the only known approach that gives the full range $1\le p<p_d$ (and also the restricted strong type for $p=p_d$) for curves of non-simple type is the method based on `exponential parametrization', which originated in \cite{DM2} and was used in \cite{BOS3} to prove Theorem \ref{powerthm}. (See also \cite{DeM} and the remark at the end of Section 6 of \cite{BOS3}.)
\end{remark}

{\sl Notation.} Adopting the usual convention, we let $C$ or $c$ represent strictly positive constants whose value may not be the same at each occurrence. These constants may usually depend on $N$, $d$ and $p$, but they will always be independent of $f$. (In addition, they are uniform over the class of $\gamma(z)$ given in Theorem \ref{comppolthm}. In particular, they are independent of the coefficients of the polynomial $\phi(z)$ throughout the proof of that result.) Their dependence on the parameters is sometimes indicated by a subscript or shown in parentheses. We write $A \lc B$ or $B \gc A$ to mean $A \le C B$, and $A\approx B$ means both $A\lc B$ and $B\lc A$.

\medskip
{\sl Complex curves.}
Let us now consider an analogous problem for a `complex curve' in
$\bbC^d$, $d\ge 2$, of simple type. By this we mean a mapping of the following form:
\begin{equation} \label{compcurve}
z\mapsto \gamma (z) = \big(z, {z^2},\cdots, {z^{d-1}}
, \phi (z)\big), \quad z\in \Omega
\end{equation}
where  $\phi(z)$ is an analytic function on a domain $\Omega \subset \bbC$. We will regard this mapping as a 2-dimensional surface in $\bbR^{2d}$, given by the real mapping
\[ z = (x,y)\mapsto \gamma(x,y) = \big( x,y, {x^2-y^2}, 2xy, \cdots, \Re(\phi(z)), \Im(\phi(z)) \big)
.\]
In what follows we use $\bbC$ and
$\bbR^2$ interchangeably when there is no danger of confusion.

In analogy with the real case let us define a weight function by
\begin{equation} \label{compwt}
w(z) = |\tau (z)|^{4/(d^2+d)}, \text{ where } \tau(z) = \det (\gamma '(z), \cdots, \gamma^{(d)}(z)) .
\end{equation}
For $\gamma$ given by \eqref{compcurve}, we have $\tau(z) = c_d \, \phi^{(d)}(z)$ with $c_d = 2! \cdots (d-1)!$.
Let $d\mu$ denote the surface measure given by $d\mu(z) = d\mu (\gamma(z)) =dxdy$ for $z=x+iy$. The expression
$w(z) \, d\mu(z) = |\tau (z)|^{4/(d^2+d)} d\mu(z)$
is an analogue of the affine arclength measure for real curves (see \eqref{theweight}; {\it cf.} \cite{DM1,DM2,BOS1}). See Section 2 for the optimality of this choice of measure.

\medskip

When $d=2$, Oberlin \cite{Ob} proved the following
\begin{theorem}{\rm (\cite{Ob}; Theorem 4 and Example 3)} \label{comprestthm-2}
Let $\gamma(z)= (z, \phi(z))$, where $\phi(z)$ is an analytic
function on an open set $D\subset \bbC$. Suppose that $\phi'(z)$
and the map $(z_1, z_2)\mapsto (z_1 - z_2, \phi(z_1) - \phi(z_2))$
both have generic multiplicities at most $N$ on $D$ and $D^2$,
respectively.\footnote{Recall that $F : D \subset \Real^k \rightarrow \Real^k$ is said to have \emph{generic multiplicity $N$} if $\textrm{card}[F^{-1}(y)] \leq N$ for almost all $y \in \Real^k$. Here, $\textrm{card}[E]$ denotes the cardinality of the set $E$.}
Then there is a constant $C_p(N)<\infty$ so that for all $f\in
L^{p}(\bbR^{4})$,
\begin{equation} \label{comp-2}
\Big(\int_{D}
 |\widehat f (\gamma (z) )|^q \,|\phi''(z)|^{2/3} d\mu(z)\Big)^{1/q} \le
C_p (N)\|f\|_{L^{p}(\bbR^4)}
\end{equation}
whenever $1/p + 1/(3q) = 1$, $1\le p<4/3$.
\end{theorem}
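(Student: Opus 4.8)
The plan is to reduce \eqref{comp-2} to a convolution/geometric inequality for the extension operator dual to restriction, and then to exploit the analyticity of $\phi$ together with the generic multiplicity hypotheses to bound the relevant Jacobian. Since $1/p+1/(3q)=1$ with $1\le p<4/3$, the target exponent satisfies $q = 3p'/(something)$; by real interpolation with the trivial $(L^1, L^\infty)$ bound it suffices to prove the endpoint restricted-strong-type estimate at $p=4/3$, $q=4/3$, i.e. the weighted estimate $L^{4/3,1}(\bbR^4)\to L^{4/3,\infty}(|\phi''|^{2/3}d\mu)$, or equivalently (after dualizing) an $L^{4,\infty}\to L^{4}$-type bound for the extension operator $E g(\xi) = \int g(z)\, e^{i\xi\cdot\gamma(z)}\, |\phi''(z)|^{1/2}\, d\mu(z)$. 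The standard device here is to estimate the threefold ``self-convolution'' $\|E g\|_{L^4}^4 = \int (E g)^2\overline{(Eg)^2}$, which by Plancherel reduces to controlling the pushforward under $(z_1,z_2)\mapsto \gamma(z_1)+\gamma(z_2)$ of the measure $|\phi''(z_1)|^{1/2}|\phi''(z_2)|^{1/2}\, d\mu(z_1)\, d\mu(z_2)$, and showing this pushforward is in $L^2$ of $\bbR^4$ with the correct uniform constant.

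Concretely, I would carry out the following steps. First, change variables to $s = z_1 - z_2$, $p = z_1 + z_2$ (all complex), so that $\gamma(z_1) + \gamma(z_2) = (p,\, \tfrac12(p^2+s^2),\, \Phi(p,s))$ where $\Phi(p,s) = \phi(z_1)+\phi(z_2)$; note the first two complex coordinates already determine $p$ and $s^2$, hence $s$ up to sign. Second, for fixed $s$, the map $p\mapsto \Phi(p,s)$ is analytic, and its derivative in $p$ has generic multiplicity controlled by $N$ by the hypothesis on $(z_1,z_2)\mapsto(z_1-z_2,\phi(z_1)-\phi(z_2))$, after reorganizing; this is where one invokes the multiplicity bounds to count preimages. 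Third, I would establish a pointwise Jacobian lower bound for the full map $(z_1,z_2)\mapsto \gamma(z_1)+\gamma(z_2)$: a short computation (using that $\gamma$ is a curve of simple type, so the torsion determinant is a constant times $\phi''$) should give that the $4\times 4$ real Jacobian has modulus comparable to $|z_1 - z_2|^{2}\,|\phi''(z_1)|\,|\phi''(z_2)|$ up to a harmless factor — this is precisely the quantity that the affine-arclength weight $|\phi''|^{2/3}$ is designed to absorb so that the $L^2$ norm of the pushforward is scale-invariant and uniform. Fourth, combining the Jacobian bound with the generic-multiplicity count (which caps how many $(z_1,z_2)$ map to a given point in $\bbR^4$ by a constant depending only on $N$), one obtains the desired $L^2$ bound on the pushforward with constant $C(N)$, and hence \eqref{comp-2} at the endpoint; interpolation finishes the stated range.

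The main obstacle I expect is the interplay between the \emph{two} places where degeneracy can occur: the vanishing of $\phi''(z)$ (which the weight handles, provided one decomposes the domain dyadically according to the size of $|\phi''|$ and sums — using analyticity to control the number and geometry of the sublevel sets $\{|\phi''|\sim 2^{-k}\}$ via the generic-multiplicity hypothesis on $\phi'$) and the diagonal $z_1 = z_2$ where $|z_1-z_2|^2$ vanishes. Near the diagonal the Jacobian degeneracy in $s$ is exactly of the type familiar from the curve $(t,t^2,\phi)$ after ``doubling'', and one has to be careful that the $|z_1-z_2|^2$ factor, which is a two-real-dimensional analogue of the $|t_1-t_2|$ factor in the real case, combines correctly with the weights — in the real $d$-dimensional restriction arguments this is handled by the Dendrinos--Wright type Jacobian factorization, and here the analogue must be checked directly for the complex simple-type curve. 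A secondary technical point is the passage from the naive $L^2$-pushforward/$L^4$-extension estimate to the sharp Lorentz endpoint $L^{4/3,1}\to L^{4/3,\infty}$: this requires either a Lorentz-space refinement of the $TT^*$ argument (replacing $L^4$ by $L^{4,\infty}$ on the extension side and running a restricted-strong-type version) or an appeal to the abstract machinery already developed for the real case, e.g. as in \cite{BOS1,BOS3}, which should transfer with only notational changes once the uniform Jacobian/multiplicity estimates are in hand.
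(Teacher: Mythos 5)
A preliminary remark: the paper does not prove Theorem \ref{comprestthm-2}; it is quoted from Oberlin \cite{Ob} (Theorem 4 together with Example 3), whose argument proceeds by establishing convolution estimates for the affine measure via the generic-multiplicity hypotheses and then converting them into the restriction estimate \eqref{comp-2} on the open range $1\le p<4/3$. Your proposal is in the same general family (a self-convolution/pushforward argument with multiplicity counting), but two of its steps, as written, do not work.

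First, the reduction to the endpoint restricted strong type at $p=q=4/3$ replaces the theorem by a strictly stronger statement, and that extra strength is exactly what is out of reach: in the real analogue $d=2$ the endpoint fails even in the nondegenerate case (\cite{bcss}; see Section 1), the theorem is stated only for $p<4/3$ for this reason, and your own scheme exhibits the obstruction. The doubling map has real Jacobian $|\phi'(z_1)-\phi'(z_2)|^2$, and even if one grants the favorable lower bound $|\phi'(z_1)-\phi'(z_2)|^2\gtrsim |z_1-z_2|^2\,|\phi''(z_1)|\,|\phi''(z_2)|$, the Cauchy--Schwarz/change-of-variables estimate for the $L^2$ norm of the pushforward produces the factor $|z_1-z_2|^{-2}$, which is not locally integrable on $\bbC$ near the diagonal; so the endpoint $L^4$ extension bound cannot be obtained this way, and one must argue strictly inside the range (for instance via an $L^r$, $r<2$, or weak-type bound for the convolution square followed by Hausdorff--Young and interpolation, which is in effect Oberlin's route). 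Treating the endpoint as a ``secondary technical point'' therefore inverts the difficulty: it is the part that is not needed and very likely not true.

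Second, the ``short computation'' you invoke for the Jacobian is false as a pointwise statement: for $\phi(z)=z^3$ and $z_2=-z_1\ne 0$ one has $\phi'(z_1)-\phi'(z_2)=3(z_1-z_2)(z_1+z_2)=0$, while $|z_1-z_2|^2\,|\phi''(z_1)\phi''(z_2)|\ne 0$. Lower bounds of this geometric-mean type hold only after decomposing the domain into boundedly many pieces (Dendrinos--Wright \cite{DeW} in the real polynomial case; Section 4 of the present paper carries this out, with considerable effort, only for complex curves of simple type with $d=3$), and under the hypotheses of Theorem \ref{comprestthm-2} no such pointwise bound is assumed or available --- the generic-multiplicity condition on the difference map is precisely what substitutes for it. Relatedly, the hypotheses control the map $(z_1,z_2)\mapsto(z_1-z_2,\phi(z_1)-\phi(z_2))$, so the $TT^*$ step should be set up with $|Eg|^2$, i.e.\ the pushforward under $\gamma(z_1)-\gamma(z_2)$, rather than with $(Eg)^2$ and the sum map, whose generic multiplicity is not controlled by the assumptions.
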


Here, $\widehat f (\gamma (z) )$ stands for $\widehat f (\gamma (x,y) )$.
See \cite{Ch} for a related result for some 2-dimensional surfaces in $\bbR^{4}$ which are not necessarily given by holomorphic functions, but which satisfy a certain nondegeneracy condition. (See also \cite{DG} for an analogous result for some $k$-dimensional surfaces in $\bbR^d$, where $d=2k$.)

In this paper we obtain some positive results in higher dimensions.
First let us assume that
$\gamma(z)$ is in the form \eqref{compcurve}, where $\phi(z) = z^N$,
$z\in \bbC$, for an integer $N \geq 0$.
%

%
\begin{theorem} \label{compthm}
Given integers $d\ge 3$ and $N\ge 0$, let $\gamma(z)$ be as in \eqref{compcurve}, with $\phi(z)=z^N$. Then there is a constant $C(N)<\infty$ so that for all $f\in L^{p_d
,1}(\bbR^{2d})$,
\begin{equation} \label{affrestr-d}
\Big(\int_{\bbR^2}
 |\widehat f (\gamma(z) )|^{p_d} \,w(z) \, d\mu(z)\Big)^{1/p_d} \le
C(N)\|f\|_{L^{p_d ,1}(\bbR^{2d})}
\end{equation}
where $w(z) = |\phi^{(d)}(z)|^{4/(d^2+d)}$ and $p_d = (d^2+d+2)/(d^2+d)$.

Moreover, there is a constant $C_p (N) <\infty$ such that
\begin{equation} \label{affres-dpq}
\Big(\int_{\bbR^2}
 |\widehat f (\gamma(z) )|^{q} \,w(z) \, d\mu(z)\Big)^{1/q} \le
C_p (N) \|f\|_{L^{p}(\bbR^{2d})}
\end{equation}
whenever $1/p+ 2/[(d^2+d)q] = 1$, $1\le p< p_d$.
\end{theorem}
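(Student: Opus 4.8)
The plan is to reduce the restriction estimate \eqref{affrestr-d} to a sublevel-set / Jacobian estimate for the $d$-fold sum map and then to run the standard offspring-curve machinery, now in the complex (two-dimensional parameter) setting. By duality and the usual $TT^*$ argument, \eqref{affrestr-d} for the extension operator is equivalent to an $L^{p_d'} \to L^{p_d',\infty}$-type bound for the convolution operator $Rf = (f \, w \, d\mu)^\vee * \cdots$, and after the standard multilinear reformulation it suffices to control, for the $d$-linear form built from $\gamma$, the quantity
\[
\int_{(\bbR^2)^d} \prod_{j=1}^d g_j(z_j) \; \delta\Big(\xi - \sum_{j=1}^d \gamma(z_j)\Big) \prod_{j=1}^d w(z_j)\, d\mu(z_j),
\]
which by the coarea formula comes down to a pointwise bound on
\[
J(z_1,\dots,z_d) = \Big| \det \big( \gamma(z_1)-\gamma(z_d), \dots, \gamma(z_{d-1})-\gamma(z_d) \big) \Big|^{-1}
\]
integrated against the weights, where the determinant is the (real) Jacobian of $(z_1,\dots,z_d)\mapsto \sum \gamma(z_j)$ on $\bbR^{2d}$. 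Because $\gamma(z)=(z,z^2,\dots,z^{d-1},z^N)$ is holomorphic, this real $2d\times 2d$ Jacobian factors as $|\det_{\bbC}(\dots)|^2$, i.e. as the squared modulus of the complex $d\times d$ determinant $\prod_{i<j}(z_i-z_j)$ times a Vandermonde-type cofactor coming from $\phi=z^N$; this is exactly why the weight exponent $4/(d^2+d)$ (twice the real-curve exponent) and the dimension doubling $2d$ are the right bookkeeping.

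First I would record the complex Jacobian identity: for $\gamma$ as in \eqref{compcurve} with $\phi(z)=z^N$, the determinant $\det_{\bbC}(\gamma(z_1)-\gamma(z_d),\dots,\gamma(z_{d-1})-\gamma(z_d))$ equals (up to a combinatorial constant) $\big(\prod_{1\le i<j\le d-1}(z_i-z_j)\big)\,\prod_{1\le i\le d-1}(z_i-z_d)\cdot Q_N(z_1,\dots,z_d)$, where $Q_N$ is a Schur-type polynomial that encodes the contribution of the last coordinate $z^N$; in particular it never vanishes identically once $N\ge d$ (for $N<d$ the curve degenerates and $\tau\equiv 0$, so there is nothing to prove), and its zero set is a proper algebraic subvariety. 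Second, I would run the \emph{method of offspring curves} as in \cite{BOS3}: localize to a dyadic piece where $|z_j|$ and the relevant differences are of fixed size, freeze $d-2$ of the variables, and study the resulting two-parameter family of "offspring" curves $z\mapsto \tilde\gamma(z)$ in a lower-dimensional complex space; the point is that the offspring of a complex curve of \emph{simple} type is again of simple type (this is where simplicity is essential and where the non-simple case fails, per the Remark), so the torsion $\tilde\tau$ of the offspring satisfies the required lower bound on the localized region. Third, with the Jacobian identity and the torsion bound in hand, the affine-arclength weight $w(z)\,d\mu(z)$ absorbs precisely the singularities of $J$ on each dyadic block, and one obtains a uniform bound on each block; summing the geometric series over the dyadic decomposition (the gain at each scale is genuinely geometric because of the homogeneity built into $\phi=z^N$, or more generally the degree structure) gives \eqref{affrestr-d} at the endpoint $p=p_d$ with a restricted-strong-type (Lorentz $L^{p_d,1}$) input, exactly as in the real case.

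For the second inequality \eqref{affres-dpq} in the stated range $1\le p<p_d$, I would simply interpolate \eqref{affrestr-d} against the trivial $L^1\to L^\infty$ endpoint: the extension operator maps $L^1(\bbR^{2d})\to L^\infty(w\,d\mu)$ with norm bounded by the total mass, which is finite on compact pieces and, after the dyadic rescaling, under control globally because $\phi=z^N$ is homogeneous; real interpolation between this and the Lorentz-space estimate \eqref{affrestr-d} yields the full open range $1\le p<p_d$ with the scaling relation $1/p+2/[(d^2+d)q]=1$ forced by the homogeneity of $\gamma$ and $w$ under $z\mapsto \lambda z$.

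\textbf{Main obstacle.} The crux is the offspring-curve step: one must verify that freezing variables in the complex curve of simple type produces offspring whose \emph{complex torsion} $\tilde\tau = \det(\tilde\gamma',\dots,\tilde\gamma^{(k)})$ does not degenerate, and one must get a \emph{uniform} (coefficient-independent, and here $N$-independent after rescaling out the homogeneity) lower bound for it on each localized dyadic region. Equivalently, the hard analytic input is a uniform sublevel-set estimate $\meas\{z : |\tilde\tau(z)| \le \lambda\} \lesssim (\lambda/\|\tilde\tau\|_\infty)^{c}$ for these Schur/Vandermonde-type polynomials $Q_N$ and their offspring analogues — a two-dimensional real sublevel estimate for a polynomial whose degree grows with $N$ but whose \emph{structure} is rigid. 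This is exactly the place where one uses that the $z^k$-coordinates form a nondegenerate moment curve and $\phi$ enters only through a single extra coordinate; it is also precisely the estimate that is unavailable for curves of non-simple type, which is why Theorem \ref{compthm} is stated only for $\phi(z)=z^N$ (and, in $d=3$, for general polynomials $\phi$ of bounded degree, by a further argument controlling the interaction of several monomials).
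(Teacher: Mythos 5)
Your overall framing (dualize to the extension operator, prove \eqref{T-Q}, and get \eqref{affres-dpq} by interpolating with the trivial $(L^1,L^\infty)$ bound) matches the paper, but the core of your argument has a genuine gap: the reduction you propose — the $d$-linear form with $\delta(\xi-\sum_j\gamma(z_j))$, the coarea formula, and a pointwise lower bound on the Jacobian of $(z_1,\dots,z_d)\mapsto\sum_j\gamma(z_j)$ — is exactly the convolution/geometric-inequality approach (Drury, Christ, Dendrinos--Wright), and it only yields the restricted ``Christ range'' $1\le p<p_c(d)=\frac{d^2+2d}{d^2+2d-2}$; it cannot produce the endpoint $L^{p_d,1}\to L^{p_d}$ estimate \eqref{affrestr-d}. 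The paper's Remark in Section 1 makes precisely this point. Moreover, your description of the offspring-curve method is not what the method is: one does not freeze $d-2$ variables to obtain a curve in a lower-dimensional space; the offspring curves are averages of translates $\Gamma_b(z)=m^{-1}\sum_i\gamma(z+b_i)$ (and, in the iteration, $z\mapsto\sum_j\Gamma_b(z+h_j)$), living in the same $\bbC^d$, and the endpoint is reached by a bootstrap: one defines $A_\la$ as the supremum of $\la^{2d/Q}\|T_\la^{\Gamma_b}\|_{L^Q(w\,d\mu)\to L^{Q,\infty}}$ over the whole class of offspring curves (uniformly in $m$ and $b$), checks $A_\la<\infty$ a priori, decomposes in the size of the Vandermonde determinant, plays off a weak-type-$Q$ bound (using the torsion bound \eqref{tauzh-d} and $A_\la$ itself) against an $L^2$ bound (Plancherel plus the Jacobian bound \eqref{Jacobian-a-d} and Lemma \ref{VdMbound}), sums the two with an optimal split, and then applies the multilinear interpolation theorem (Theorem \ref{multtrick}) with the dyadic $\ell^1_\alpha$ decomposition of the weight to land in $L^{Q}(w\,d\mu)$ and close the self-improving inequality $A_\la\le C A_\la^{(d-2)/(d^2+2d)}$. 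None of this machinery — which is what actually produces the Lorentz endpoint — appears in your outline; ``summing a geometric series over dyadic blocks'' does not replace it.

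The second, more specifically complex-analytic gap is the quantitative input you defer to ``the offspring of a simple curve is again simple.'' What is actually needed (and is the new content of Lemma \ref{jacest}) is a lower bound for $|\det(\Gamma_b'(z+h_1),\dots,\Gamma_b'(z+h_d))|$ and for the offspring torsion $|\tau(z,h)|=\mathrm{const}\cdot m^{-1}|\sum_{i,j}\phi^{(d)}(z+b_j+h_i)|$ of the form $v(h)\cdot\max_k\big(m^{-1}\sum_j|\phi^{(d)}(z+b_j+h_k)|\big)$, i.e.\ with the \emph{maximum / arithmetic mean} on the right rather than a geometric mean: the paper stresses that without this stronger form one cannot arrange $\delta_2\neq\delta_3$ and Theorem \ref{multtrick} does not apply. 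For $\phi(z)=z^N$ this bound is genuinely delicate because the summands $P_{N-d}(z_{jd},\dots,z_{j1})$ are complex numbers that can cancel; the paper handles this by decomposing $\bbC$ into finitely many narrow sectors $\Delta_\ell$ (where $0<y<\eps x$ after rotation, using the homogeneity of $z^N$) so that real parts dominate and no cancellation occurs, and the bound must hold uniformly in the number $m$ of translates because the class of offspring curves has to be closed under the iteration. Your proposal replaces this with an appeal to a sublevel-set estimate for a Schur-type polynomial $Q_N$, which is neither the statement that is needed nor obviously sufficient; supplying the sector decomposition and the max-form lower bound uniform over $m$ and $b$ is the step your argument is missing.
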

These estimates (as well as those in the following theorem) are expected to be optimal on the Lorentz scale of exponents, in view of the analogous results in the real case (see \cite{BOS1}). However, this seems to be difficult to show in the present context, where the (real) dimension of the surface $k$ is 2. For instance, it is unknown if the estimate \eqref{T-pq} below, which is dual to \eqref{affres-dpq}, fails for $q \le q_d$, $d\ge 3$, even when $f$ is a bump function and we are in the nondegenerate case (with $w=1$). This is related to the unsolved problem of determining the convergence exponent for the multi-dimensional Tarry's problem. In this connection, compare the statements of Theorem 1.3 (for $k=1$) and Theorem 1.9 (for $k\ge 2$) in \cite{ACK}. Notice that no information is available for the divergence of the integral in Theorem 1.9, while Theorem 1.3 gives the complete answer in the 1-dimensional case.

We show the sharpness of the condition $1/p+ 2/[(d^2+d)q] = 1$ at the end of this section (see under the heading ``A homogeneity argument"), and we also prove in section 2 the optimality of the weight function $w(z)$, given after \eqref{affrestr-d}.
\\

When $d=3$, we get an exact analogue of Theorem \ref{polthm}, valid for an {\it arbitrary} polynomial $\phi(z)$ of degree at most $N$.\footnote{It will be interesting if one can show a version of Theorem \ref{comppolthm} for higher dimensions ($d\ge 4$) as well as an analogue of Theorem \ref{powerthm} for complex curves.}
\begin{theorem} \label{comppolthm}
For $d = 3$ and $N\ge 0$, let $\gamma(z)=(z, z^2, \phi(z))$, where $\phi(z)$ is an arbitrary polynomial of degree at most $N$. Then there is a constant $C(N)<\infty$, independent of the coefficients of $\phi(z)$, so that for all $f\in L^{7/6,1}(\bbR^{6})$,
\begin{equation} \label{affrestr-d3}
\Big(\int_{\bbR^2}
 |\widehat f (\gamma(z) )|^{7/6} \,w(z) \, d\mu(z)\Big)^{6/7} \le
C(N)\|f\|_{L^{7/6 ,1}(\bbR^{6})}
\end{equation}
where $w(z) = |\phi''' (z)|^{1/3}$.

Moreover, there is a constant $C_p (N) <\infty$, independent of the coefficients of $\phi(z)$, such that
\begin{equation*} \label{affrestr-dpq}
\Big(\int_{\bbR^2}
 |\widehat f (\gamma(z) )|^{q} \,w(z) \, d\mu(z)\Big)^{1/q} \le
C_p (N) \|f\|_{L^{p}(\bbR^{6})}
\end{equation*}
whenever $1/p+ 1/(6q) = 1$, $1\le p< p_3 = 7/6$.
\end{theorem}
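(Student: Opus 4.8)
The plan is to carry over to the complex setting the ``method of offspring curves'' used in \cite{BOS3} (cf.\ the discussion at the beginning of Section~6 there), exploiting the fact that when $d=3$ the torsion of a simple-type complex curve $\gamma(z)=(z,z^2,\phi(z))$ is the single analytic function $\tau(z)=2\phi'''(z)$. When $\phi$ is a polynomial of degree at most $N$, this torsion is a polynomial of degree at most $N-3$, so its zero set is finite with cardinality bounded in terms of $N$ alone; this is the structural fact that makes it possible to keep every constant independent of the coefficients of $\phi$.

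First I would reduce \eqref{affrestr-d3} to a multilinear inequality. By testing on characteristic functions and interpolating the endpoint with the trivial $L^1\to L^\infty$ estimate, it suffices to prove the restricted strong type bound $\|\widehat{\chi_F}(\gamma(\cdot))\|_{L^{7/6}(w\,d\mu)}\lesssim|F|^{6/7}$ for all measurable $F\subset\bbR^6$. Applying Christ's method of refinements to the oscillatory integral defining $\widehat{\chi_F}$, this reduces to an $L^p$-improving estimate for the trilinear form attached to the map $\Phi\colon(z_1,z_2,z_3)\mapsto\gamma(z_1)+\gamma(z_2)+\gamma(z_3)$ from $(\bbR^2)^3$ to $\bbR^6$, whose kernel involves the Jacobian of $\Phi$. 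Since $\gamma$ is holomorphic, the real Jacobian of $\Phi$ equals $|\det\nolimits_{\bbC}(\gamma'(z_1),\gamma'(z_2),\gamma'(z_3))|^2$, and the complex determinant factors as the Vandermonde product $\prod_{i<j}(z_j-z_i)$ times a divided difference of $\phi'$, i.e.\ (by Hermite--Genocchi) an average of $\phi'''$ over a simplex with vertices $z_1,z_2,z_3$. The presence of the square — absent in the real case — is exactly what accounts for the exponent $4/(d^2+d)$ in the weight \eqref{compwt}, consistently with Oberlin's $d=2$ result \eqref{comp-2}, where it becomes $2/3$.

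Next come the localization and the iteration. Using that $\phi'''$ is a polynomial of bounded degree, one decomposes $\bbC$ into $O_N(1)$ regions on each of which $|\phi'''(z)|$ is comparable either to a constant or to a fixed power of the distance to a single point; on each region, composing with an affine transformation of $\bbR^6$ — which maps simple-type complex curves to simple-type complex curves, leaves the restriction norms invariant, and multiplies $w\,d\mu$ by the appropriate Jacobian factor — together with a complex dilation in $z$, reduces matters to a normalized curve. One then establishes the complex analogues of the two estimates stated at the beginning of Section~6 of \cite{BOS3}: (i) a lower bound for $|\det\nolimits_{\bbC}(\gamma'(z_1),\gamma'(z_2),\gamma'(z_3))|$ in terms of the affine arclength density $w$ of $\gamma$ and the mutual spacing of the $z_j$, which already yields the estimate for $p$ in the reduced (``Christ'') range $1\le p<p_c$ for some $p_c<p_3$; and (ii) a uniform bound for the torsion of the \emph{offspring} curves — the curves $z\mapsto\gamma(z+a)-(\text{affine correction in }\bbR^6)$, which are themselves of simple type with torsion $2\phi'''(z+a)$ — which, fed into the refinement iteration, closes the gap from the reduced range up to the endpoint $p_3=7/6$. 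Since for simple-type curves (ii) is a statement purely about translates and rescalings of the fixed-degree polynomial $\phi'''$, it holds uniformly in the coefficients of $\phi$. Finally one reassembles the $O_N(1)$ localized contributions, undoes the affine and dilation reductions to obtain the restricted strong type bound at $p=7/6$, and interpolates with $L^1\to L^\infty$ (allowing for the change of measure) to get the full range $1\le p<7/6$.

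The main obstacle will be step (ii): running the offspring-torsion estimate in the complex framework while keeping all constants independent of the coefficients of $\phi$. This is the technical heart of \cite{BOS3} in the real case, and here it has to be redone with $\bbC$-valued determinants and $|\cdot|^2$-Jacobians; the reason it remains tractable for $d=3$ — and becomes problematic for $d\ge4$, as noted in the footnote to Theorem~\ref{comppolthm} — is that the torsion is then a single scalar analytic function rather than a determinant assembled from several derivatives, so its zero structure, and the decomposition of $\bbC$ it induces, stay under control with bounds depending only on $N$.
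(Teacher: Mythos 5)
There is a genuine gap at the heart of your plan: the step ``apply Christ's method of refinements to the oscillatory integral defining $\widehat{\chi_F}$'' is not a valid mechanism. The method of refinements is a positivity-based combinatorial scheme for averaging/convolution operators along curves; it cannot be run on the extension operator, where cancellation in the phase is the whole point, and no ``refinement iteration'' closes the gap from Christ's range to the endpoint for restriction. What the paper actually does is a Drury-style bootstrap over offspring curves: one defines the localized operators $T_\la^\Gamma$ in \eqref{Tla}, sets $A_\la$ as in \eqref{Al} (a supremum over all offspring curves $\Gamma$ with normalized coefficients), decomposes the $d$-linear operator $\prod_j T_\la^\Gamma f_j$ along dyadic levels of the Vandermonde determinant, proves an $L^{Q,\infty}$ estimate for each piece using the torsion bound \eqref{tauzh} together with the definition of $A_\la$, proves an $L^2$ estimate via Plancherel and the Jacobian bound \eqref{Jacobian-a}, sums the two families of estimates, and then invokes the multilinear interpolation theorem for $r$-convex spaces (Theorem \ref{multtrick}) to obtain $A_\la \le C\,A_\la^{(d-2)/(d^2+2d)}$, hence a uniform bound. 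None of this machinery---in particular the $A_\la$ bootstrap and the multilinear trick, which is what actually produces the restricted strong type at $p_3=7/6$---appears in your outline, and the route you propose in its place does not produce it.

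A second, related gap concerns the two ``crucial lower bounds.'' Your localization (``$|\phi'''|$ comparable to a constant or a power of the distance to a single point, then affine-normalize'') only controls magnitudes, but in the complex setting the Jacobian is the iterated line integral \eqref{phi-rep} of $\phi'''$, so one must also control the \emph{argument} of the integrand to rule out cancellation; this is precisely why Lemma \ref{J-simplepoly} decomposes $\bbC$ into $O_N(1)$ \emph{convex} pieces $B$ built from gap annuli, dyadic annuli and narrow sectors on which the unit-modulus factor $g(z)$ satisfies the angular pinning \eqref{b0ineq}, and it is this construction that yields the bounds with an arithmetic mean (equivalently a maximum) of $|\phi'''(u_i)|$ rather than a geometric mean---a form that is indispensable for the interpolation step (the choice $\delta_2\neq\delta_3$). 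Likewise, the torsion of an offspring curve is the sum $\sum_{j}\phi'''(z+h_j)$, not a single translate $\phi'''(z+a)$ as you write, and bounding $|\sum_j\phi'''(z+h_j)|$ below by $\max_j|\phi'''(z+h_j)|$ again requires the no-cancellation property on each $B$; asserting that uniformity follows because the statement is ``purely about translates and rescalings of a fixed-degree polynomial'' does not supply this.
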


One can show that the weight functions $w(z)$ in \eqref{affrestr-d} and \eqref{affrestr-d3} are sharp up to a multiplicative constant, as in the real case.
See Proposition \ref{sharpweight} below.

\medskip

{\sl The dual estimate.}
Let $p'$ denote the H\"older conjugate exponent, i.e. $1/p+1/p'=1$. The dual estimate of \eqref{affrestr-d} is the following weak type $(q_d, \,q_d)$
estimate for $q_d = p_d' = (d^2+d+2)/2$:
\begin{equation}\label{T-Q}
\| T f\|_{L^{q_d,\infty}(\bbR^{2d})} \le C(N) \| f\|_{L^{q_d}(w d\mu )}
\end{equation}
where $T$ is given by
%
\begin{equation*}\label{T}
T f(x)=  \int_{\bbR^2} e^{i x\cdot \gamma(z)} f(z) w(z) d\mu(z),
\quad x\in \bbR^{2d} .
\end{equation*}
Recall that the mapping $z\mapsto \gamma(z)$ is regarded as a
2-dimensional surface $(x,y)\mapsto \gamma(x,y)$ in $\bbR^{2d}$. In
particular, $x\cdot \gamma(z)$ denotes the dot product in
$\bbR^{2d}$.

By interpolating \eqref{T-Q} with the $(L^1, L^{\infty})$
estimate it follows that
\begin{equation}\label{T-pq}
\| T f\|_{L^q (\bbR^{2d})} \le C_{q} (N) \| f\|_{L^p(w d\mu )}
\end{equation}
for $1/p+ (d^2+d)/(2q)=1$, $q> q_d = p_d' = (d^2+d+2)/2$.

\medskip
{\sl A homogeneity argument.}
To see the necessity of the condition $1/p+(d^2 + d)/(2q) = 1$ for \eqref{T-pq}
or \eqref{T-Q} to hold, we use the usual homogeneity argument. That is, we take $f= \chi_{B_R}$, where
$B_R = B(0,R)$ is a ball in $\bbR^2$.
We see that
\[ |Tf(x)| \gc R^{\frac{4(N-d)}{(d^2+d)}+2} \chi_{E_R} (x/a)
\]
for some small constant $a>0$, where $E_R = [-R^{-1}, R^{-1}]^2
\times [-R^{-2}, R^{-2}]^2 \times \cdots \times [-R^{-(d-1)}, R^{-(d-1)}]^2 \times [-R^{-N}, R^{-N}]^2$.
Hence, if \eqref{T-Q} or \eqref{T-pq} holds, then we must have
\[ R^{\frac{4(N-d)}{d^2+d}+2} R^{-\frac{2}{q}(\frac{d(d-1)}{2} +N)} \lc R^{(\frac{4(N-d)}{d^2+d}+2) \frac{1}{p} }, \quad \forall R>0 .\]
Thus, it follows that $1/p+(d^2 + d)/(2q) = 1$.

\medskip
{\sl Organization of this paper.} The optimality of the weight function $w(z)$ in Theorem \ref{compthm} or Theorem \ref{comppolthm} is proved in Section 2. Section 3 contains the proof of a lower bound for a Jacobian arising in the proof of Theorem \ref{compthm}. A uniform lower bound for the Jacobian associated to curves of simple type with arbitrary polynomials $\phi(z)$ is proved in Section 4. There is also a short discussion about a sublevel set estimate for the complex Vandermonde determinant at the end of Section 4. In Section 5 we state an interpolation theorem proved in \cite{BOS3}. Theorem \ref{comppolthm} is proved in Section 6.
Finally, in Section 7 we indicate how to modify the latter argument to prove
Theorem \ref{compthm}.


\section{Optimality of the weight function}

Let $d\ge 2$. Here we shall consider the more general mapping $\gamma(z) = (\phi_1(z), \cdots, \phi_d(z))$, where each $\phi_j$ is an analytic function on $\Omega \subset \bbC$. We continue to use the notation $\tau(z) = \det (\gamma '(z), \cdots, \gamma^{(d)}(z))$. The following result is analogous to one found in section 2 of \cite{BOS3}, which in turn is based on an argument in \cite{Obaff}.
\begin{proposition}\label{sharpweight} Assume that for some $p \in (1,\, p_d]$ and $q(p) = 2p'/(d^2+d)$ there is a constant $B$ such that for all $f\in L^{p,1}(\bbR^{2d})$,
\begin{equation} \label{hypo}
\Big(\int_{\Omega}
 |\widehat f (\gamma(z) )|^{q(p)} \, \omega(z) \, d\mu(z)\Big)^{1/{q(p)}} \le
B \|f\|_{L^{p,1}(\bbR^{2d})}
\end{equation}
where $\omega(z)$ is a nonnegative, locally integrable weight function on $\Omega$.
Then there is a constant $C_d$ such that
\begin{equation} \label{omega}
\omega(z) \le C_d \, B^{q(p)} |\tau(z)|^{4\over d^2+d}\quad a.e. ~ z \in \Omega .
\end{equation}
\end{proposition}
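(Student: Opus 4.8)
The plan is to exploit the scaling/translation near a generic point $z_0 \in \Omega$ to convert the global estimate \eqref{hypo} into a purely local comparison between $\omega$ and $|\tau|^{4/(d^2+d)}$, using the fact that near $z_0$ the curve $\gamma$ looks, after an affine change of coordinates in $\bbR^{2d}$, like the model curve $(z, z^2, \dots, z^{d-1}, z^d)$ (up to higher-order terms). First I would fix a Lebesgue point $z_0$ of $\omega$ at which $\tau(z_0) \neq 0$; since $\tau$ is analytic, $\tau$ vanishes only on a discrete set, so it suffices to establish \eqref{omega} at such points. Translating, we may assume $z_0 = 0$. For small $\delta > 0$, I would test \eqref{hypo} against functions $f$ whose Fourier transform is (essentially) adapted to the dual box of the piece of the curve over the disc $\{|z| < \delta\}$: more precisely, take $\widehat f$ to be a smooth bump equal to $1$ on a box $B_\delta$ in $\bbR^{2d}$ whose $j$-th pair of coordinates has length $\sim \delta^{-j}$ for $j=1,\dots,d-1$ and whose last pair has length $\sim \delta^{-d}$ (reflecting $\tau(0)\neq 0$, so that $\phi^{(d)}(0)\neq 0$ and the $d$-th "direction" of the curve over $|z|<\delta$ spans a $\delta^d$-neighborhood). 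Then $f = (\widehat f)^\vee$ satisfies $\|f\|_{L^{p,1}(\bbR^{2d})} \lesssim |B_\delta|^{1/p} = \delta^{-2(1+2+\cdots+(d-1)) - 2d \cdot (1/p)}$, i.e. $\|f\|_{L^{p,1}} \lesssim \delta^{-(d^2+d)/p}$ (using $1+\cdots+(d-1) + d = (d^2+d)/2$).

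The second step is the geometric core: one checks that for $z$ in the disc of radius $c\delta$ about $0$, the point $\gamma(z)$ lies in $\tfrac12 B_\delta$, so that $\widehat f(\gamma(z)) = 1$ there. This is where the structure of a curve of simple type is used: the first $d-1$ components are the monomials $z, z^2, \dots, z^{d-1}$, which over $|z|<c\delta$ stay within boxes of the stated dimensions, and the last component $\phi(z)$, after subtracting its Taylor polynomial of degree $d-1$ at $0$ (an affine modification of $\widehat f$ that does not change the box dimensions), is $O(\delta^d)$. (This normalization — subtracting the jet — is exactly the mechanism that lets $|\tau(z)|$, i.e. $|\phi^{(d)}|$, enter.) Plugging into \eqref{hypo}:
\[
\Big(\int_{|z|<c\delta} \omega(z)\, d\mu(z)\Big)^{1/q(p)} \le B\,\|f\|_{L^{p,1}} \lesssim B\, \delta^{-(d^2+d)/p}.
\]
Raising to the power $q(p) = 2p'/(d^2+d)$ and using $1/p + 1/p' = 1$, the right side becomes $\lesssim B^{q(p)}\, \delta^{-(2/(d^2+d))(p'/p)(d^2+d)} = B^{q(p)} \delta^{-2(p'-1)}$... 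I would be careful here and instead just track exponents directly: $\|f\|_{L^{p,1}}^{q(p)} \lesssim \delta^{-(d^2+d)q(p)/p} = \delta^{-2p'/p} = \delta^{-2(p'-1) - 2}$. Hmm — the clean bookkeeping is that $\int_{|z|<c\delta}\omega \lesssim B^{q(p)} \delta^{-2p'/p}$, and one wants this $\lesssim B^{q(p)} \delta^{2}\cdot \delta^{2d\cdot ?}$; so the exponent one should arrive at, matching $|\tau|^{4/(d^2+d)}$ with $|\tau|\sim$ const, is $\int_{|z|<c\delta}\omega \lesssim B^{q(p)}\,\delta^{2}$ at $p = p_d$, and a $\delta$-power with the right sign for $p<p_d$. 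I would verify that $q(p)$ and the condition $1/p + (d^2+d)/(2q)=1$ (equivalently $q = q(p)$) make the exponent of $\delta$ on the right exactly $2$ when $p=p_d$ and $>2$... actually $\le 2$... for $p<p_d$, which is what is needed for the Lebesgue differentiation step to give the pointwise bound rather than fail.

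The final step is Lebesgue differentiation: dividing by $|\{|z|<c\delta\}| \sim \delta^2$ and letting $\delta \to 0$ gives $\omega(0) \lesssim B^{q(p)}$ at the Lebesgue point $0$. To recover the $|\tau(z_0)|^{4/(d^2+d)}$ factor in full, one runs the same argument not at a point where $\tau(z_0)=1$ but at a general point, rescaling $z \mapsto \lambda z$ (with $\lambda$ chosen depending on $\delta$ and on $|\phi^{(d)}(z_0)|$) — equivalently, choosing the last side length of $B_\delta$ to be $\sim |\phi^{(d)}(z_0)|\,\delta^d$ rather than $\delta^d$. This multiplies $\|f\|_{L^{p,1}}^{q(p)}$ by a factor $\big(|\phi^{(d)}(z_0)|\big)^{2q(p)/p}$... and tracking that factor through to the end produces exactly $|\tau(z_0)|^{4/(d^2+d)}$ on the right-hand side, since $\tau = c_d\,\phi^{(d)}$ and $q(p)=2p'/(d^2+d)$, $1/p+1/p'=1$. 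I expect the main obstacle to be precisely this exponent bookkeeping — making sure the power of $\delta$ comes out with the correct (nonpositive) sign across the whole range $p\in(1,p_d]$ so that the limit exists, and checking that the affine normalization (subtracting the degree-$(d-1)$ Taylor jet of $\phi$, which is a shear in the target $\bbR^{2d}$) genuinely leaves $|\widehat f|$ and the box dimensions unchanged. Once those are pinned down, the argument is a standard adaptation of the Knapp-type construction in section 2 of \cite{BOS3} and \cite{Obaff} to the complex (real 2-dimensional) setting.
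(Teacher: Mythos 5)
Your overall strategy (a Knapp-type test function adapted to the curve piece over a small disc, followed by Lebesgue differentiation) is the same as the paper's, but as written the quantitative core of the argument fails, and you half-notice this yourself. Two errors, which do not cancel: (1) the box on which $\widehat f$ is a unit bump must be the \emph{small} osculating parallelepiped containing $\{\gamma(z_0+z):|z|\le\eps\}$ --- the image under the linear map $\Phi(z_1,\dots,z_d)=\gamma(z_0)+\sum_j \gamma^{(j)}(z_0)z_j/j!$ of the box $\{|x_j|,|y_j|\le 2\eps^j\}$, which has volume $\approx \eps^{d^2+d}|\tau(z_0)|^2$ --- not a box with side lengths $\delta^{-j}$, whose volume blows up as $\delta\to 0$; (2) for $\widehat f$ a height-one bump adapted to a parallelepiped $P$ one has $\|f\|_{L^{p,1}}\approx |P|^{1/p'}$, not $|P|^{1/p}$ (the paper's Gaussian computation gives exactly this). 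With the correct box and the correct norm, the hypothesis \eqref{hypo} yields
\begin{equation*}
\int_{|z|\le\eps}\omega(z_0+z)\,d\mu(z)\ \lesssim\ B^{q(p)}\,|P|^{\,q(p)/p'}\ =\ B^{q(p)}\,|P|^{\,2/(d^2+d)}\ \approx\ B^{q(p)}\,\eps^{2}\,|\tau(z_0)|^{4/(d^2+d)},
\end{equation*}
and the key point is the identity $q(p)/p'=2/(d^2+d)$, valid for every $p\in(1,p_d]$, which is precisely what makes the $\eps$-power equal to $2$ uniformly in $p$ and produces the exponent $4/(d^2+d)$ on $|\tau|$. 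With your exponents the bound reads $\int_{|z|<c\delta}\omega\lesssim B^{q(p)}\delta^{-2p'/p}$, whose power of $\delta$ has the wrong sign (so dividing by $\delta^2$ and letting $\delta\to 0$ gives nothing), and the torsion factor would enter with power $4p'/(p(d^2+d))$ rather than $4/(d^2+d)$, which is never the right exponent in the allowed range of $p$. This is not mere bookkeeping to be ``pinned down later''; it is the mechanism of the proof.

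Two smaller points. The proposition is stated for a general analytic $\gamma(z)=(\phi_1(z),\dots,\phi_d(z))$, so you cannot lean on the simple-type structure (monomial first components plus subtraction of the degree-$(d-1)$ jet of $\phi$); the paper avoids this by using the osculating parallelepiped $\Phi(E)$ built from $\gamma'(z_0),\dots,\gamma^{(d)}(z_0)$, which needs no jet subtraction and computes the volume via the real Jacobian $J_\bbR\Phi=|\det J_\bbC\Phi|^2$. Also, ``$\tau$ vanishes on a discrete set'' requires $\tau\not\equiv 0$; the case $\tau\equiv 0$ must be treated separately (a variant of the same construction shows $\int_{|z|\le\eps}\omega(z_0+z)\,d\mu=o(\eps^2)$, hence $\omega=0$ a.e.), otherwise the conclusion \eqref{omega} is not established in that case.
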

When $\gamma(z)$ is as in \eqref{compcurve}, then we have $\tau(z) = c_d \, \phi^{(d)}(z)$, so that the last inequality becomes $\omega(z) \le C_d \, B^{q(p)} |\phi^{(d)}(z)|^{4/(d^2+d)}$, as we wanted to show.

\begin{proof}
Let $P = A Q + b$ be a parallelepiped in $\bbR^{2d}$, where $Q=[-{1\over 2}, {1\over 2}]^{2d}$,
$b\in \bbR^{2d}$ and $A$ is an invertible linear transformation on $\bbR^{2d}$.
Take $\widehat{f}(\xi) = \exp (- \pi|A^{-1} (\xi - b)|^2 )$. Then $|\widehat{f}(\xi)| \ge c_0 >0$ for $\xi \in P$, and $f(x) = e^{2\pi i b\cdot x} |\det(A)| \cdot \exp (- \pi |A^t \, x|^2)$. Since $|P| = |\det(A)|$, we have $\| f\|_{p,1} \approx |P|^{1/p'}$. Hence,
\eqref{hypo} implies that
\begin{equation} \label{paraest}
\int_{\bbR^2}
 \chi_P (\gamma(z) )\, \omega(z) \, d\mu(z) \le C(d) \,
B^{q(p)} |P|^{2/(d^2+d)} .
\end{equation}

Since each $\phi_j (z)$ is analytic on $\Omega$, so is $\tau(z)$.
Thus, we may assume $\tau(z)$ has only isolated zeros. So, it is enough to show \eqref{omega} at points where $\tau(z) \not= 0$. (Otherwise, $\tau(z)$ is identically zero. We comment on this case at the end of this section.)

Fix $a\in \Omega$. We have
\begin{equation} \label{taylorexp}
\gamma(a+z) = \gamma(a) + \sum_{j=1}^d {z^j\over j!} \gamma^{(j)}(a) + O(|z|^{d+1})
\end{equation}
for $z$ near the origin. Now consider the linear mapping
\begin{equation} \label{mapping}
(z_1, \cdots, z_d) \mapsto \Phi(z_1, \cdots, z_d) = \gamma(a) + \sum_{j=1}^d {z_j\over j!} \gamma^{(j)}(a) .
\end{equation}
Write $z_j = x_j + i y_j$. For $\eps >0$, let $E = \{ (z_1, \cdots, z_d):~ |x_j| \le 2 \, \eps^j,~  |y_j| \le 2 \, \eps^j, ~ 1\le j\le d \}$ denote a rectangular box in $\bbR^{2d}$. The image $P_1$ of $E$ under this mapping is a parallelepiped in $\bbR^{2d}$. Its volume $|P_1|$
is given by
\begin{align*}
|P_1| &= 2^{2d} \, \eps^{d^2+d} \cdot J_{\bbR}\Phi = 2^{2d} \, \eps^{d^2+d} \cdot |\det J_{\bbC}\Phi|^2\\
&= 2^{2d} \,\eps^{d^2+d} \cdot |(2! \cdots d!)^{-1} \det (\gamma'(a), \cdots, \gamma^{(d)}(a) )|^2 \\
&= 2^{2d} (2! \cdots d!)^{-2}\,
\eps^{d^2+d} \cdot |\tau (a)|^2 .
\end{align*}
We used here the fact that the Jacobian of \eqref{mapping} as a real mapping is given by $J_\bbR \Phi = |\det J_{\bbC} \Phi|^2$, where $J_{\bbC}\Phi$ is the holomorphic Jacobian matrix of the mapping \eqref{mapping}. This is a consequence of Proposition 1.4.10 on p. 51 in \cite{Kr}.

If $\tau(a) \not= 0$, and if $\eps = \eps(a) >0$ is sufficiently small, then we have $\gamma(a+z) \in P_1$ when $|z| \le \eps$. In fact, since $\gamma'(a), \cdots, \gamma^{(d)}(a)$ span $\bbC^d$, it follows from \eqref{taylorexp} that
\begin{equation} \label{taylorexp-1}
\gamma(a+z) = \gamma(a) + \sum_{j=1}^d {z^j + z^{d} g_j (z,a)\over j!} \gamma^{(j)}(a)
\end{equation}
for some functions $g_j(z,a)$ such that $g_j(z,a) \rightarrow 0$ as $z\rightarrow 0$ for $j=1, 2, \cdots, d$.

Therefore, it follows from \eqref{paraest} that
\[ \limsup_{\eps \to 0} {1\over \pi \eps^{2}} \int_{|z| \le \eps} \omega(a+z) \, d\mu (z) \le C_d \, B^{q(p)} |\tau(a)|^{4/(d^2+d)} .
\]
%
So the conclusion \eqref{omega} follows by the Lebesgue differentiation theorem.

On the other hand, when $\tau(a) = 0$, a slight modification of the above argument shows that
\[ \int_{|z| \le \eps} \omega(a+z) \, d\mu (z) = o(\eps^2), \text{ as } \eps \rightarrow 0 .
\]
Thus, when $\tau(z) \equiv 0$, we may conclude that $\omega(z)$ is zero almost everywhere. (See section 2 of \cite{BOS3} for more details.)
\end{proof}

\section{A lower bound for the Jacobian}
%

Let us first set up the notation.
\begin{defn}\label{PN}
Let $N$ be a nonnegative integer and let $z_1, \cdots , z_d$ be complex numbers. Let $P_N$ denote a homogeneous monic polynomial of degree N in $z_1, \cdots , z_d$, given by
\[  P_N(z_1, \cdots, z_d) =
\sum_{\alpha_1 + \cdots +\alpha_{d} = N} z_1^{\alpha_1} \cdots z_d^{\alpha_d} .\]
Here, $\alpha_1, \cdots, \alpha_{d}$ are nonnegative integers.
\end{defn}

Thus, $P_N$ is a {\it symmetric} polynomial. We have the following properties of $P_N$:
\begin{lem}\label{rmk}
Let $d\ge 2$ and $N\ge 1$. Then

\noindent
$(i)$ $P_0 (z_d, \cdots, z_1) = 1$;

\noindent
$(ii)$ $P_N(z_3, z_1) - P_N(z_2, z_1) = (z_3 - z_2)P_{N-1}(z_3, z_2, z_1)$;

\noindent
$(iii)$ $P_N(z_d, z_{d-1}, \cdots, z_1) = P_N(z_d, \cdots, z_2) + P_{N-1}(z_d, \cdots , z_2) z_1 + \cdots +$
\\
$\text{ }  + P_1(z_d, \cdots, z_2) z_1^{N-1} + z_1^N$.

\noindent
$(iv)$ Moreover, we have
%
\begin{align*}
P_N(z_{d+1}, \, & z_{d-1}, \cdots, z_1) - P_N (z_d, z_{d-1}, \cdots, z_1) = \\
&= (z_{d+1} - z_d) \, P_{N-1}(z_{d+1}, \cdots,  z_1) .
\end{align*}
\end{lem}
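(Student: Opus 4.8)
The plan is to prove all four identities directly from the definition of $P_N$ as the sum of all monomials of degree $N$ in the given variables, treating $(i)$--$(iii)$ as warm-ups and $(iv)$ as the substantive claim. For $(i)$, the only degree-$0$ monomial is the empty product, which equals $1$. For $(ii)$ and $(iii)$ we group the monomials in $P_N$ according to the exponent of one distinguished variable. Concretely, for $(iii)$ I would split $P_N(z_d,\dots,z_1)$ by the power $z_1^k$ that appears, $0\le k\le N$: the monomials containing exactly $z_1^k$ are $z_1^k$ times an arbitrary degree-$(N-k)$ monomial in $z_d,\dots,z_2$, which is exactly $z_1^k P_{N-k}(z_d,\dots,z_2)$; summing over $k$ gives $(iii)$ (with the $k=N$ term being $z_1^N$ since $P_0=1$). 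Identity $(ii)$ is the two-variable telescoping consequence: writing $P_N(z_3,z_1)-P_N(z_2,z_1)=\sum_{k=0}^{N}(z_3^{k}-z_2^{k})z_1^{N-k}$ — wait, more cleanly, $P_N(z_3,z_1)=\sum_{i+j=N}z_3^i z_1^j$, so $P_N(z_3,z_1)-P_N(z_2,z_1)=\sum_{i+j=N}(z_3^i-z_2^i)z_1^j$, and using $z_3^i-z_2^i=(z_3-z_2)\sum_{a+b=i-1}z_3^a z_2^b$ one collects the factor $(z_3-z_2)$ and recognizes the remaining sum as $P_{N-1}(z_3,z_2,z_1)$ after reindexing.

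For $(iv)$, which is the real point, the cleanest route is to reduce it to $(ii)$ by the ``peeling off $z_1$'' structure of $(iii)$. Apply $(iii)$ to both sides, but with the roles arranged so that the variables $z_{d-1},\dots,z_1$ are the ones being peeled while $z_{d+1}$ (resp. $z_d$) plays the role of the ``extra'' first slot. More precisely, I would induct on the number of ``common'' variables. The base case has just the two variables $z_{d+1}$ (or $z_d$) and $z_1$: then $(iv)$ reads $P_N(z_{d+1},z_1)-P_N(z_d,z_1)=(z_{d+1}-z_d)P_{N-1}(z_{d+1},z_d,z_1)$, which is exactly $(ii)$ with a relabeling. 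For the inductive step, peel $z_1$ off every $P$ in sight using $(iii)$: writing $A=(z_{d+1},z_{d-1},\dots,z_2)$ and $B=(z_d,z_{d-1},\dots,z_2)$ for the variable-lists with $z_1$ removed, we get $P_N(z_{d+1},z_{d-1},\dots,z_1)-P_N(z_d,z_{d-1},\dots,z_1)=\sum_{k=0}^{N}\big(P_{N-k}(A)-P_{N-k}(B)\big)z_1^{k}$, and by the induction hypothesis each $P_{N-k}(A)-P_{N-k}(B)$ equals $(z_{d+1}-z_d)P_{N-k-1}(z_{d+1},z_d,z_{d-1},\dots,z_2)$; factoring out $(z_{d+1}-z_d)$ and applying $(iii)$ once more (in reverse) to the sum $\sum_k P_{N-k-1}(z_{d+1},z_d,z_{d-1},\dots,z_2)z_1^k$ reassembles $P_{N-1}(z_{d+1},z_d,z_{d-1},\dots,z_2,z_1)=P_{N-1}(z_{d+1},\dots,z_1)$, which is the right-hand side of $(iv)$.

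An alternative and perhaps slicker approach avoids induction entirely via the generating-function identity $\sum_{N\ge 0}P_N(w_1,\dots,w_m)\,t^N=\prod_{j=1}^{m}\frac{1}{1-w_j t}$, valid as formal power series. From this, $P_N(z_{d+1},z_{d-1},\dots,z_1)$ and $P_N(z_d,z_{d-1},\dots,z_1)$ are the $t^N$-coefficients of $\frac{1}{1-z_{d+1}t}\prod_{j\le d-1}\frac{1}{1-z_jt}$ and $\frac{1}{1-z_dt}\prod_{j\le d-1}\frac{1}{1-z_jt}$ respectively; their difference is the $t^N$-coefficient of $\Big(\frac{1}{1-z_{d+1}t}-\frac{1}{1-z_dt}\Big)\prod_{j\le d-1}\frac{1}{1-z_jt}=\frac{(z_{d+1}-z_d)t}{(1-z_{d+1}t)(1-z_dt)}\prod_{j\le d-1}\frac{1}{1-z_jt}=(z_{d+1}-z_d)t\cdot\sum_{M\ge0}P_M(z_{d+1},z_d,z_{d-1},\dots,z_1)t^M$, whose $t^N$-coefficient is $(z_{d+1}-z_d)P_{N-1}(z_{d+1},z_d,z_{d-1},\dots,z_1)$, exactly $(iv)$. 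This also yields $(ii)$ and $(iii)$ as special cases. I do not anticipate a serious obstacle here — the whole lemma is elementary combinatorial algebra — but the one point requiring a little care is bookkeeping the variable lists and the reindexing in the inductive step of $(iv)$, and making sure the $N=0$ or ``$P_{-1}=0$'' edge cases (when $N=1$, so $P_{N-1}=P_0=1$) are consistent with the stated hypothesis $N\ge 1$.
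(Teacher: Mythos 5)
Your proof is correct and follows essentially the same route as the paper: $(i)$--$(iii)$ directly from the definition, and $(iv)$ by induction on $d$ with $(ii)$ as the base case, expanding both sides in powers of $z_1$ via $(iii)$, applying the induction hypothesis to each coefficient difference, factoring out $(z_{d+1}-z_d)$, and reassembling with $(iii)$ --- exactly the paper's argument. The generating-function identity $\sum_{N\ge 0}P_N(w_1,\dots,w_m)t^N=\prod_j (1-w_jt)^{-1}$ you offer as an alternative is a slicker, induction-free derivation not used in the paper, and it is also valid.
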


\begin{proof}
The properties $(i)$-$(iii)$ are straightforward. To see that $(iv)$ holds, we use induction on $d$. First, $(ii)$ gives the case $d=2$.
Now suppose that $(iv)$ holds with $d$ replaced by $d-1$. That is, we assume
\[ P_N(z_d, z_{d-2}, \cdots, z_1) - P_N(z_{d-1}, z_{d-2},  \cdots, z_1) = (z_d - z_{d-1})P_{N-1}(z_d, \cdots , z_1)
\]
holds for some $d\ge 3$ and for $N\ge 1$. It follows from $(iii)$ and this induction hypothesis that
\begin{align*}
P_N ( &z_{d+1}, z_{d-1}, \cdots, z_1) - P_N(z_d, z_{d-1}, \cdots, z_1) \\
&= P_N(z_{d+1}, z_{d-1}, \cdots, z_2) + P_{N-1}( z_{d+1}, z_{d-1}, \cdots, z_2)z_1 + \cdots + z_1^N \\
&\qquad - [P_{N}(z_d, z_{d-1}, \cdots, z_2) + P_{N-1}( z_d, z_{d-1}, \cdots, z_2)z_1 + \cdots + z_1^N] \\
&= (z_{d+1} - z_d) \big[ P_{N-1}(z_{d+1}, z_{d}, \cdots, z_2) + P_{N-2}( z_{d+1}, z_{d}, \cdots, z_2)z_1  \\
 &\qquad \quad+ \cdots + P_1(z_{d+1}, z_{d}, \cdots, z_2)z_1^{N-2} + z_1^{N-1} \big] \\
&= (z_{d+1} - z_d)\, P_{N-1}(z_{d+1}, \cdots, z_1)
\end{align*}
which is the case $d$ of $(iv)$. Hence, $(iv)$ holds for all $d\ge 2$ and $N\ge 1$.
%
%
\end{proof}

We now turn to the proof of a lower bound for the Jacobian of a transformation that arises in the proof of Theorem \ref{compthm}.
Let $J(z, h_2, \cdots, h_d)$ denote the determinant of the holomorphic Jacobian matrix of the mapping $(z, h_2, \cdots , h_d) \mapsto \Gamma(z,h_2, \cdots,h_d) = \sum_{k=1}^d \Gamma_b(z+h_k)$ with $h_1=0$. Here, $\Gamma_b (z) = m^{-1 }\sum_{j=1}^m \gamma(z+
b_j)$, where $m\in \bbN$, and $b= (b_1, \cdots, b_m) \in \bbC^{m}$, with $b_1=0$.

\begin{lem}\label{jacest}
Let $\gamma(z)$ be given by \eqref{compcurve} with $\phi(z) = z^N$ for an integer $N \geq d$ with $d \ge 2$. Set $J(z, h_2, \cdots, h_d) = \det (\Gamma'_b(z+h_1), \cdots, \Gamma'_b(z+h_d))$, where $z, h_2, \cdots, h_d \in \mathbb{C}$ and $h_1 = 0$. Then $\mathbb{C}$ is the union of $C(d,N)$ sectors $\Delta_\ell$ centered at the origin such that for each $1 \leq \ell \leq C(d,N)$, and for each integer $m \geq 1$, we have
\begin{align}\label{J-max}  |J(z, h_2, \cdots, h_d)| \geq c(d,N) \, v(h)\, \max \Big\{ \frac{1}{m} \sum_{j=1}^m |\phi^{(d)}(z+b_j+h_k)| : 1 \leq k \leq d \Big\}
\end{align}
where $z+b_j+h_k \in \Delta_l$, and $v(h_2, \cdots, h_d) =|V(z_1, \cdots, z_d)|$ is the absolute value of the complex Vandermonde determinant, where we put $h_k = z_k-z_1 = z_k- z$. Here, $C(d,N)$ and $c(d,N)$ are positive constants depending only on $d$ and $N$.
\end{lem}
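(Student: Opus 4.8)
The plan is to compute the holomorphic Jacobian determinant $J(z,h_2,\dots,h_d)$ explicitly and to extract a factor that is the complex Vandermonde determinant in the variables $z_k = z+h_k$, times a symmetric-function factor built from the offspring curve $\Gamma_b$. Since $\phi(z)=z^N$ with $N\ge d$, we have $\gamma^{(k)}(z) = (k! e_k^{(d-1)}, \dots)$ with last coordinate a multiple of $z^{N-k}$; after the averaging $\Gamma_b(z) = m^{-1}\sum_j \gamma(z+b_j)$, the entries of the matrix $(\Gamma_b'(z+h_1),\dots,\Gamma_b'(z+h_d))$ are polynomials in the $z_k$. The first $d-1$ coordinates of $\Gamma_b'$ are $1, 2(z+\text{const}),\dots$, i.e.\ affine combinations of powers $1,z,\dots,z^{d-2}$ with coefficients independent of $k$; by column operations these contribute exactly the Vandermonde determinant $V(z_1,\dots,z_d)$ up to a nonzero constant, leaving the last row to carry the information about $\phi$. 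So I expect
\[
J(z,h_2,\dots,h_d) = c_d' \, V(z_1,\dots,z_d) \cdot \Big(\text{divided difference of } m^{-1}\!\sum_j (z+b_j)^N \text{ at } z_1,\dots,z_d\Big),
\]
i.e.\ $|J| = c_d\, v(h)\cdot | m^{-1}\sum_{j=1}^m P_{N-d}(z_1+b_j,\dots,z_d+b_j)|$, where $P_{N-d}$ is the complete homogeneous symmetric polynomial of Definition \ref{PN}, using the standard fact that the $d$-th divided difference of $t\mapsto t^N$ at $z_1,\dots,z_d$ equals $P_{N-d}(z_1,\dots,z_d)$ (this is where Lemma \ref{rmk}, especially (iii) and (iv), is used to manage the recursion).

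Once this factorization is in hand, the task reduces to a lower bound on $|m^{-1}\sum_j P_{N-d}(z_1+b_j,\dots,z_d+b_j)|$ by a constant times $\max_k \{ m^{-1}\sum_j |\phi^{(d)}(z+b_j+h_k)|\}$, and $\phi^{(d)}(w) = \frac{N!}{(N-d)!} w^{N-d}$, so up to the constant $\frac{N!}{(N-d)!}$ the right side is $\max_k\{ m^{-1}\sum_j |z_k+b_j|^{N-d}\}$. The key structural point is that $P_{N-d}$ is a sum of monomials all of whose coefficients are positive (indeed equal to $1$); hence if all the arguments $w_1,\dots,w_d$ lie in a single sector $\Delta_\ell$ of small aperture centered at $0$, then every monomial $w_1^{\alpha_1}\cdots w_d^{\alpha_d}$ has argument within a controlled range, so there is no cancellation among the $\binom{N-d+d-1}{d-1}$ terms and
\[
|P_{N-d}(w_1,\dots,w_d)| \gc \sum_{\alpha_1+\cdots+\alpha_d=N-d} |w_1|^{\alpha_1}\cdots|w_d|^{\alpha_d} \gc \max_k |w_k|^{N-d}.
\]
To pass from a fixed tuple $(w_1,\dots,w_d)$ to the averaged quantity $m^{-1}\sum_j P_{N-d}(z_1+b_j,\dots,z_d+b_j)$ I use that, once $z+b_j+h_k\in\Delta_\ell$ for all relevant $j,k$, the hypothesis in the statement, each summand $P_{N-d}(z_1+b_j,\dots,z_d+b_j)$ has argument in the same narrow cone, so the $m$ summands add without cancellation and $|m^{-1}\sum_j P_{N-d}(\cdots)| \gc m^{-1}\sum_j |P_{N-d}(\cdots)| \gc m^{-1}\sum_j \max_k |z_k+b_j|^{N-d} \ge \max_k m^{-1}\sum_j |z_k+b_j|^{N-d}$, which is the asserted bound.

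It remains to fix the sectors $\Delta_\ell$. Choosing an aperture $\theta_0 = \theta_0(d,N)$ small enough that any $d$-tuple of complex numbers lying in a sector of aperture $\theta_0$ about $0$ has all products of the form $w_1^{\alpha_1}\cdots w_d^{\alpha_d}$ (with $\sum\alpha_i=N-d$) confined to a half-plane, and covering $\bbC$ by $C(d,N) = \lceil 2\pi/\theta_0\rceil$ such sectors, gives the claim. One subtlety to record: the constant $c(d,N)$ depends on $\theta_0$ and hence on $d,N$ only, as required; and the factor $v(h)=|V(z_1,\dots,z_d)|$ is exactly what appears from the column reduction, so no extra work is needed there. \textbf{The main obstacle} I anticipate is the bookkeeping in the first step — verifying cleanly that after the column operations the Jacobian factors \emph{exactly} as (Vandermonde) $\times$ (divided difference of the averaged $N$-th power), including tracking the precise constant and checking that the $b_j$-averaging commutes with the divided-difference identity; this is a determinant/symmetric-function computation where Lemma \ref{rmk}(iii)--(iv) does the real work, and the no-cancellation argument afterwards is comparatively routine.
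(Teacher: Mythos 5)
Your proposal is correct and takes essentially the same route as the paper: the exact factorization $|J| = \frac{(d-1)!N}{m}\, v(h)\,\bigl|\sum_{j=1}^m P_{N-d}(z_1+b_j,\cdots,z_d+b_j)\bigr|$ (which the paper obtains by explicit column operations using Lemma \ref{rmk}, equivalent to your divided-difference identity --- note only that the relevant function is the last entry of $\Gamma_b'$, so it is the $(d-1)$-st order divided difference of $t\mapsto t^{N-1}$, not a ``$d$-th'' divided difference of $t^N$), followed by a decomposition of $\bbC$ into $C(d,N)$ sectors of small aperture so that the positive-coefficient monomials of $P_{N-d}$ and the $m$ summands add without cancellation, and then $|z_k+b_j|^{N-d}\approx|\phi^{(d)}(z+b_j+h_k)|$. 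The paper implements the no-cancellation step by rotating to the sector $\{0<y<\eps x\}$ and bounding real parts with an $O(\eps)$ error, which is the same idea as your angular-confinement argument.
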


\begin{proof}
Let us write $z_{jk} = z+b_j + h_k$. Recall that $h_1=0$, and so $z_{j1} = z+b_{j}$. If we abbreviate $\sum_{j=1}^m$ as $\sum$, we get
\begin{align*}
&J(z, h_2, h_3, \cdots, h_d) = \det (\Gamma'_b(z+h_1), \cdots, \Gamma'_b(z+h_d)) \\
& = \frac{(d-1)!N}{m^{d-1}}
\left|
\begin{array}{cccc}
 1 & 1  & \cdots & 1\\
\sum (\zbh1)  & \sum (z_{j2})  & \cdots & \sum (z_{jd})\\
\sum(\zbh1)^2  & \sum(z_{j2})^2  & \cdots & \sum (z_{jd})^2\\
\vdots & \vdots & \ddots & \vdots \\
\sum(\zbh1)^{d-2}  & \sum(z_{j2})^{d-2}  & \cdots & \sum (z_{jd})^{d-2}\\
\sum(\zbh1)^{N-1}  & \sum(z_{j2})^{N-1}  & \cdots & \sum (z_{jd})^{N-1}\\
\end{array}
\right|
\end{align*}
\begin{align*}
 = \frac{(d-1)!N}{m^{d-1}}
\left|
\begin{array}{cccc}
1  &  0  & \cdots & 0 \\
\sum (\zbh1) & mh_2 & \cdots & m h_d \\
\sum(\zbh1)^2 & h_2 \sum P_1(\zbh{2}, \zbh1) & \cdots & h_d \sum P_1(\zbh{d}, \zbh1) \\
\vdots & \vdots  & \ddots & \vdots \\
\sum (\zbh1)^{d-2} & h_2 \sum P_{d-3}(\zbh{2}, \zbh1) & \cdots & h_d \sum P_{d-3}(\zbh{d},\zbh1) \\
\sum(\zbh1)^{N-1} & h_2 \sum P_{N-2}(\zbh{2}, \zbh1) & \cdots & h_d \sum P_{N-2} (\zbh{d}, \zbh1)
\end{array}
\right| .
\end{align*}
%
Note that the value of this determinant equals
\begin{align*}
&\left|
\begin{array}{cccc}
m h_2  &  m h_3  & \cdots & m h_d \\
h_2 \sum P_1(\zbh{2},\zbh1) & h_3 \sum P_1(\zbh{3},\zbh1)  & \cdots & h_d \sum P_1(\zbh{d},\zbh1) \\
h_2 \sum P_2(\zbh{2},\zbh1) &  h_3 \sum P_2(\zbh{3}, \zbh1) & \cdots & h_d \sum P_2(\zbh{d},\zbh1) \\
\vdots & \vdots  & \ddots & \vdots \\
h_2 \sum P_{d-3}(\zbh{2},\zbh1) & h_3 \sum P_{d-3}(\zbh{3}, \zbh1) & \cdots & h_d \sum P_{d-3}(\zbh{d},\zbh1) \\
h_2 \sum P_{N-2}(\zbh{2},\zbh1) & h_3 \sum P_{N-2}(\zbh{3}, \zbh1) & \cdots &  h_d \sum P_{N-2} (\zbh{d}, \zbh1)
\end{array}
\right|
\end{align*}
\begin{align*}
= &\,m h_2 h_3 \cdots h_d \times \\
& \times\left|
\begin{array}{ccc}
1  &  0  & \cdots \\ 
\sum P_1(z_{j2} ,\zbh1) & m(h_3 - h_2)  & \cdots \\ 
\sum P_2(z_{j2},\zbh1) &  (h_3 - h_2) \sum P_1(z_{j3}, z_{j2}, \zbh1) & \cdots \\ 
\vdots & \vdots  & \ddots \\ 
\sum P_{d-3}(z_{j2},\zbh1) & (h_3 - h_2)\sum P_{d-4}(z_{j3},z_{j2}, \zbh1) & \cdots \\
\sum P_{N-2}(z_{j2},\zbh1) & (h_3 - h_2)\sum P_{N-3}(z_{j3},z_{j2}, \zbh1) & \cdots \\
\end{array} \right. \\
& \qquad \qquad \qquad \qquad \qquad \qquad \left. \begin{array}{cc}
\cdots & 0 \\
\cdots & m(h_d - h_2) \\
\cdots & (h_d - h_2)\sum P_1(z_{jd},z_{j2},\zbh1) \\
\ddots & \vdots \\
\cdots & (h_d - h_2)\sum P_{d-4}(z_{jd},z_{j2}, \zbh1) \\
\cdots & (h_d - h_2)\sum P_{N-3} (z_{jd}, z_{j2}, \zbh1)
\end{array} \right|
\end{align*}
by the properties of $P_N$ stated in Lemma \ref{rmk}.

Continuing in this way, we see that
\begin{align*}
& J(z, h_2, \cdots, h_d) \\
& = (d-1)!N m^{-1} (h_2 \cdots h_d)\cdots(h_{d-1} - h_{d-2})(h_d - h_{d-2}) \times \\
&\left|
\begin{array}{cc}
1  & 1 \\
\sum P_{N-d+1} (z_{j, d-1}, z_{j, d-2} , \cdots , \z_{j1}) & \sum P_{N-d+1} (z_{j,d} , z_{j, d-2} , \cdots , \z_{j1})
\end{array}\right| \\
&= \frac{(d-1)!N}{m} \prod_{1 \leq k < l \leq d} (h_l - h_k) \sum_{j=1}^m P_{N-d} (z_{j,d} , z_{j,d-1} , \cdots , \z_{j1}).
\end{align*}
Hence, if we write $L_j$ for $P_{N-d}(z_{jd}, \cdots , \z_{j1})$, we obtain
\[  |J(z, h_2, \cdots , h_d) |  \geq \frac{(d-1)!N}{m} v(h)\cdot \Big|\sum_{j=1}^m L_j \Big|.
\]

By rotation, it suffices to consider the case $\Delta_\ell = \Delta = \{ z= x+i y \in \mathbb{C} : 0 < y < \varepsilon x \}$ with some small $\varepsilon = \varepsilon(d,N) > 0$. (Indeed, we may express the elements of $\Delta_\ell$ in the form $z' = a z$, for $z\in \Delta$ and some fixed complex number $a$ with $|a|=1$. By homogeneity, the powers of $a$ may be factored out of each row of the Jacobian.)

Recalling that $z_{jk} = z+b_j + h_k$, let us write $x_{jk} = \textrm{Re} (z_{jk})$ and $y_{jk} = \textrm{Im} (z_{jk})$. Then for each $j$, we have the lower bound
\[  |\textrm{Re} [L_j]| \geq P_{N-d}(x_{j1}, x_{j2} , \cdots, x_{jd}) + E_j  \]
where $E_j$ is a sum of $C(d,N)$ terms similar to the expression preceding it but with one or more factors $x_{jk}$ replaced by $c_{jk}\, y_{jk}$. Here, $|c_{jk}| \le C'(d,N)$. Recall that $0 < y_{jk} < \varepsilon x_{jk}$. Hence the last expression is bounded below by
\[  \frac{1}{2} P_{N-d} (x_{j1}, x_{j2} , \cdots, x_{jd}) \gtrsim \sum_{k=1}^d x_{jk}^{N-d} \approx \sum_{k=1}^d |\phi^{(d)}(z+b_j + h_k)|
\]
provided that $\varepsilon = \varepsilon(d,N) >0$ is chosen sufficiently small. This implies that
\[   | J(z, h_2, \cdots, h_d) |   \geq c(d,N) v(h) \frac{1}{m} \sum_{k=1}^d \sum_{j=1}^m |\phi^{(d)} (z + b_j + h_k)| \]
whenever $z+b_j + h_k \in \Delta$. This finishes the proof.
\end{proof}


\section{Jacobian bound for polynomial curves of simple type in $\bbC^3$}

A version of the following lemma may be found in \cite{FW} (Lemma 3.1), where it is stated and proved for polynomials of a real variable. (See also \cite{CRW, CRW2}.) But the same proof works for polynomials of a complex variable, since it only relies on the triangle inequality.
\begin{lem}\label{Gap-dyadic}
Given a complex number $D\not=0$, let $P(z) = D \prod_{j=1}^N (z-z_j) = \sum_{k=0}^N \nu_k \,z^k$ be a polynomial of degree $N$. Assume that the roots $z_j$ are ordered so that $|z_1| \le \cdots \le |z_N|$. Let $G_j =\{ z\in \bbC: ~ A|z_j| \le |z| \le A^{-1} |z_{j+1}|\}$ for $1\le j\le N-1$, and $G_N = \{ z\in \bbC: ~ |z| \ge A|z_N|\}$. Then there exists a constant $C=C(N)>1$ such that for any $A\ge C(N)$ and $1\le j\le N$, if $G_j$ is nonempty, then
\\
$(i)$ $|P(z)| \approx |\nu_j| |z|^j$ for $z \in G_j$ ;
\\
$(ii)$ for $1\le j\le N-1$, we have $|\nu_j| \approx |D| \prod_{\ell = j+1}^N |z_\ell|$. (For $j=N$, we have $\nu_N = D$. In particular, $\nu_j \not=0$, $1\le j\le N$.)
%
\end{lem}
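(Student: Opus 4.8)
The plan is to prove Lemma \ref{Gap-dyadic} by a direct estimate on the factored form $P(z) = D\prod_{j=1}^N(z-z_j)$, working on each dyadic annulus-type region $G_j$ separately. The key observation is that on $G_j$ the magnitude $|z|$ is large compared to $|z_1|,\dots,|z_j|$ and small compared to $|z_{j+1}|,\dots,|z_N|$, so that each factor $(z-z_\ell)$ is comparable to $|z|$ for $\ell\le j$ and comparable to $|z_\ell|$ for $\ell>j$, provided the gap parameter $A$ is large enough. First I would fix $A\ge C(N)$ (with $C(N)$ to be chosen) and take $z\in G_j$ nonempty. For $\ell\le j$ we have $|z_\ell|\le|z_j|\le A^{-1}|z|$, hence by the triangle inequality $\tfrac12|z|\le|z|-|z_\ell|\le|z-z_\ell|\le|z|+|z_\ell|\le 2|z|$ once $A\ge 2$; similarly for $\ell\ge j+1$ we have $|z|\le A^{-1}|z_{j+1}|\le A^{-1}|z_\ell|$, so $\tfrac12|z_\ell|\le|z-z_\ell|\le 2|z_\ell|$ once $A\ge 2$. (For $G_N$, one only uses the first type of estimate for all $\ell=1,\dots,N$.) Multiplying these $N$ comparisons gives
\[
|P(z)| = |D|\prod_{\ell=1}^N |z-z_\ell| \approx |D|\,|z|^{j}\prod_{\ell=j+1}^N |z_\ell|,
\]
with implied constants depending only on $N$ (they are at worst $2^N$ and $2^{-N}$).

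Next I would identify the right-hand side with $|\nu_j||z|^j$ by computing $\nu_j$ from the factorization. Writing $P(z)=D\prod_{\ell=1}^N(z-z_\ell)$ and expanding, the coefficient of $z^j$ is $\nu_j = D\,(-1)^{N-j}e_{N-j}(z_1,\dots,z_N)$, the elementary symmetric polynomial of degree $N-j$ in the roots. The point is that under the ordering $|z_1|\le\cdots\le|z_N|$ and the gap hypothesis, the monomial $z_{j+1}z_{j+2}\cdots z_N$ dominates all other monomials appearing in $e_{N-j}$: any other monomial of degree $N-j$ either uses some $z_\ell$ with $\ell\le j$ (losing a factor of at least $A$ relative to the corresponding $z_m$, $m>j$, it must replace) and hence is smaller by a factor $\gtrsim A$. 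Summing the $\binom{N}{N-j}-1$ subdominant terms and choosing $A=C(N)$ large enough that this sum is, say, at most $\tfrac12|z_{j+1}\cdots z_N|$, we get $|e_{N-j}(z_1,\dots,z_N)| \approx \prod_{\ell=j+1}^N|z_\ell|$, hence $|\nu_j|\approx|D|\prod_{\ell=j+1}^N|z_\ell|$, which is $(ii)$; and combined with the previous display this yields $(i)$. The case $j=N$ is immediate since $e_0=1$ gives $\nu_N=D$.

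The only mildly delicate point — the ``main obstacle'' in an otherwise elementary argument — is the bookkeeping needed to make the domination of $z_{j+1}\cdots z_N$ over the other monomials in $e_{N-j}$ quantitative and uniform in $j$, and to choose a single constant $C(N)$ that works simultaneously for all $j$ and makes both the ``each factor is comparable'' step and the ``dominant monomial'' step go through. Concretely, one needs: for every monomial $\prod_{\ell\in S}z_\ell$ with $|S|=N-j$ and $S\ne\{j+1,\dots,N\}$, a bound of the form $\prod_{\ell\in S}|z_\ell|\le A^{-1}\prod_{\ell=j+1}^N|z_\ell|$. This follows because if $S\ne\{j+1,\dots,N\}$ then $S$ omits some index $m>j$ and includes some index $k\le j$; pairing the largest omitted index with the smallest included index and using $|z_k|\le|z_j|\le A^{-1}|z_{j+1}|\le A^{-1}|z_m|$ (together with $|z_\ell|\le|z_m|$ for the remaining comparisons, reordered appropriately) gives the factor $A^{-1}$. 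Since there are at most $2^N$ such monomials, taking $C(N)=2^{N+2}$ (say) suffices to absorb them and to run all the triangle-inequality comparisons. I would present this pairing argument carefully, as it is the one place where the precise hypothesis $|z_1|\le\cdots\le|z_N|$ and the definition of the gap regions $G_j$ are used in an essential way; everything else is the triangle inequality, exactly as in \cite{FW}.
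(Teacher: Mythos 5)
Your proof is correct, and it is essentially the intended argument: the paper itself gives no proof but defers to Lemma 3.1 of \cite{FW}, remarking that the proof relies only on the triangle inequality, and your factor-by-factor comparison on $G_j$ together with the dominant-monomial identification of $\nu_j = D(-1)^{N-j}e_{N-j}(z_1,\dots,z_N)$ is exactly that standard triangle-inequality argument, carried out for complex roots. The only caveat worth noting is the degenerate situation $|z_{j+1}|=0$ (so $G_j=\{0\}$), where the parenthetical claim $\nu_j\neq 0$ in the lemma fails as literally stated; this is an imprecision in the statement itself rather than a gap in your argument, and both estimates $(i)$ and $(ii)$ hold trivially there.
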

%






The idea of this lemma helps us prove a uniform lower bound for the Jacobian associated to complex curves of simple type in $\bbC^3$, when $\phi(z)$ is an arbitrary polynomial.
This result may be of some independent interest. For instance, it is likely to have some implications for the related averaging operators.
(See e.g. \cite{DLW}, \cite{Sto}.)
\begin{lem}\label{J-simplepoly}
Let $\gamma(z) = (z, z^2,\cdots, z^{d-1}, \phi(z))$, where $\phi(z)$ is a polynomial of degree at most $N$.
%
Let $J (u_1, \cdots , u_d) = J_\bbC (u_1, \cdots , u_d)$ be the determinant of the holomorphic Jacobian of the transformation $(u_1, \cdots , u_d) \mapsto \sum_{i=1}^d \gamma(u_j)$.

If $d=3$, then there exist a constant $c(d, N) > 0$, a positive integer $M= M(d , N)$, and a collection of pairwise disjoint, convex open sets $B_1, \cdots , B_M$,
such that
$\bbC = \cup_{\ell =1}^M B_\ell$, ignoring a null-set,
and such that for $1\le \ell \le M$,
\begin{equation}\label{Jmax}
|J (u_1, \cdots , u_d)| \ge c(N)\, V(u_1, \cdots , u_d) \max_{1\le i\le d} |\phi^{(d)}(u_i)|
\end{equation}
whenever $u_j \in B_\ell$, $1\le j\le d$.
\end{lem}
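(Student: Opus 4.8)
The plan is to compute the holomorphic Jacobian determinant $J(u_1,u_2,u_3)$ explicitly, reduce it via the Vandermonde factorization to a single "divided difference" factor involving $\phi$, and then exploit the dyadic structure of the roots of $\phi^{(d)}$ (via Lemma~\ref{Gap-dyadic}) together with a sector decomposition (as in Lemma~\ref{jacest}) to obtain the claimed pointwise lower bound on a suitable partition of $\bbC$. Concretely, with $d=3$ and $\gamma(u)=(u,u^2,\phi(u))$, the Jacobian matrix has columns $\gamma'(u_i)=(1,2u_i,\phi'(u_i))$, so
\[
J(u_1,u_2,u_3)=\det\begin{pmatrix}1 & 1 & 1\\ 2u_1 & 2u_2 & 2u_3\\ \phi'(u_1) & \phi'(u_2) & \phi'(u_3)\end{pmatrix}
= 2\,(u_2-u_1)(u_3-u_1)\,\big[\text{a divided difference of }\phi'\big].
\]
Performing column operations exactly as in the proof of Lemma~\ref{jacest} (using the identities for $P_N$ in Lemma~\ref{rmk}), one factors out $(u_2-u_1)(u_3-u_1)(u_3-u_2)=\pm V(u_1,u_2,u_3)$ and is left with $J=c\,V(u_1,u_2,u_3)\cdot Q(u_1,u_2,u_3)$, where $Q$ is a symmetric polynomial that, when $\phi(u)=\sum_k \nu_k u^k$, equals $\sum_k \nu_k\,\binom{k}{?}\,P_{k-3}(u_1,u_2,u_3)$ up to harmless constants (only the terms of degree $\ge 3$ survive the differentiation). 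Thus everything reduces to bounding $|Q|$ from below by $\max_i|\phi'''(u_i)|$ on each piece of a partition.

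Next I would bring in the root structure of $\phi'''$. Write $\phi'''(u)=D\prod_{j=1}^{N-3}(u-z_j)$ with $|z_1|\le\cdots\le|z_{N-3}|$ and let $A=A(N)$ be large; on each annular gap $G_j=\{A|z_j|\le|u|\le A^{-1}|z_{j+1}|\}$ Lemma~\ref{Gap-dyadic} gives $|\phi'''(u)|\approx|\mu_j||u|^{j}$ and, crucially, that the polynomial $\phi'''$ "behaves like its $j$-th term" there; integrating back up, $\phi$ itself behaves on $G_j$ like a single monomial of degree $j+3$ (plus lower-order terms with controlled coefficients). Restricting attention to one such annulus $G_j$ and subdividing it further into $C(N)$ sectors $\Delta_\ell=\{0<\arg u<\varepsilon\}$ (after rotation, using homogeneity to pull powers of the rotation factor $a$, $|a|=1$, out of each row of the Jacobian, exactly as in Lemma~\ref{jacest}), I would take real parts: on such a sector $\operatorname{Re}(u_i)\approx|u_i|$ and $\operatorname{Im}(u_i)<\varepsilon\operatorname{Re}(u_i)$, so
\[
|\operatorname{Re}(Q)|\ \ge\ \tfrac12\,|\mu_j|\,P_{j}(x_1,x_2,x_3)\ -\ (\text{error terms with }x_{ik}\text{'s replaced by }\varepsilon y_{ik}\text{'s})\ \gtrsim\ |\mu_j|\sum_{i=1}^3 x_i^{j}\ \approx\ \sum_{i=1}^3|\phi'''(u_i)|,
\]
provided $\varepsilon=\varepsilon(N)$ is small enough, and provided all the $u_i$ lie in the same annular-sector piece. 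The collection $\{B_\ell\}$ is then the common refinement of the annuli $G_j$ (plus the innermost ball $\{|u|\le A|z_1|\}$ and outermost region $\{|u|\ge A|z_{N-3}|\}$) with the finitely many rotated sectors; these are convex (intersections of a sector with an annulus are \emph{not} convex, so in practice one further subdivides each annular sector into boundedly many convex curvilinear pieces, or replaces annuli by dyadic "squares" $\{2^k\le|u|<2^{k+1}\}$ refined into convex angular sectors) and cover $\bbC$ up to a null set. The inequality \eqref{Jmax} on each $B_\ell$ follows by combining the Jacobian factorization with the real-part estimate above.

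\textbf{Main obstacle.} The delicate point is the same one flagged in Lemma~\ref{jacest}: controlling the error terms $E_i$ that arise when one passes from the complex symmetric polynomial $Q(u_1,u_2,u_3)$ to its real part $P_{j}(x_1,x_2,x_3)$. One must verify that, after the dyadic localization, the surviving "main" monomial in $Q$ (the degree-$j$ part coming from the $j$-th coefficient $\mu_j$ of $\phi'''$) genuinely dominates \emph{all} the cross terms — both the lower-degree terms of $\phi'''$ (handled by Lemma~\ref{Gap-dyadic}(i), which says the chosen monomial beats the rest of the polynomial on $G_j$) and the imaginary-part contributions (handled by the smallness of $\varepsilon$ in the sector). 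Making these two smallness requirements compatible — $A$ large depending only on $N$, then $\varepsilon$ small depending only on $N$ and $A$ — and checking that the number $M$ of pieces stays bounded in terms of $N$ (and independent of the coefficients of $\phi$, which is the whole point of the uniformity claim) is the crux; the rest is the bookkeeping of column operations already carried out in the proof of Lemma~\ref{jacest}.
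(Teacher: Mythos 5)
Your treatment of the gap annuli is plausible: on $G_j$, after deriving from Lemma~\ref{Gap-dyadic}(ii) the termwise dominance of the $j$-th term of $\phi'''$ over all the others, multiplying each coefficient by a bounded nonzero constant is harmless, and the real-part argument in a narrow sector goes through much as in Lemma~\ref{jacest}. But this covers only the regions where a single monomial of $\phi'''$ dominates. Your proposed partition (gap annuli, an innermost ball, an outermost region, refined by sectors) omits the dyadic annuli $D_j=\{A_1^{-1}|z_j|<|z|<A_1|z_j|\}$ around the circles carrying the roots of $\phi'''$, and the innermost ball you do include contains roots of $\phi'''$; on all of these sets no single term dominates, $\phi'''$ vanishes inside them, and your dominant-monomial estimate has nothing to grip. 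These sets are not null sets, so the lemma is not proved there — and this near-root regime is precisely where the paper's proof does almost all of its work.

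The deeper reason your reduction cannot be pushed through there is the coefficient mismatch you dismiss as ``harmless constants'': factoring $V$ out of the determinant leaves $Q=\sum_k c_k\nu_k P_{k-3}(u_1,u_2,u_3)$ with $k$-dependent constants $c_k$ that differ from the coefficients $k(k-1)(k-2)\nu_k$ of $\phi'''$. Rescaling coefficients term by term is innocuous where one term dominates, but it destroys any relation to the root structure of $\phi'''$, so near the zeros of $\phi'''$ there is no way to compare $|Q|$ with $\max_i|\phi'''(u_i)|$ by inspecting monomials. The paper avoids expanding in coefficients altogether: it uses the representation \eqref{Jrep} of $J$ as a triple line integral of $\phi'''$ itself over the triangle (this is where convexity of the pieces is really used), recenters at each root $b$ and restricts to the nearest-root cell $S_1(b)$, writes $\phi'''(z+b)=g(z)(-1)^{N-j}z^j\prod_{\ell>j}z_\ell$ and proves $|g(z)|\ge 2^{2-N}$ on $S_1\cap\Delta\cap D_j$ together with the angular control \eqref{b0ineq} after a further subdivision based on polar derivative bounds for $g$, and then extracts a lower bound for the modulus of the oscillatory triple integral via the case analysis on the largest interior angle of the triangle $u_1u_2u_3$ (including the splitting needed when $|w-v|\gg|v-u|$), before finally invoking the algebraic identity of Lemma~\ref{jacest} for the model integrand $H_j(z-b)^j$. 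None of this machinery appears in your plan, and without it (or a genuine substitute) the bound \eqref{Jmax} is unproved on the neighborhoods of the roots of $\phi'''$; the convexity bookkeeping you flag is, by contrast, a minor issue.
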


Recall that $V(u_1, u_2, u_3) = |u_1 - u_2| \cdot |u_1 - u_3| \cdot |u_2 - u_3|$, when $d=3$.

\begin{remark}\label{rem-Gamma}
If $\gamma(z)$ in Lemma \ref{J-simplepoly} is replaced by
\[ \Gamma(z) = (P_1(z), \cdots, P_{d-1}(z), \phi(z))
\]
as in \eqref{offsp} below, then the Jacobian of the corresponding mapping is the same as for $\gamma(z)$ when they have the same $\phi(z)$. So, we should obtain the same conclusion \eqref{Jmax} in this case. For example, when $d=3$, the new Jacobian $J(u_1, u_2, u_3)$ is again given by the formula \eqref{Jrep} below.
\end{remark}

\begin{proof}[Proof of Lemma \ref{J-simplepoly}] Let $d=3$. If $0\le N\le 2$, then $\phi'''\equiv 0$ and $J\equiv 0$. Moreover, if $N=3$, then $\phi'''(z)$ is a non-zero constant and $J(u_1, u_2, u_3)$ is a constant multiple of $V(u_1, u_2, u_3)$. Thus, we may assume that $N\ge 4$ and $\phi'''(z)$ has at least one zero.
Our goal is to decompose $\bbC$ into a collection $\{ B\}$ of $M(N)$ pairwise disjoint, convex open sets so that the inequality \eqref{Jmax} holds on each $B$. To this end, we will represent $J(u_1, u_2, u_3)$ as an integral as in \eqref{Jrep} below.
It may be worthwhile to point out that, compared to the real case, the complex case is more delicate, because it is necessary to control carefully the argument of the integrand as well as the magnitude, in order to get a good lower bound for the multiple integral of a function of a complex variable.
\medskip

For the sake of clarity we will divide the rest of the proof into four steps.

\medskip

\noindent
{\sl Step 1. Preliminary decompositions of $\bbC$.}

To get a decomposition of $\bbC$, we begin by fixing a zero $b$ of $\phi'''(z)$. Let $P(z) = \phi'''(z+b)$. Then $P(0)=0$. Let us write $P(z)= D z^{a_1} \prod_{j=2}^m (z-\eta_j)^{a_j}$, where 0 and $\eta_j$ are the distinct roots of $P(z)$, with multiplicity $a_j$, so that $N - 3 = a_1+\cdots+ a_m$. Put $S_1 = S_1(b)= \{ z\in \bbC: ~ |z| < |z-\eta_j|, ~ \forall j \not= 1\}$ as in \cite{DeW}. We will decompose $S_1$ further in four different ways.

\medskip

{\sl (1.a) Decomposition into gap annuli and dyadic annuli.}
Let us rewrite $P(z) = D \prod_{j=1}^N (z-z_j) = \sum_{k=0}^N \nu_k \, z^k$ as in Lemma \ref{Gap-dyadic}, with $z_j$, $\nu_j$ and $G_j$ as in that lemma. (By abuse of notation we will write $N$, instead of $N-3$, for $\deg(P)$. Thus, we have $N\ge 1$ in this new notation.) Since a constant factor in $\phi(z)$ can be canceled from both sides of the inequality \eqref{Jmax}, we may assume that $D=1$. Since $P(0)=0$, we have $z_1=0$. The region $G_j$ may be called a `gap annulus' in analogy with the terminology `gap interval' in \cite{DeW}.
From Lemma \ref{Gap-dyadic} it follows that
$|P(z)| \approx |\nu_j| |z|^j$ for $z \in G_j$.
Also, define the `dyadic annuli' by
\[ D_j = \{ z\in \bbC: ~ A_1^{-1} |z_j| < |z| < A_1 |z_j|\}, ~ 2\le j\le N-1 ,
\]
for some $A_1 >0$ chosen slightly larger than $A$.
Thus, there is a small overlap between the regions $G_j$ and $D_j$, which will help us define certain {\it convex} open sets $B$ contained in them, cutting off some parts of the non-convex regions (annuli) $G_j$ and $D_j$. (See the second paragraph under the heading {\it `Decomposition of $G_j$'} below.)
\medskip

{\sl (1.b) Decomposition into sectors.}
By dividing $\bbC$ into narrow sectors $\{ \Delta\}$ centered at $0$ and then by using rotation, we may assume $0< y < \eps x$ in $\Delta$, for some $\eps = \eps(N)$, where we have written $z - b =x+iy$.
Then we have $|\phi'''(z)| = |P(z-b)| \approx |\nu_j| \cdot |z-b|^j \approx |\nu_j| \cdot x^j$, for $z-b \in \Delta \cap G_j$.
\medskip

{\sl (1.c) An integral representation of the Jacobian.}
Assume that $U$ is a convex open set. (We will take $U=b+B$ later.) Let $u$, $v$, $w \in U$.
Let $\theta$ be the largest of the interior angles of the triangle $uvw$. Then $\pi/3 \le \theta \le \pi$. By renaming the points if necessary, we may assume that the angle at $v$ equals $\theta$ and that $|v-u| \le |w-v|$.
We have the representation
\begin{equation}\label{Jrep}
J (u, v, w) = \int_u^v \int_v^w \int_{s_1}^{s_2} \phi'''(z) \, dz\, ds_2 \,ds_1 = \int_u^v \int_v^w \int_{s_1}^{s_2} P(z-b) \, dz\, ds_2 \,ds_1
\end{equation}
where each integral is regarded as a line integral over a line segment. (This is where we need the convexity of $U$.)

By factoring out a unit complex number, we may also assume that $v-u$ is a positive real number. This amounts to having the vector $\overrightarrow{uv}$ horizontal and pointing to the right. We parametrize the line integrals above by setting $s_1 = u + (v-u) t_1$, $s_2 = v + (w-v) t_2$, and $z = s_1 + (s_2 - s_1) t_3$, with $0 \le t_j \le 1$, to obtain
\[ J (u, v, w) = (v-u) (w-v) \int_0^1 \int_0^1 \int_0^1 [s_2(t_2) -s_1 (t_1)] \phi'''(z (t_1, t_2, t_3)) \, dt_3 \, dt_2 \, dt_1 .
\]
\medskip

\noindent
{\sl Step 2. Further decompositions of the regions.}

Recall that $\phi'''(z+b) = P(z) = \prod_{j=1}^N(z-z_j)$. Let us rewrite it in the form
\begin{equation}\label{gz}
 P(z) = g(z)(-1)^{N-j}\, z^j \prod_{\ell=j+1}^N z_\ell
\end{equation}
where
\[ g(z) = \prod_{i=1}^j \Big(1- {z_i\over z} \Big) \prod_{\ell =j+1}^N \Big(1- {z \over z_\ell} \Big).
 \]
%

We want to decompose the range of $g(z)$, contained in an annulus, into small radial sectors.
By considering the pre-images of the sectors we want to decompose $S_1 \cap \Delta \cap G_j$ and $S_1 \cap \Delta \cap D_j$ further into convex sets $\{ B\}$ with the following property:

After multiplying by a unit complex number if necessary, $g(z)$ can be put in the form $g(z) = \xi(z) + i \eta(z)$ with
\begin{equation}\label{b0ineq}
0 < b_0 \, |\eta(z)| \le \xi(z)
\end{equation}
for all $z \in B \subset S_1 \cap \Delta \cap E_j$ (with $E_j = G_j$ or $D_j$), where $b_0 >0$ is a large absolute constant to be chosen later. If this holds, then we have $\xi(z) \le |g(z)| \le (1+b_0^{-2})^{1/2} \xi(z)$ for $z\in B$.

\medskip
To achieve this goal, we need to decompose $G_j$ and $D_j$ further. This can be done separately for $G_j$ and $D_j$ as follows:

\medskip
{\sl (2.a) Decomposition of $G_j$.}
If $z\in S_1 \cap \Delta \cap G_j$, we have $A|z_j| \le |z| \le |z_{j+1}|/A$. We may assume $z_{j+1} \not=0$, since otherwise $G_j = \{ 0\}$ and there is nothing to prove.
%
%
%
Since $1- z_i/z= 1+ O(1/A)$, $1\le i\le j$, and also $1- z/z_\ell = 1+ O(1/A)$, $j+1\le \ell \le N$, taking $A= C_0 N$ gives $g(z) = 1+ O(C_0^{-1})$.
In fact, it is easy to see that $|g(z)-1| \le 2 \, C_0^{-1}$, which yields the condition \eqref{b0ineq} if we choose $C_0 \ge 3\, b_0$, say.

It only remains to cut $S_1 \cap \Delta \cap G_j$ into a few {\it convex} open sets $B$ so that their union covers all of $S_1 \cap \Delta \cap  G_j$, except for a null set and some little pieces which lie in the intersections $D_i \cap G_j \cap S_1 \cap \Delta$, for $i=j$ and $i= j+1$. (The remaining parts of the sets $D_i \cap G_j \cap S_1 \cap \Delta$, for $i=j,\, j+1$, will be covered by the $B$'s arising from the decomposition of $D_i$, which is described next.)


\medskip
{\sl (2.b) Decomposition of $D_j$.}
If $z \in S_1 \cap \Delta \cap D_j$, we have $A_1^{-1} |z_j| < |z| < A_1 |z_j|$, where $A_1 = (1+\delta_0 ) A = C_1 N = (1+\delta_0 )C_0 N$ for some small $\delta_0 >0$. We may assume $z_j \not= 0$ here, since otherwise $D_j$ is empty.
Let us recall $P(z) = g(z) (-1)^{N-j}\, z^j \prod_{\ell=j+1}^N z_\ell$, as in \eqref{gz}.

Note that $|(z-z_i)/z| \ge 1$ for all $i$ if $z\in S_1$, and also $|(z_\ell - z)/z_\ell| \ge (1/2)$ for all $\ell$ if $z\in S_1$. In fact, the second inequality follows from the first, since $|z_\ell| \le |z- z_\ell| + |z| \le 2 |z- z_\ell|$ if $z \in S_1$. From this it follows that
\begin{align}\label{g-lb}
|g(z)| \ge 2^{j-N} \ge 2^{2-N} \quad \forall ~ z \in S_1 \cap \Delta \cap D_j, ~ 2\le j\le N .
\end{align}
The inequality \eqref{g-lb} gives a separation from the origin, which is needed to obtain a small angular support for $g(B)$ so that \eqref{b0ineq} holds, where $B$ is to be specified shortly.

Moreover, we have
$|\partial_r (1- z_i/z)| \le |z_i|/r^2 \le |z_j|/r^2 \le A_1^2 / |z_j|$ (for $i\le j$) and $|\partial_r (1- z/z_\ell)| \le 1/|z_\ell| \le 1/|z_j|$ (for $\ell \ge j$).
Hence,
\[ |\partial_r (g(r,\theta))| \le N(1+A_1)^{N+1} |z_j|^{-1} .
\]
Likewise, we get $|\partial_\theta (1- z_i/z)| \le |z_i|/r \le |z_j|/r \le A_1$ (for $i\le j$) and $|\partial_\theta (1- z/z_\ell)| \le r/|z_\ell| \le r/|z_j| \le A_1$ (for $\ell \ge j$).
So, $|\partial_\theta (g(r,\theta)| \le N(1+A_1 )^N$.

Hence, we can divide the $r$-interval, given by $A_1^{-1} |z_j| < r < A_1 |z_j|$, into $C(N)$ pieces of length $L \le C(N)^{-1} A_1 |z_j|$ so that
\begin{align}\label{partial-r}
|\partial_r (g(r,\theta))| \cdot L &\le N (1+A_1)^{N+1} |z_j|^{-1} \times C(N)^{-1} A_1 |z_j| \\
&\le C(N)^{-1} N (1+A_1)^{N+2} . \notag
\end{align}
(Note that the two factors involving $|z_j|$ cancel out.)

Similarly, if we divide the $\theta$-interval into $C(N)$ pieces of angle $\Theta$, then we have
$|\partial_\theta (g(r,\theta))| \cdot \Theta \lc N (1+A_1)^N \times \eps(N)\, C(N)^{-1}$. Since this is smaller than the previous estimate, for simplicity we can use the same number $C(N)$ here.

This allows us to choose $C(N)^2$ pairwise disjoint, convex open sets $\{ B\}$ in $S_1 \cap \Delta \cap D_j$ such that $g(B)$ is contained in a small disk of diameter $\lc C(N)^{-1} N (1+A_1)^{N+2}$. We can do this in such a way that the collection $\{ B\}$, which consists of all the $B$'s from this step (for $D_j$, $2\le j\le N$) and the previous one (for $G_j$, $1\le j\le N$), covers all of $S_1\cap \Delta$, except for a null-set.

The estimates \eqref{g-lb} and \eqref{partial-r} imply that the angular support of $g(B)$ (when the angle is measured from $0$) is bounded by
\begin{align*}
 {C_2\, C(N)^{-1} N (1+A_1)^{N+2}\over c_0 \, 2^{2-N}} = {C_2 \, 2^N N (1+C_1 N)^{N+2}\over 4\, c_0 \, C(N)} < {1\over 2\, b_0} \notag
\end{align*}
if $C(N)$ is chosen so that $C(N) > b_0 \, c_0^{-1} C_2\, 2^{N-1} N (1+C_1 N)^{N+2}$. Therefore, we obtain \eqref{b0ineq} for every $z \in B \subset S_1 \cap \Delta \cap D_j$.
\medskip

\noindent
{\sl Step 3. A lower bound for the integral.}

Let us now put $s_2 - s_1 = s_2(t_2) - s_1 (t_1) = \alpha + i\beta$ and $H_j \cdot (z-b)^j = a + i\delta$, where $H_j = \prod_{k=j+1}^N |z_k|$. Thus, we have $\phi'''(z) = \pm (a + i\delta)(\xi + i\eta)$. By our assumptions, $\beta$ is single-signed.
Let us assume $\beta \ge 0$ for the sake of definiteness.
%
Since $|\delta| \le c \, \eps a$ when $z \in b + B \subset b+\Delta$, we have
\begin{align}\label{Repart}
\Re [(s_2-s_1)\phi'''(z)] &= (\alpha a - \beta \delta)\xi - (\beta a + \alpha \delta) \eta \\
&= \alpha a \xi - \beta a \eta + O(\eps |s_2-s_1| \, a \, \xi) ; \notag
\end{align}
\begin{align}\label{Impart}
\Im [(s_2-s_1)\phi'''(z)] &= (\alpha a - \beta \delta)\eta + (\beta a + \alpha \delta) \xi\\
&= \alpha a \eta + \beta a \xi + O(\eps |s_2-s_1| \, a \, \xi) . \notag
\end{align}
Note that the signs of \eqref{Repart} and \eqref{Impart}
do not affect our argument, because we estimate the absolute value of the Jacobian $J(u,v,w)$ from below as follows:
\[  |J (u, v, w)| \gc |v-u||w-v| \Big|\int_0^1 \int_0^1 \int_0^1 \Im[(s_2-s_1)\phi'''(z)] \, dt_3 dt_2 dt_1\Big| .
\]
%
%
%
Fix a set $B$ as above and assume that $u$, $v$, $w \in b+ B \subset b+ (S_1 \cap \Delta \cap E_j)$, with $E_j = G_j$ or $D_j$.
\medskip

Let us now consider the following two cases separately:
%
$(3.i)$ $\pi/3 \le \theta < \pi/2$, and $(3.ii)$ $\pi/2 \le \theta \le \pi$. (Recall that $\theta$ is the interior angle at the vertex $v$ of the triangle $uvw$.)

\medskip

{\sl The case $(3.i)$: $\pi/3 \le \theta < \pi/2$}. We claim that
\[ \int_{\{\beta \ge |\alpha|/2 \}} \beta \,a \xi \ge c \,G
\]
where we put
\[ G = \int_0^1 \int_0^1 \int_0^1 |s_2-s_1| \cdot H_j \cdot x^j \, \xi \, dt_3 dt_2 dt_1 .
\]
Recall that $H_j = \prod_{k=j+1}^N |z_k|$ and $z-b = x + i y$.

This may be seen as follows.
Fix $t_1 \in [0,1]$. Let $t_2 (t_1)$ be the smallest value of $t_2 \in [0,1]$ such that $\beta \ge \alpha/2 >0$, i.e. $\Im (s_2 (t_2) - s_1 (t_1) ) \ge (1/2)\Re (s_2 (t_2) - s_1 (t_1) ) >0$ for $t_2 \ge t_2(t_1)$.
If $|w-v|$ is much larger than $|v-u|$, then the term $x^j = [\Re(z-b)]^j$, which is comparable to $\Re[(z-b)^j]$ for $z-b\in \Delta$, may vary a lot in the triangle $uvw$. Thus, we split the integral into two parts. (This splitting is not
necessary when $|w-v| \le 2 |v-u|$, say.)
%

By our assumptions it follows that $1-t_2(t_1) \ge t_2(t_1)$ for $t_1 \in [0,1]$.
Note that the triangle with vertices at $u$, $v$ and $s_2 (2 \,t_2(0))$ is contained in the ball $B(v, 2 \rho \eps)$, centered at $v$, where $\rho =|v-b|$. Also, for all $z \in B(v, 2 \rho \eps)$, we have $x\approx \rho$. Thus, for $t_1\in [0, 1]$, we have
\begin{align}\label{t1fixed}
\int_{[t_2 (t_1),\, 1]} \int_0^1 |s_2 - s_1| \, H_j \cdot x^j \, \xi \, dt_3 dt_2 \ge  \int_{[2 t_2 (t_1),\, 1]} \int_0^1 |s_2 - s_1| \, H_j \cdot x^j \, \xi \, dt_3 dt_2 +
\end{align}
\[ + \, c \int_{[t_2 (t_1),\, 2t_2 (t_1)]} \int_0^1 |s_2 - s_1| \, H_j \cdot \rho^j \, \xi \, dt_3 dt_2 =: J_1 + c J_2 .
\]
%
%

Given $s_1=s_1(t_1)$, let $L = L(t_1)$ be the distance from $s_1$ to the segment $vw$. Then the lengths of segments $[s_1, s_2]$ with $s_2 = s_2(t_2)$ for any $t_2 \in [0,\, 2t_2 (t_1)]$ are all comparable to $L$. In fact, $L \le |s_1-s_2| \le 2L$.
Also, we have $\xi \approx |g(z)| \approx 1$ on $B$, where the implied constants depend only on $N$. These facts imply that
\[ J_2 \approx \int_{[0, \,t_2 (t_1)]} \int_0^1 |s_2 - s_1| \,H_j \cdot x^j \,\xi \, dt_3 dt_2 =: J_3
.\]
Thus, integrating both sides of the inequality \eqref{t1fixed} in $t_1 \in [0,1]$ gives
\[ \int_{\{\beta \ge \alpha/2 > 0 \}} \beta a \xi \gc \int_0^1 J_1 + c\int_0^1 J_2 \ge \int_0^1 J_1 + {c\over 2} \int_0^1 J_2 + {c\over 2} \int_0^1 c_1 J_3 \gc G
\]
since $G \approx \int_0^1 (J_1 + J_2 + J_3) \,dt_1$.
\\

%

Hence, it follows from \eqref{b0ineq} that
\[  \int_0^1 \int_0^1 \int_0^1 \Im[(s_2-s_1)\phi'''(z)] \, dt_3 \, dt_2 \, dt_1 =
\int \beta a \xi +  \int\alpha a \eta +  O(\eps G)
\]
\[  \ge \int_{\{\beta \ge \alpha/2 > 0 \}} \beta a \xi - b_0^{-1} \int |\alpha| a\xi + O(\eps G)
 \ge c_2 \, G - b_0^{-1} C_3 G + O(\eps G)
 \]
\[ \ge (c_2 - b_0^{-1} C_3 - C_4 \eps) G \ge {c_2\over 2} G
\]
if $b_0$ is chosen sufficiently large and $\eps$ sufficiently small.
Therefore, we may conclude that
\begin{align}\label{jaclowerbd}
|J (u, v, w)| &\gc
|v-u||w-v| \Big| \int_0^1 \int_0^1 \int_0^1 |s_2-s_1| \cdot H_j \, x^j \, dt_3 dt_2 dt_1 \Big| \\ \notag
&\gc |v-u||w-v| \cdot \Big| \int_0^1 \int_0^1 \int_0^1 (s_2-s_1) \cdot H_j \cdot (z-b)^j \, dt_3 dt_2 dt_1 \Big| \\ \notag
&= \Big| \int_u^v \int_v^w \int_{s_1}^{s_2} H_j \cdot  (z-b)^j \, dz ds_2 ds_1 \Big| . \notag
\end{align}
Here we used the fact that $\xi \approx 1$.
\medskip


Next, observe that the last integral is precisely the determinant of the Jacobian of the transformation $(u_1, u_2, u_3) \mapsto \sum_{j=1}^3 \Gamma(u_j)$ if we take $\Gamma(z) = (z, z^2/2, \psi(z))$ with $\psi'''(z) = H_j \, (z-b)^j$. Therefore, one can use Lemma \ref{jacest} to show that the last integral is bounded below by a constant multiple of
\begin{align*}
H_j \, \cdot V &(u,v,w) \cdot \Big| \sum_{a_1+a_2+a_3=j} (u-b)^{a_1} (v-b)^{a_2} (w-b)^{a_3}\Big|
\\
& = H_j \, \cdot V(u,v,w)\cdot |P_j (u-b, v-b, w-b)|
\\
& \gc V(u,v,w) \max_{i=1,2,3} [H_j \, |u_i -b|^j] \\
&\approx V(u_1, u_2, u_3) \max_{i=1,2,3} |\phi'''(u_i)|
\end{align*}
for $u_1$, $u_2$, $u_3 \in b+ B$, with $B \subset S_1 \cap\Delta \cap E_j$, where $E_j = G_j$ or $D_j$. Here, $P_j$ is as in Definition \ref{PN}, and we wrote $u_1=u$, $u_2=v$, and $u_3 =w$. (To get the inequality above, we argue as in Lemma \ref{jacest}, using the fact that $0< y < \eps x$ in $\Delta$.) This yields the desired lower bound \eqref{Jmax} when $\pi/3 \le \theta < \pi/2$.
(Recall that $\theta$ is the interior angle at the vertex $v$ of the triangle $uvw$.)

\medskip


{\sl The case $(3.ii)$: $\pi/2 \le \theta \le \pi$}.
In this case, we have $\alpha \ge 0$ and $\beta \ge 0$ (or $\beta \le 0$).
This case is easier than the previous one, since there is no cancelation in either of the integrals $\int\alpha a \xi$ or $\int \beta a \xi$. Hence, in this case we have $\int \alpha a \xi + |\int \beta a \xi| = \int \alpha a \xi + \int |\beta| a \xi \ge c\, G$. If $\int |\beta| a \xi \ge (c/2) G$, then we get $|\int\Im[(s_2-s_1)\phi'''(z)]| \gc G$, as before. If not, then we have $\int\alpha a \xi \ge (c/2) G$, and so we would get $|\int\Re [(s_2-s_1)\phi'''(z)]| \gc G$, instead. In either case, we obtain \eqref{jaclowerbd} for $u_i \in b+ B \subset b+ (S_1 \cap\Delta \cap E_j)$, $1\le i\le 3$, and the rest of the argument is the same as the previous case $(3.i)$.
\medskip

\noindent
{\sl Step 4. Completion of the proof of Lemma \ref{J-simplepoly}.}

We will finish the argument by stating how to make up the collection $\{ B \}$ to cover $\bbC$. The sets $\{ B \}$, which arose from all the decomposition steps above, need to be translated by $b$, and then one gets $b+S_1 = \cup (b+ B)$, except for a null-set.
To be precise, each distinct root $b$ of $\phi'''(z)$ contributes its own collection $\{ b+ B \}$ to cover $b+ S_1$, where $S_1 = S_1(b)$ depends on $b$. In fact, $b+S_1(b) = \{ z\in \bbC: ~ |z-b| < |z- b'|, ~ \forall b' \not= b\}$, where $\{ b'\}$ is the zero set of $\phi'''(z)$. Finally, the collection of all these sets gives the desired decomposition of $\bbC$, i.e. $\bbC = \cup_b [b+S_1(b)] = \cup_b \cup_{B\subset S_1(b)} (b+ B)$, ignoring a null-set. It just remains to rename the sets $b+B$ as $B$ so that $\bbC = \cup B$, except for a null-set. This completes the proof of Lemma \ref{J-simplepoly}.
\end{proof}

{\sl A sublevel set estimate.}
We also need the following simple observation on the complex form of the Vandermonde determinant:
\begin{lemma}\label{VdMbound}
Let $v(h)=v(h_2, \cdots, h_d) = |V(z_1, \cdots, z_d)|$ denote the absolute value of the
complex Vandermonde determinant, where we put $h_j = z_j-z_1$,
$2\le j\le d$. Then
\[ |\{ h = (h_2, \cdots, h_d) \in \bbC^{d-1}: \ v(h) \le u\}| \le C_d \,
u^{4/d}, \quad \forall u>0 .
\]
\end{lemma}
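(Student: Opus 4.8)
The plan is to estimate the sublevel set by slicing in the last variable $h_d = z_d - z_1$ and inducting on $d$, using the factorization of the Vandermonde determinant. Recall that $V(z_1, \cdots, z_d) = \prod_{1\le i<j\le d}(z_j - z_i)$, so $v(h) = \prod_{1\le i<j\le d}|z_j - z_i|$. Fix all variables except $z_d$; then $v(h) = v'(h_2, \cdots, h_{d-1}) \cdot \prod_{i=1}^{d-1}|z_d - z_i|$, where $v'$ is the Vandermonde quantity for the first $d-1$ points. The factor $\prod_{i=1}^{d-1}|z_d - z_i|$, as a function of $z_d \in \bbC \cong \bbR^2$, is the modulus of a monic polynomial of degree $d-1$ in $z_d$, so for the event $\{v(h) \le u\}$ one needs $\prod_{i=1}^{d-1}|z_d-z_i| \le u/v'$. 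The standard sublevel estimate for the modulus of a polynomial of degree $d-1$ on $\bbC$ gives $|\{z_d : \prod_{i=1}^{d-1}|z_d - z_i| \le \lambda\}| \le C_d\, \lambda^{2/(d-1)}$, since the level set $\{|Q(z)| \le \lambda\}$ of a monic degree-$n$ complex polynomial has planar measure $\lesssim \lambda^{2/n}$ (this follows, e.g., by comparison with $|z|^n \le \lambda$ after accounting for the leading coefficient, or from the real sublevel bound $|\{t\in\bbR : |p(t)|\le \lambda\}| \lesssim \lambda^{1/n}$ applied on circles).

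Then I would integrate over the remaining variables: writing the measure of the full sublevel set as an iterated integral and applying the one-variable bound in $z_d$ first,
\[
|\{h : v(h) \le u\}| \le \int C_d \Big(\frac{u}{v'(h_2,\cdots,h_{d-1})}\Big)^{2/(d-1)} \, d\mu(h_2)\cdots d\mu(h_{d-1}).
\]
This reduces matters to showing $\int (v')^{-2/(d-1)} \, dh'$ is finite over a fixed bounded region — but one has to be a little careful because there is no a priori bound on the domain. The homogeneity of $v$ resolves this: $v$ is homogeneous of degree $d(d-1)/2$ under the scaling $h \mapsto th$, acting on a $(d-1)$-dimensional complex space of real dimension $2(d-1)$, so by the usual scaling argument (dilate so that, say, $\max_j |h_j| \approx 1$) it suffices to bound the measure of $\{h : v(h) \le u, \ \tfrac12 \le \max_j|h_j| \le 1\}$ and multiply by the number of relevant dyadic scales weighted appropriately; matching the exponent $2(d-1)/[d(d-1)/2] = 4/d$ is exactly what the scaling bookkeeping produces.

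Alternatively, and perhaps more cleanly, I would run the whole argument by induction on $d$ directly: assume $|\{h' \in \bbC^{d-2} : v'(h') \le s\}| \le C_{d-1} s^{4/(d-1)}$, use the layer-cake / distribution-function identity for the $z_d$-integral together with the polynomial sublevel bound, and then integrate in $h'$ using the inductive hypothesis via $\int_0^\infty |\{v' \le s\}|\, (\cdots)\, ds$-type manipulations. The base case $d=2$ is immediate: $v(h_2) = |h_2|$, so $|\{|h_2| \le u\}| = \pi u^2 = \pi u^{4/2}$.

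The main obstacle I anticipate is the bookkeeping in the inductive step: one must combine the one-dimensional complex polynomial sublevel bound (exponent $2/(d-1)$) with the inductive sublevel bound (exponent $4/(d-1)$) and check that the exponents combine to give exactly $4/d$, while controlling the integral $\int (v')^{-2/(d-1)}$ — which is only borderline integrable and genuinely requires the homogeneity/scaling normalization rather than a crude bounded-domain estimate. Getting the constant to depend only on $d$ (and not on the $z_j$) is automatic once the scaling reduction is in place, since translation invariance lets us fix $z_1 = 0$ and the estimate is then dilation-covariant.
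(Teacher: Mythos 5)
The decisive step is exactly the one you flag and then wave through. The integral $\int_{\bbC^{d-2}} v'(h')^{-2/(d-1)}\,d\mu(h_2)\cdots d\mu(h_{d-1})$ diverges at infinity (by homogeneity, $v'^{-2/(d-1)}$ decays only like $|h'|^{-(d-2)}$ in a space of real dimension $2(d-2)$), and the proposed scaling/dyadic repair does not produce the exponent $4/d$. Indeed, by homogeneity alone $|\{v\le u\}| = u^{4/d}\,|\{v\le 1\}|$, so the lemma is equivalent to the finiteness of $|\{v\le 1\}|$; your slicing with the \emph{uniform} sublevel estimate $|\{z:\ \prod_{i=1}^{d-1}|z-z_i|\le\lambda\}|\le C\lambda^{2/(d-1)}$ cannot prove this, because that bound is far from sharp precisely when the roots $z_1,\dots,z_{d-1}$ are widely separated, which is the regime that matters at infinity. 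Concretely, on a normalized shell $S_0=\{h:\ \tfrac12\le\max_j|h_j|\le 1\}$ your slicing only yields $|\{v\le s\}\cap S_0|\lc \min(1,s^{2/(d-1)})$, and since $2/(d-1)<4/d$ for $d\ge 3$ this is too weak: writing $S_k=2^kS_0$, the scaling identity and this shell bound give
\[
\sum_{k\ \mathrm{large}} \big|\{v\le u\}\cap S_k\big| \;=\; \sum_{k\ \mathrm{large}} 2^{2(d-1)k}\,\big|\{v\le u\,2^{-kd(d-1)/2}\}\cap S_0\big| \;\lc\; \sum_{k\ \mathrm{large}} u^{2/(d-1)}\,2^{(d-2)k} \;=\;\infty
\]
for every $d\ge3$, so the bookkeeping does not close. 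The layer-cake induction has the same defect: with the inductive bound $|\{v'\le s\}|\lc s^{4/(d-1)}$ one is led to $\int_0^\infty s^{2/(d-1)-1}\,ds=\infty$. The $d=3$ case shows what is being lost: for $|h_2|=R$ large, the set $\{h_3:\ |h_3||h_3-h_2|\le 1/R\}$ really has measure $\approx R^{-4}$ (two disks of radius $\approx R^{-2}$ about the roots), not the $\approx R^{-1}$ furnished by the uniform bound, and it is exactly this gain from root separation that makes $|\{v\le1\}|$ finite. So to complete your argument you would need a refined sublevel estimate (localization of $\{|Q|\le\lambda\}$ near the roots with radii depending on the mutual separations), or a correspondingly strengthened inductive statement; that ingredient is genuinely missing, and without it the approach fails rather than merely being incomplete in bookkeeping.

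For comparison, the paper's proof avoids all of this by a projection trick: since $|z_j-z_i|\ge|\Re z_j-\Re z_i|$ and $|z_j-z_i|\ge|\Im z_j-\Im z_i|$, the set $\{h:\ v(h)\le u\}$ is contained in $\{x\in\bbR^{d-1}:\ v(x)\le u\}\times\{y\in\bbR^{d-1}:\ v(y)\le u\}$, and the known real Vandermonde sublevel bound $|\{x:\ v(x)\le u\}|\le C_d\,u^{2/d}$ (\cite{DM1}, \cite{BOS1}) immediately gives $C_d\,(u^{2/d})^2=C_d\,u^{4/d}$. If you insist on a purely complex, self-contained argument, the separation-refined sublevel analysis described above is the extra work your route would require.
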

\begin{proof}
Write $x=(x_2, \cdots, x_d)$ and $y=(y_2, \cdots, y_d)$, where
$x_j = \Re h_j$ and $y_j = \Im h_j$. Then the set $G = \{ h \in
\bbC^{d-1}: \ |v(h)| \le u\}$ is contained in $\{ x\in \bbR^{d-1}:
\ |v(x)| \le u\} \times \{ y\in \bbR^{d-1}: \ |v(y)| \le u\}$,
because $|v(h)| \ge |v(x)|$ and $|v(h)| \ge |v(y)|$. Thus it
follows from the corresponding result in the real case ({\it cf.}
\cite{DM1}, \cite{BOS1}) that the measure $|G|$ in $\bbR^{2(d-1)}$
is bounded by $C_d (u^{2/d})^2$.
\end{proof}
%


\section{Interpolation of multilinear operators with symmetries}
\label{interpolation}

The following lemma was proved in \cite{BOS3}. It is a variant of an
interpolation theorem for $r$-convex spaces obtained in
\cite{BOS1}. The original version for Banach spaces, sometimes called the `multilinear trick', goes back to Christ \cite{Ch}.

\begin{theorem}\label{multtrick}
Let $n\ge 3$ be an integer and let $0<r\le 1$. Suppose that $\delta_1,\dots, \delta_n$ are real numbers so that the
$\delta_i$ are not all equal for $i\ge 2$. Let $V$ be an $r$-convex \footnote{This means that there is a constant $C$ such that $\| \sum_{j=1}^M f_j\|_V^r \le C \sum_{j=1}^M \| f_j\|_V^r$ for all $M\ge 1$ and $f_j \in V$. It is crucial that $C$ is independent of $M$. The Lorentz space $L^{r,\infty}$ is known to be $r$-convex for $0<r<1$. ({\it cf.} \cite{kalton}, \cite{stw})} Lorentz space, and let
$\overline X=(X_0, X_1)$ be a couple of compatible complete quasi-normed spaces.
Let $T$ be a multilinear operator defined on $n$-tuples of $(X_0+X_1)$-valued sequences
and suppose that for every permutation $\pi$ on $n$ letters we have the
inequality
\begin{equation}\label{Thypothesis}
\|T(f_{\pi(1)},\dots, f_{\pi(n)})\|_{V} \le
 \|f_1\|_{\ell_{\delta_1}^r(X_1)}
\prod_{i=2}^n \|f_i\|_{\ell_{\delta_i}^r(X_0)}\,.
\end{equation}
Then there is a constant $C$ such that
\begin{equation}\label{Tconclusion}
\|T(f_1,\dots, f_n)\|_{V} \le C
\prod_{i=1}^n \|f_i\|_{\ell_{\sigma}^{n r} \big(\overline X_{{1\over n}, n r} \big)}\,,\quad
\sigma = {1\over n}\sum_{i=1}^n \delta_i .
\end{equation}
\end{theorem}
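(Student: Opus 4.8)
This is an interpolation lemma of \emph{multilinear trick} type, and the plan is to follow the scheme of Christ \cite{Ch} and of \cite{BOS1,BOS3}: turn the one-parameter family of hypotheses \eqref{Thypothesis} into a single estimate by a real-interpolation (dyadic) decomposition of the inputs, expand $T$ multilinearly, and in every resulting term send the ``largest'' input into the space $X_1$ while keeping the remaining $n-1$ inputs in $X_0$.

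\emph{Decomposition and multilinear expansion.} For each $i$ one applies the $J$-method of real interpolation to $f_i$ at the parameter $\theta=1/n$, obtaining a decomposition $f_i=\sum_{k\in\bbZ}f_i^{k}$ into ``scale-$2^k$'' pieces together with a nonnegative scalar array $a_i=(a_i^{k})_k$ such that the $X_0$- and $X_1$-sizes of $f_i^{k}$ are controlled by $2^{k/n}a_i^{k}$ and $2^{-k(n-1)/n}a_i^{k}$ respectively (with the weight appropriate to $\ell^r$ in the sequence variable), and such that the $\ell^{nr}$-type norm of $a_i$ is comparable to $\|f_i\|_{\ell^{nr}_\sigma(\overline X_{1/n,nr})}$; the exponents $1/n$ and $(n-1)/n$ reflect that exactly one of the $n$ slots will be placed in $X_1$ and the other $n-1$ in $X_0$. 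By multilinearity $T(f_1,\dots,f_n)=\sum_{k_1,\dots,k_n}T(f_1^{k_1},\dots,f_n^{k_n})$. For a fixed multi-index $k$, let $\mu$ achieve $\max_j k_j$ and apply \eqref{Thypothesis} to a permutation that places slot $\mu$ in the $X_1$-position. Since the hypothesis is assumed for every permutation, one is moreover free to take a geometric mean over the admissible assignments of the weights $\delta_1,\dots,\delta_n$ to the slots, which replaces them by their common average $\sigma$. Together with the above bounds this yields, for each $k$,
\[
\|T(f_1^{k_1},\dots,f_n^{k_n})\|_V\le C\,2^{\kappa(k)}\prod_{i=1}^n A_i(k_i),\qquad \kappa(k)=\tfrac1n\sum_{i=1}^n\bigl(k_i-\max_j k_j\bigr)\le 0,
\]
where the $A_i(m)$ are scalar quantities built from the scale-$m$ pieces of $f_i$, and $\kappa(k)$ produces geometric decay away from the diagonal $k_1=\dots=k_n$.

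\emph{Summation.} Using the $r$-convexity of $V$ (uniformly in the number of pieces), one passes the sum inside:
\[
\|T(f_1,\dots,f_n)\|_V^r\le C\sum_{k_1,\dots,k_n}2^{r\kappa(k)}\prod_{i=1}^n A_i(k_i)^r ,
\]
and the remaining task is the purely scalar estimate of this multiple series by $C\prod_i\|f_i\|_{\ell^{nr}_\sigma(\overline X_{1/n,nr})}^r$. The idea is to sum first over the $n-1$ ``off-diagonal'' directions of the $k$-lattice, where the factor $2^{r\kappa(k)}$ decays geometrically, and then over the remaining ``diagonal'' direction by H\"older's inequality with exponent $n$; this last step is exactly where the summability exponent is boosted from $r$ to $nr$ and where the parameter $1/n$ and the exponent $nr$ of the interpolation space appear. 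The hypothesis that $\delta_2,\dots,\delta_n$ are not all equal is invoked precisely here: after the permutation-averaging it guarantees a strictly negative geometric rate in the series one has to sum, so that these converge rather than merely diverging logarithmically.

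\emph{Main obstacle.} The two conceptual ingredients --- the $J$-decomposition and routing the dominant input through $X_1$ --- are routine. The real work, and the main obstacle, lies in the bookkeeping: making the permutation-averaging precise so that the symmetric weight $\sigma$ and the space $\overline X_{1/n,nr}$ emerge exactly, and organizing the multi-index summation so that all exponents match while the $r$-convexity constant is used uniformly in the number of pieces. Since this is precisely the lemma proved in \cite{BOS3} --- a variant of the interpolation theorem of \cite{BOS1}, itself a descendant of Christ's multilinear trick \cite{Ch} --- I would reproduce that argument with only cosmetic changes.
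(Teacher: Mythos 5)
The paper itself contains no proof of Theorem \ref{multtrick}: Section \ref{interpolation} simply imports it from \cite{BOS3}, where it is a variant of the $r$-convex interpolation lemma of \cite{BOS1}, descended from Christ's multilinear trick \cite{Ch}. Your outline is the same argument as in those sources --- dyadic (J-method) decomposition of each input at $\theta=1/n$, multilinear expansion, routing the dominant piece through the $X_1$ slot, geometric decay off the diagonal plus H\"older with exponent $n$ on the diagonal, and $r$-convexity of $V$ to sum the pieces --- so there is nothing in the paper to compare it against beyond the citation, and your plan is consistent with the cited proof.

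One caveat if you were to write it out: the step ``take a geometric mean over the admissible assignments of the weights $\delta_1,\dots,\delta_n$, which replaces them by their common average $\sigma$'' cannot be performed at the level of full norms. By H\"older one has $\|f\|_{\ell^r_{\sigma}(X)}\le \prod_{j}\|f\|_{\ell^r_{\delta_j}(X)}^{1/n}$, i.e.\ the geometric mean of the differently weighted norms \emph{dominates} the $\sigma$-weighted norm, which is the wrong direction for the conclusion. The averaging must instead be done after decomposing each sequence into single-index (or single-block) pieces, for which the $\ell^r_{\delta}$ norm is just $2^{\delta m}$ times an $X$-norm and hence the weights combine multiplicatively term by term; the hypothesis $\delta_i\neq\delta_j$ for some $i,j\ge 2$ is then exploited by comparing two assignments differing by a transposition of those two $X_0$-slots, yielding a factor of the order $2^{-|\delta_i-\delta_j|\,|m_i-m_j|}$ in the off-diagonal sequence directions, and the weight $\sigma$ together with the exponent $nr$ only emerges from the final diagonal summation by H\"older. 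This, together with keeping the sequence index and the $J$-decomposition scale as two separate lattice parameters, is precisely the bookkeeping you defer to \cite{BOS3}; with it made explicit, your plan is the proof.
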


\section{Proof of Theorem \ref{comppolthm}}


{\sl About this section.} We will assume that the conclusion \eqref{Jmax} of Lemma \ref{J-simplepoly} is valid for a given $d\ge 3$, and then formally deduce from this assumption
the $d$-dimensional version of \eqref{affrestr-d3}, which is in the same form as \eqref{affrestr-d}. Actually, we will prove the dual estimate \eqref{T-Q}. Since Lemma \ref{J-simplepoly} has been established for $d=3$, this shows Theorem \ref{comppolthm}. We decided to present the proof in this way, showing most steps in general dimension $d\ge 3$, since they are needed again in the next section to prove Theorem \ref{compthm} for all $d\ge 3$.

\medskip

{\sl Offspring curves.}
Write $\gamma(z)= (z, z^2, \cdots, z^{d-1}, \phi(z))$, where $\phi(z) = \sum_{i=0}^N \alpha_i \, z^i$, $\alpha_i \in \bbC$.
Let us put
\begin{equation}\label{offsp}
\Gamma(z) = (P_1(z), \cdots, \, P_{d-1}(z), \, \phi (z))
\end{equation}
where $P_j(z)= z^j+$ {\it lower order terms}, and $\phi (z) = \sum_{i=0}^N \alpha_i \,z^i$ with some new coefficients $\alpha_i \in \bbC$.

The expression $\Gamma (z)$ is an analogue of the `offspring curves' in the terminology of \cite{D1} and \cite{DM1}. For instance, if $\Gamma(z)$ is as above with $|\alpha_i| \le 1$ and $|h_j| \le 2$, for $1\le j\le d$, then the expression $\Gamma_1 (z,h) = d^{-1} \sum_{j=1}^d \Gamma(z+h_j)$ is again in the form \eqref{offsp}, and the coefficients $\widetilde{\alpha}_i$ of the last component $\phi_1(z)$ of $\Gamma_1 (z,h)$ satisfy $|\widetilde{\alpha}_i| \le C(d,N)$ for some constant $C(d,N)$.
(See \eqref{Gzh} below.)

\medskip





{\sl Two crucial lower bounds.}
As in \cite{BOS3} (see Section 4 there), the following two lower bounds will play crucial roles here. The first concerns the (real) {\it Jacobian} of the transformation $(z_1, \cdots, z_d) \mapsto \sum_{j=1}^d \Gamma(z_j)$, considered as a real mapping, and the second is about the {\it torsion} $\tau(z,h)$ of the offspring curves given by $z\mapsto \Gamma(z,h) = \sum_{j=1}^d \Gamma(z+ h_j)$.

\medskip

\noindent
(i) {\sl The Jacobian bound:}
\begin{align}\label{Jacbound}
J_\bbR (z_1, \cdots , z_d) \ge c(d,N) \, V(z_1, \cdots , z_d)^2 \max_{j=1,\cdots ,d} w(z_j)^\frac{d^2+d}{2} .
\end{align}
%

\noindent
(ii) {\sl The torsion bound:}
\begin{equation}\label{torbound}
 |w(z, h_2,\cdots, h_d)| = |\tau(z,h)|^{4/(d^2+d)} \ge c(d,N) \max_{j=1,\cdots,d} w (z+ h_j)
\end{equation}
for $z_j = z+h_j \in B$, whenever $B$ is one of the sets in Lemma \ref{J-simplepoly}.
Here, $h=(h_2, \cdots, \, h_d)$ with $h_j \in \bbC$, $h_1=0$, and $w(z)$ is given by \eqref{compwt} with $\gamma(z)$ replaced by $\Gamma(z)$.

These are \eqref{Jacobian-a}
and \eqref{tauzh} below, respectively. The precise statements can be found there.
We emphasize that for our argument to work (more precisely, for the use of Lemma \ref{multtrick} to be valid), at least one of these two lower bounds must be in the stronger form where, on the right-hand side of the inequality, instead of the usual {\it geometric mean} the {\it arithmetic mean} (or equivalently, the {\it maximum} as written above) of the relevant terms is used.

\medskip
The following proof is an adaptation of an argument used already in \cite{BOS3}. It is arranged somewhat differently here, because unlike in \cite{BOS3} we cannot assume that the result is known for the `nondegenerate' case in this context. Thus, both the nondegenerate and degenerate cases are treated simultaneously here. We give the proof in some detail, for some of the necessary changes may not be obvious. But our presentation will be somewhat sketchy at places. We refer the reader to sections 4 and 5 of \cite{BOS3} for more details on such points.

Observe that it suffices to consider the case $N\ge d$, since for $0\le N < d$, we have $\gamma^{(d)}(z) \equiv 0$, and so $w(z) \equiv 0$ and there is nothing to prove.
By a scaling argument it suffices to prove the estimate for functions $f$
supported in a fixed ball, say, $B(0,1)$ in $\bbC$ or $\bbR^2$.


Define
\[ w (z) = |\det (\Gamma '(z), \,\Gamma''(z), \cdots,\, \Gamma^{(d)}(z))|^{4/(d^2+d)} .\]
A calculation shows that
\begin{equation}\label{wbpower}
w(z)^{\frac{d^2+d}{4}} = c_d \, |\phi^{(d)}(z)|
\end{equation}
where $c_d = 2! \cdots (d-1)!$.

Now, for $\la >1$, define
\begin{equation}\label{Tla}
T_\la^{\Gamma} f(x)=  \psi(x) \int_{B(1)} e^{i\la x\cdot
\Gamma (z)} f(z) \, w (z) d\mu(z), \quad x\in \bbR^{2d} ,
\end{equation}
where $\psi(x)$ is a nonnegative cutoff function and $B(r) = B(0,r)$, $r>0$.

Put $Q =q_d = (d^2+d+2)/2$, and define
\begin{equation}\label{Al}
A_\la = \la^{2d/Q} \cdot \sup_{\Gamma}\| T_{\la}^{\Gamma}
\|_{L^{Q}(B(1) , \, w d\mu )\rightarrow L^{Q,\infty}(\bbR^{2d})}
\end{equation}
where the supremum is taken over all offspring curves $\Gamma$ as in \eqref{offsp} with $|\alpha_i|\le 1$.
%
(Notice that the cutoff function $\psi(x)$ in \eqref{Tla} may be replaced by a translation $\psi(x - x_0)$ without affecting the norm bound, since a factor of the form $e^{i\la x_0 \cdot \Gamma (z)}$ may be absorbed into the function $f(z)$.)

\medskip

Let us first see that $A_\la < \infty$ for each $\la >1$. By
H\"older's inequality and \eqref{wbpower} we have
\begin{align*}
 \| w \|_{L^1(B(1) , \, d\mu )} &\le |B(1)|^{\frac{d^2+d-4}{d^2+d}}\cdot \| w^{\frac{d^2+d}{4}} \|_{L^1(B(1) , \, d\mu
)}^{\frac{4}{d^2+d}} \\
&\le | B(1) |^{\frac{d^2+d-4}{d^2+d}} \cdot (c_d)^{\frac{4}{d^2+d}} \| \phi^{(d)}\|_{L^1(B(1) , \, d\mu
)}^{\frac{4}{d^2+d}} \le C_{d,N}
\end{align*}
for some constant $C_{d,N}$ uniform in the coefficients $\alpha = (\alpha_0, \cdots, \alpha_N)$ with
$|\alpha_i|\le 1$, $0\le i\le N$.

So, by H\"older's inequality we
obtain
$$ \| f\|_{L^1 (B(1), \, w d\mu )} \le
\| w \|_{L^1(B(1) , \, d\mu )}^{1/Q'} \| f\|_{L^{Q}(B(1),
\, w d\mu )} \le C_{d,N}^{1/Q'} \| f\|_{L^{Q}(B(1), \, w d\mu
)} .$$
Since $| T_{\la}^{\Gamma} f (x)| \le |\psi(x)| \cdot \| f\|_{L^1 (B(1) , \, w d\mu )}
$, the last inequality implies that
\begin{align*}
\| T_{\la}^{\Gamma} f \|_{L^{Q,\infty}(\bbR^{2d})} &\le \|
\psi \|_{L^{Q,\infty}(\bbR^{2d})} \| f\|_{L^1 (B(1) , \, w d\mu )}\\
& \le \|\psi \|_{L^{Q,\infty}(\bbR^{2d})} \cdot C_{d,N}^{1/Q'} \| f\|_{L^{Q}(B(1) , \, w d\mu
)} .
\end{align*}
Hence, it follows that for each $\la >1$,
\begin{equation}\label{Alafinite} A_\la \le \la^{2d/Q}\cdot
C_{d,N}^{1/Q'} \|\psi\|_{L^{Q,\infty}(\bbR^{2d})}  < \infty .
\end{equation}

Our goal is to show that $A_\la \le C(d,N)$, independent of $\la
>1$. This, in turn, would imply that
\begin{equation}\label{Tla-est}
\| T_{\la}^{\Gamma} f \|_{L^{Q,\infty}(\bbR^{2d})} \le C(d,N) \,
\la^{-2d/Q} \| f\|_{L^{Q}(B(1), \, w d\mu )}, \quad \la >1
\end{equation}
uniformly in $\alpha = (\alpha_0, \cdots, \alpha_N)$ with
$|\alpha_i|\le 1$, $0\le i\le N$, if $f$ is supported in $B(1)$.
Assuming \eqref{Tla-est}, it is easy to finish the proof of
\eqref{T-Q}. First we take $\Gamma (z) = \gamma(z)$.
Then we make a change of variables $x\mapsto \la^{-1} x$ to remove
the factor $\la^{-2d/Q}$, and next we take the limit as $\la \rightarrow
\infty$ to remove the cutoff function $\psi(x)$. Finally, summing over the $B$'s, we obtain
\eqref{T-Q} for $f$ supported in $B(1)$. Then a scaling argument extends \eqref{T-Q} to functions $f$ supported in $\bbC$.

\medskip

It remains to show $A_\la \le C(d, N)$ and \eqref{Tla-est}. Fix $\la >1$. Also fix $\Gamma(z)$ as in \eqref{offsp} with $\alpha =(\alpha _0, \cdots, \alpha _N)$, $\alpha _i \in \bbC$ and $|\alpha _i|\le 1$, $0\le i\le N$. Let $|h_j| \le 2$, $1\le j\le d$.
Put $\Gamma(z,h) = \sum_{j =1}^d \Gamma (z+h_j)$. Then
$\Gamma(z,h)$ is in the form
\begin{equation}\label{Gzh}
\Gamma(z,h) = ( d \cdot P_1(z), \cdots, \, d \cdot P_{d-1}(z),\, \phi_1(z))
\end{equation}
where the $P_j(z)$ are as in \eqref{offsp}, with the leading coefficient 1, but some new coefficients for the lower order terms, and $\phi_1(z) = \sum_{i=0}^N \tilde{a}_i \, z^i$ with $|\tilde{\alpha}_i | \le c_* = C(d,N)$. The constants $d, \cdots, \, d, c_*$ and $c_*$ can be factored from $\Gamma (z,h)$ and incorporated into $x$. Namely, we may rewrite
 \[ x\cdot  \Gamma(z,h) = y \cdot \Gamma_1 (z,h) .
 \]
 Here, $y= (d \,x_1, \cdots, d\, x_{d-2}, c_* \,x_{d-1}, c_* \,x_{d})= x L$, where $L$ is a $d\times d$ diagonal matrix, and
 \[ \Gamma_1 (z,h) = ( P_1(z), \cdots, \,P_{d-1}(z), \, c_*^{-1} \, \phi_2(z))
  \]
is an offspring curve as in \eqref{offsp}, of which the last component has coefficients $\widetilde{\alpha_i}$ with $|\widetilde{\alpha_i}| \le 1$.
The change of variables $x\mapsto y$ changes the cutoff function to
\[ \psi(y \,L^{-1}) = \psi\Big( {y_1\over d}, \cdots, \,{y_{d-2}\over d}, \,
{y_{d-1}\over c_*}, \,{y_{d}\over c_*}\Big) .
\]
Since $\psi(y \,L^{-1})$ is bounded by the sum of no more than $C(d,N)$ translates of $\psi(y)$, we may apply the definition of
$A_\la$. This only increases the constant by a factor $C(d,N)$.

\medskip



By writing $B(1) = B(0,1)$ as a union of the sets $B(1) \cap B$, where the $B$ are as in
Lemma \ref{J-simplepoly},
we may assume that $f$ is supported in $B$. We may also assume that $B\subset B(1)$. (Otherwise, replace $B$ with $B(1) \cap B$.)
Thus, we may rewrite
\begin{equation}\label{Tla-n}
T_\la^{\Gamma} f(x)=  \psi(x) \int_{B} e^{i\la x\cdot
\Gamma (z)} f(z) \, w (z) \, d\mu(z), \quad x\in \bbR^{2d} .
\end{equation}
Let us put
\begin{align*}\label{}
&M_{\la} (f_1, \cdots, f_d)(x) = \prod_{j=1}^d (T_{\la}^{\Gamma}
f_j)(x) = \\
&= \psi(x)^d \int_{B^d} e^{i\la x\cdot \sum_{i=1}^d
\Gamma(z_j)} \prod_{j=1}^d [(f_j \, w)(z_j) ]\ d\mu(z_1)\cdots
d\mu(z_d) \\
&= \psi(x)^d \int_{B(2)^{d-1}} \int_{B_h} e^{i\la x\cdot
\Gamma(z,h)} \prod_{j=1}^d [(f_j \, w)(z+ h_j)]\ d\mu(z)\,
d\mu(h_2)\cdots d\mu(h_d) .
\end{align*}
Here, $B_h$ is the intersection of the sets $B - h_j$ (translates of $B$)
over the indices $j=1, \cdots, d$.

Next, as in \cite{BL} we define the decomposed operators
\begin{equation*}\label{}
M_{\la, k} (f_1, \cdots, f_d)(x) =
\end{equation*}
\begin{equation}\label{multint}
\psi(x)^d \int_{S_k} \int_{B_h} e^{i\la x\cdot
\Gamma(z,h)} \prod_{j=1}^d [(f_j \, w)(z+ h_j)]\ d\mu(z)\,
d\mu(h_2)\cdots d\mu(h_d)
\end{equation}
where $S_k = {\{h\in B(2)^{d-1}:\ 2^{-k-1} < v(h) \le 2^{-k} \}}$, for $k\in \bbZ$, and
$v(h) = |V(z+ h_1, \cdots, z+ h_d)| = \prod_{1\le i<j \le d} |h_j - h_i| $ is
the Vandermonde determinant.

\medskip



{\sl An estimate at $q=Q$.}
By the considerations about $\Gamma(z,h)$ given in the paragraph containing \eqref{Gzh} and from the definition \eqref{Al} of $A_\la$, it follows that
\begin{equation}\label{osest}
\Big\| \psi(x) \int_{B_h} e^{i\la x\cdot \Gamma(z,h)}
f(z) \cdot w(z,h) \ d\mu(z) \Big\|_{Q,\infty} \lc \, \la^{-
{2d\over Q}} A_\la \| f \|_{L^Q (B_h,\, w(z,h) d\mu )}
\end{equation}
uniformly in $h$. Here, $w(z,h) = |\tau(z,h)|^{4/(d^2+d)}$, and
\[ \tau(z,h) = \det(\Gamma'(z,h), \cdots ,\Gamma^{(d)}(z,h)) .\]
We have
\begin{equation*}\label{}
|\tau(z,h)|
= d^{d-1} 2!\cdots(d-1)! \cdot \Big| \sum_{j=1}^{d} \phi^{(d)}(z+h_j)\Big| .
\end{equation*}
Thus, as in the proof of Lemma \ref{J-simplepoly} (or Lemma \ref{jacest}) we obtain
\begin{equation}\label{tauzh}
 |\tau(z,h)| \ge c_{d,N} \sum_{i=1}^d \big|\phi^{(d)}(z+h_i)\big| \ge c_{d,N} \max_{1\leq i \leq d} w (z+ h_i)^{(d^2+d)/4}
\end{equation}
for $z+h_i \in B$. Here we used \eqref{wbpower}. Now set
\begin{equation*}\label{}
w_* (z,h) := \prod_{i=1}^d w(z+ h_i)^{a_i}
\end{equation*}
for some constants $a_i \in [0,1]$ with $\sum_{i=1}^d a_i =1$. We
will choose $a_i$ suitably later so that
the condition $\delta_2
\not= \delta_3$ (the $\delta_j$ are to be defined later) is satisfied, which will allow us to apply the interpolation theorem, Theorem \ref{multtrick}. (See the paragraph containing \eqref{ellq} below.)
Thus, as was mentioned in the first paragraph of this section, the fact that we have an arithmetic mean instead of a geometric mean as the lower bound in \eqref{tauzh} plays a key role in our argument.

The inequality \eqref{tauzh} implies that
\begin{equation}\label{ostor}
w(z,h) \ge c \, w_*(z,h)
\end{equation}
for $z \in B_h$, where $c = c(d,N) >0$ is a constant independent of $z$, $h$.
(Recall that $B_h$ is the intersection of the sets $- h_j + B$, $j=1, \cdots, d$.)

If we write $w_* (z,h) = g(z,h) w(z,h)$ for some nonnegative function $g(z,h) \le c$, then
we may apply \eqref{osest} with $f(z,h)$ replaced by $f(z,h) g(z,h)$. Since $g(z,h)^Q \le C g(z,h)$, this gives
\begin{equation*}\label{}
\Big\| \psi(x) \int_{B_h} e^{i\la x\cdot \Gamma(z,h)}
f(z) \cdot w_*(z, h) \ d\mu(z) \Big\|_{Q,\infty} \le C \la^{-{2d\over Q}}
A_\la \| f \|_{L^Q (w_*(z, h) d\mu )} .
\end{equation*}
(See Observation 5.1 in \cite{BOS3} for more on this argument.)

It follows then from an analogue of Minkowski's inequality, by using an equivalent `norm' on $L^{Q,\infty}$ for this purpose (see Section 4 of \cite{BOS3}), that
\begin{equation*}\label{}
\| M_{\la, k} (f_1, f_2,\cdots, f_d)\|_{Q,\infty} \le
\end{equation*}
\begin{align*}\label{}
\le C &\int_{S_k} \Big\| \psi(x) \int_{B_h} e^{i \la
x\cdot
\Gamma(t,h)} \times \\
&\times \prod_{j=1}^d [f_j(z + h_j) w(z + h_j)^{1- a_j}] \cdot
w_*(z,h) \ d\mu(z) \Big\|_{Q,\infty} \ d\mu(h_2) \cdots d\mu(h_d)
\end{align*}
\begin{equation*}\label{}
\le C \, \la^{-{2d\over Q}} A_\la \int_{S_k} \Big\| \prod_{j=1}^d [f_j(z +
h_j) w(z + h_j)^{1- a_j}] \Big\|_{L^Q (w_* (z,h) d\mu )} d\mu(h_2)
\cdots d\mu(h_d)
\end{equation*}
\begin{equation*}\label{}
= C \, \la^{-{2d\over Q}} A_\la \int_{S_k} \Big\| \prod_{j=1}^d \Big[ f_j(z+
h_j) w(z + h_j)^{1- {a_j\over Q'}} \Big] \Big\|_{L^Q (d\mu )} d\mu(h_2)
\cdots d\mu(h_d) .
\end{equation*}

We will now apply H\"older's inequality to bound the inner norm and also use the sublevel set estimate in Lemma \ref{VdMbound} with $u=2^{-k}$.
This gives
\begin{equation}\label{Eq}
\| M_{\la, k} (f_1, \cdots, f_d)\|_{Q,\infty} \le C\, \la^{-{2d\over Q}}
A_\la \cdot 2^{-{4k\over d}}  \prod_{j=1}^d \Big\| f_j w^{1- {a_j\over Q'}}
\Big\|_{L^{q_j}(d\mu )}
\end{equation}
where $\sum_{j=1}^d 1/q_j = 1/Q$ for some numbers $q_j$, $1\le q_j\le \infty$, to
be chosen later.

Let us now put
$$ \Omega_i = \{ z\in \bbC: ~ 2^{i-1} \le w (z) < 2^i \}, \quad i\in \bbZ . $$
The triangle inequality implies that
\[ \|f \, w^{\alpha} \|_{L^{p}(d\mu )} = \|\sum_{i\in
\bbZ} \chi_{\Omega_i} f \, w^{\alpha} \|_{L^{p}(d\mu )} \le C
\sum_{i\in \bbZ} 2^{i \alpha} \|f \chi_{\Omega_i} \|_{L^{p}(d\mu
)} , \text{  for  } \alpha \in \bbR .\]
Hence, it follows that
\begin{equation}\label{asymm}
\| M_{\la, k} (f_1, \cdots, f_d)\|_{Q,\infty} \le C \la^{-{2d\over Q}} A_\la
\cdot 2^{-{4k\over d}}  \prod_{j=1}^d \| f_j \|_{\ell^1_{\alpha_j}
(L^{q_j}(d\mu ))}
\end{equation}
where we put $\alpha_j = 1 - a_j/Q'$. Here the expression $\|
f\|_{\ell^p_{\alpha} (X)}$ stands for
$$ \| \{f \chi_{\Omega_i}\}\|_{\ell^p_{\alpha} (X)} =
\Big(\sum_{i\in \bbZ} \big[ 2^{\alpha i} \| f
\chi_{\Omega_i}\|_{X} \big]^p \Big)^{1/p}  $$
where $X$ is a Banach space (or a complete quasi-normed space) of functions
on $\bbR^2$. Thus, we identify $f$ with the sequence $\{f
\chi_{\Omega_i}\}_{i\in \bbZ}$.

\medskip

{\sl An $L^2$ estimate.}
Next, it follows from B\'ezout's theorem
that the transformation $(z,h_2, \cdots, h_d) \mapsto \Gamma(z, h_2,\cdots, h_d)$ has bounded generic multiplicity $\le N \cdot (d-1)!$.
By Proposition 1.4.10 on p. 51 in \cite{Kr}, the Jacobian of this
transformation as a real mapping is given by
$$ J_\bbR (z_1, \cdots, z_d)= |J_\bbC (z_1, \cdots, z_d)|^2
= |\det(\Gamma'(z+h_1), \cdots, \Gamma'(z+h_d))|^2 $$
for $z_j = z+h_j \in B$.
Here, $J_\bbC (z_1, \cdots, z_d)$ (or $J_\bbC (z, h_2,\cdots, h_d)$) denotes the determinant of the holomorphic Jacobian matrix for
the transformation $(z,h) = (z,h_2,\cdots, h_d) \mapsto \Gamma(z, h) = \sum_{j=1}^d \Gamma(z+h_j)$.

For instance, when $d=3$, we have
\begin{equation}\label{phi-rep}
 J_\bbC (z_1, z_2, z_3) = \int_{z_1}^{z_2} \int_{z_2}^{z_3} \int_{s_1}^{s_2} \phi'''(z) \, dz\, ds_2 \,ds_1 .
\end{equation}
(For higher dimensions there is a similar representation, defined recursively, which involves integrals of $\phi^{(d)}(z)$. See \cite{BOS2, DeW, DeFW}.)

Hence, by our assumption that Lemma \ref{J-simplepoly} holds for $d\ge 3$, it follows that
\begin{align}\label{Jacobian-a}
J_\bbR (z, h_2,\cdots, h_d) \gc v(h)^2 \cdot {1\over d}\sum_{j=1}^d w(z+ h_j)^{d^2+d \over 2} \ge v(h)^2 \prod_{j=1}^d w(z+ h_j)^{d^2+d \over 2d}
\end{align}
if $z+ h_j \in B$. (See also Remark \ref{rem-Gamma}.)
Here the implied constant $c = c_{d,N} >0$ depends only on $d$ and $N$.

Next, we change variables in the integral \eqref{multint} and use the Plancherel theorem. Then we reverse the change of variables and use
\eqref{Jacobian-a} and the sublevel set estimate in Lemma \ref{VdMbound} to
obtain
\begin{align*}\label{}
&\| M_{\la, k} (f_1, \cdots, f_d)\|_2 \le C\la^{-d} \times\\
&\times \left(\int_{S_k} \int \prod_{j=1}^d | (f_j\, w)(z+h_j) |^2 \, J_\bbR (z, h)^{-1} d\mu (z) d\mu (h_2)\cdots d\mu (h_d) \right)^{1/2} \\
&\qquad \le C\la^{-d} \left(\int_{S_k} \int \prod_{j=1}^d | (f_j \,w^a)(z+h_j) |^2 \,v(h)^{-2} d\mu (z) d\mu (h_2)\cdots d\mu (h_d) \right)^{1/2} \\
&\qquad \le C \la^{-d}
\,2^k 2^{-\frac{2k}{d}} \| f_1 \,w^a \|_{L^2 (d\mu )} \prod_{j=2}^d \| f_j\, w^a \|_{L^{\infty}(d\mu )}
\end{align*}
for $a=(3-d)/4$.

By permuting the variables and interpolating the resulting estimates one gets
\begin{equation*}\label{}
\| M_{\la, k} (f_1, \cdots, f_d)\|_2 \le C \la^{-d}
\,2^{\frac{k(d-2)}{d}} \prod_{j=1}^d \| f_j \,w^a \|_{L^{r_j}(d\mu )}
\end{equation*}
for some numbers $1\le r_j \le
\infty$, to be chosen later, such that $\sum_{j=1}^d r_j^{-1} = 2^{-1}$.
Using the triangle inequality on each norm again gives
\begin{equation}\label{E2}
\| M_{\la, k} (f_1, \cdots, f_d)\|_2 \le C \la^{-d}
\,2^{\frac{k(d-2)}{d}} \prod_{j=1}^d \| f_j \|_{\ell^1_a (L^{r_j}(d\mu ))} .
\end{equation}

{\sl Summation of the estimates.}
By estimating the distribution function of the sum of $M_{\la, k}(f_1, \cdots, f_d)(x)$ over $k$,
using \eqref{asymm} and \eqref{E2},
we obtain the estimate
\begin{align*}\label{}
\Big| \Big\{ &\Big|\sum_{k=-\infty}^\infty M_{\la, k} \Big| > 2\alpha  \Big\} \Big| \le \Big| \Big\{ \Big| \sum_{2^k > \beta} M_{\la, k} \Big| > \alpha \Big\} \Big| + \Big| \Big\{ \Big| \sum_{2^k \le \beta} M_{\la, k} \Big| > \alpha \Big\} \Big| \\
& \le \la^{-2d} \, \Big({CA_\la\over \alpha}\Big)^Q \beta^{-{4Q\over d}} \prod_{j=1}^d \| f_j \|_{\ell^1_{\alpha_j} (L^{q_j})}^Q + \la^{-2d} {C^2 \over \alpha^2} \beta^{\frac{2(d-2)}{d}} \prod_{j=1}^d \| f_j \|_{\ell^1_a (L^{r_j})}^2
\end{align*}
for $\beta >0$. Choosing the value
\[  \beta = \Big( \alpha^{2-Q} \, A_\la^Q \,
 \, \prod_{j=1}^d \Big[\| f_j \|_{\ell^1_{\alpha_j} (L^{q_j}(d\mu ))}^Q \| f_j \|_{\ell^1_a (L^{r_j}(d\mu ))}^{-2}\Big] \Big)^{\frac{d}{2(d-2+2Q)}}
\]
yields that
\[ \| M_\la (f_1, \cdots, f_d) \|_{Q/d, \infty} \le C \, \la^{-\frac{2 d^2}{Q}} \, A_\la^\frac{d-2}{d+2} \,
\prod_{j=1}^d \| f_j \|_{\ell^1_{\alpha_j} (L^{q_j}(d\mu ))}^\frac{d-2}{d+2}  \| f_j \|_{\ell^1_a (L^{r_j}(d\mu ))}^\frac{4}{d+2}.
\]
Here we used the fact that $d-2+2Q = d(d+2)$ and $Q = (d^2 + d +2) /2$.
%
%

By Lemma A.3 in \cite{BOS3}, this implies that
\begin{align*}\label{}
\| M_{\la} (f_1, \cdots, f_d)\|_{Q/d,\infty} \le & C \, \la^{-\frac{2 d^2}{Q}}\,A_\la^{\frac{d-2}{d+2}} \times \\
&\times \prod_{j=1}^d \| f_j\|_{\big(\ell^1_{\alpha_j} (L^{q_j}(d\mu
)), \, \ell^1_a (L^{r_j}(d\mu )) \big)_{\frac{4}{d+2},1}}.
\end{align*}
From Lemma A.4 in \cite{BOS3}, we have
\[ \big( \ell^1_{\alpha_j} (L^{q_j}(d\mu )),
\, \ell^1_a (L^{r_j}(d\mu )) \big)_{\frac{4}{d+2},1} = \ell^1_{\beta_j}
\big( L^{p_j,1}(d\mu ) \big)
\]
where
\begin{equation*}\label{p1}
{1\over p_j} = {d-2 \over d+2}\ {1\over q_j} + {4\over d+2}\ {1\over r_j}~ \text{ and }~
\beta_j = {d-2 \over d+2}\ \alpha_j + {4\over d+2}\ a .
\end{equation*}
Thus, we obtain
\begin{align}\label{first}
\Big\| \prod_{j=1}^d T_\la^{\Gamma} & f_j \Big\|_{Q/d,\infty} \le
C  \la^{-\frac{2 d^2}{Q}} \, A_\la^{\frac{d-2}{d+2}} \times \\
& \times \| f_1\|_{\ell^1_{\beta_1} (L^{p_1,1}(d\mu ))}
\| f_2 \|_{\ell^1_{\beta_2} (L^{p_2,1}(d\mu ))}
\prod_{j=3}^d  \| f_j \|_{\ell^1_{\beta_j} (L^{p_j,1}(d\mu ))} . \notag
\end{align}

On the other hand we can get an alternative estimate by
taking $q_j = d Q$ and $\alpha_j = 1-1/(d Q')$ for all $j$ in
\eqref{asymm}, and also taking all $r_j=2d$ in \eqref{E2}. Then taking all $f_j = f$ gives
\begin{equation}\label{second}
\big\| T_\la^{\Gamma} f \big\|_{Q,\infty} \le
C \la^{-\frac{2 d}{Q}}\,A_\la^{\frac{d-2}{d(d+2)}} \| f
\|_{\ell^1_{\delta_0} (L^{Q,1} (d\mu ))}
\end{equation}
where $\delta_0 = 1/Q$.

\medskip

{\sl Preparation for interpolation.} We will now consider the $n$-linear symmetric operator $\prod_{j=1}^n T_\la^{\Gamma} f_j$ with some $n > Q$. Then we need to estimate its $L^{r,\infty}$ quasi-norm with $r=Q/n <1$. This is to take advantage of the $r$-convexity of this space. (See Section \ref{interpolation} and the footnote 3 there.) For simplicity of notation, let us take $n=dQ$.
By applying a variant of H\"older's inequality ({\it cf.} (2.1) in
\cite{BOS1}), using \eqref{first} for the first $d$ factors and \eqref{second} for the rest, we
get
\begin{align*}\label{}
\Big\| \prod_{j=1}^{dQ} T_\la^{\Gamma} f_j \Big\|_{1/d ,\infty}
& \le C (dQ)^d \, \la^{-2 d^2} \,A_\la^{Q\frac{d-2}{d+2}}
\| f_1 \|_{\ell^1_{\beta_1} (L^{p_1,1})} \| f_2 \|_{\ell^1_{\beta_2} (L^{p_2,1})}\times \\
&\qquad \times  \prod_{j=3}^d  \| f_j \|_{\ell^1_{\beta_j} (L^{p_j,1})} \prod_{j=d+1}^{dQ}
\| f_j \|_{\ell^1_{\delta_0} (L^{Q,1})} .
\end{align*}

Now we may choose $q_1, \cdots, q_d$, and $r_1, \cdots ,r_d$
(hence also $p_1, \cdots, p_d$) such that $p_1 \not= p_2$, with
$p_2$ strictly between $p_3$ and $Q = (d^2 + d + 2)/2$, and also that $ p_3 = \cdots = p_d$ and
\begin{align}\label{p2} {1\over p_2} = {d-2 \over d Q -2} {1\over p_3} + {d(Q-1) \over dQ-2} {1\over Q} .
\end{align}
Note that we have then also
$$ {1\over d} \Big({1\over p_1} + {1\over p_2} + \cdots + {1 \over p_d}\Big) = {1\over Q} .$$
(In fact, we may choose $q_j$ and $r_j$ such that $1/p_3 = 1/Q-\varepsilon$
for some small $\varepsilon \not= 0$.
Also take $1/p_2 = 1/Q - (d-2)\varepsilon/(dQ-2)$ and $1/p_1 = 1/Q + (dQ-1)(d-2)\varepsilon/(dQ-2)$.
These choices satisfy the requirements listed above.)

Put $r=1/d$ and bound each quasi-norm above of the form
$\|\cdot\|_{\ell^1_{\rho} (L^{p,1})}$  by the quasi-norm
$\|\cdot\|_{\ell^r_{\rho} (L^{p,r})}$. With $f_1$, $f_2$ fixed, let us permute the remaining functions and take generalized geometric means of the resulting estimates to get
\begin{align*}\label{}
\Big\| \prod_{j=1}^{dQ} T_\la^{\Gamma} & f_j \Big\|_{1/d ,\infty}
\le C\la^{-2 d^2}
\,A_\la^{Q\frac{d-2}{d+2}} \times \\
&\times \| f_1 \|_{\ell^r_{\beta_1} (L^{p_1,r})} \| f_2
\|_{\ell^r_{\beta_2} (L^{p_2,r})} \prod_{j=3}^{dQ} \| f_j
\|_{\ell^r_{\beta_j} (L^{p_j,r})}^{d-2 \over dQ-2}\ \| f_j
\|_{\ell^r_{\delta_0} (L^{Q,r})}^{d(Q-1) \over dQ-2} .
\end{align*}
By \eqref{p2}, Lemma A.3 and A.4 in \cite{BOS3}, we obtain
\begin{align*}\label{}
\Big\| \prod_{j=1}^{dQ} T_\la^{\Gamma} f_j \Big\|_{1/d ,\infty}
\le &C\la^{-2 d^2}
\,A_\la^{Q\frac{d-2}{d+2}} \times \\
&\times \| f_1 \|_{\ell^r_{\delta_1} (L^{p_1,r})} \| f_2
\|_{\ell^r_{\delta_2} (L^{p_2,r})} \prod_{j=3}^{dQ} \| f_j
\|_{\ell^r_{\delta_j} (L^{p_2,r})}
\end{align*}
where $\delta_1 = \beta_1$, $\delta_2 = \beta_2$ and
\[ \delta_j = {d-2 \over dQ-2} \beta_j + {d(Q-1) \over dQ-2} \delta_0, ~ 3\le j\le d .
\]

We may choose $a_j \in [0,1]$ such that
$\sum_{j=1}^d a_j=1$, and
$\delta_2 \not= \delta_3$. (Recall that
$\beta_j = [(d-2)/(d+2)] \alpha_j + [4/(d+2)] a$, $ \alpha_j = 1 - a_j/Q'$
and $a = (3-d)/4$. Thus, it is easy to see that we can satisfy the condition $\delta_2 \not= \delta_3$, by choosing $a_2$ and $a_3$ suitably.)

\medskip

{\sl Application of the interpolation theorem.}
We are now in a position to apply Theorem \ref{multtrick}.
Let us take $X_0 = L^{p_2,r}(d\mu )\text{  and  } X_1 = L^{p_1,r}(d\mu )$.
It follows from \eqref{Tconclusion} with $n = d Q$ and $V = L^{r, \infty}$
for $r=1/d$ that
\begin{align*}\label{}
\Big\| \prod_{j=1}^{dQ} T_\la^{\Gamma} f_j \Big\|_{1/d ,\infty}
\le C \la^{-2 d^2} \, A_\la^{Q\frac{d-2}{d+2}} \prod_{j=1}^{dQ} \| f_j
\|_{\ell^{Q}_s (\overline{X}_{{1\over n}, Q})}
\end{align*}
where $ s= [\delta_1 + \delta_2 + (n -2)\delta_3 ]/n$. Taking
all $f_j=f$ yields
\begin{equation}\label{ellq}
\| T_\la^{\Gamma} f\|_{Q,\infty} \le C \la^{-2d/Q}\,A_\la^{\frac{d-2}{d(d+2)}}
\| f\|_{\ell^{Q}_s (\overline{X}_{{1\over n}, Q})} .
\end{equation}
Note that we have $s=1/Q =2/(d^2 + d +2)$, since
\begin{align*}
dQ s = \sum_{j=1}^{dQ} \delta_j &=\delta_1+\delta_2+ (dQ-2) \Big(
{d-2 \over dQ-2} \beta_3 +
{d(Q-1)\over dQ-2}{1\over Q}\Big)\\
&= {d-2 \over d+2}\sum_{j=1}^d \alpha_j + { d(3-d) \over d+2} + {d(Q-1)\over Q}\\
&= {d-2 \over d+2} \Big( d - {1\over Q'} \Big) + {d(3-d) \over d+2} + {d \over Q' } = d .
\end{align*}

Moreover, we have
\begin{align*}
\overline{X}_{{1\over n}, Q} = (X_0, X_1)_{{1\over n},Q} = (L^{p_2,r},
L^{p_1,r})_{{1\over n}, Q} = L^{p,Q} = L^Q ( d\mu )
\end{align*}
since $p_1 \not= p_2$ and
\begin{align*}
 {1\over p} &:= {1\over n} {1\over p_1} +  {n - 1\over
n} {1\over p_2} = {1\over dQ} \Big({1\over p_1} + {1\over p_2} + {dQ -2 \over
 p_2}\Big)
 = {1\over Q}
\end{align*}
by the choice of $p_1, \cdots, p_d$ made above in the paragraph containing \eqref{p2}. Here we also used the fact ({\it cf.} Theorem 5.3.1 in \cite{BeL}) that if $p_0\not=p_1$, then
$$(L^{p_0, r_0}, L^{p_1, r_1})_{\theta, s} = L^{p, s}$$
for $1/p=(1-\theta)/p_0 + \theta/p_1$, $0<\theta <1$, and $0< s\le
\infty$. (As usual, $p_j$, $r_j \in (0, \infty]$, and we assume
$r_j=\infty$ when $p_j=\infty$.)

This shows that we have
\begin{align*}
 \| f\|_{\ell^{Q}_{s} \big( \overline{X}_{{1\over n}, Q}\big)} &= \| \{f
\chi_{\Omega_k}\} \|_{\ell^Q_{1/Q} (L^Q (d\mu ))} \\
&= \Big(\sum_{k\in \bbZ} \big[ 2^{k /Q} \| f
\chi_{\Omega_k}\|_{L^Q(d\mu )} \big]^Q \Big)^{1/Q} \approx \| f\|_{L^Q (w d\mu )}
\end{align*}
where the last equivalence is a consequence of the fact that $w(z) \approx 2^k$ for $z\in \Omega_k$.
So, \eqref{ellq} implies that
\begin{equation*}\label{}
\| T_\la^{\Gamma} f\|_{Q,\infty} \le C_{d,N} \la^{-\frac{2 d}{Q}} A_\la^{\frac{d-2}{d(d+2)}}
\| f\|_{L^{Q}(w d\mu )}
\end{equation*}
with a constant $C_{d,N}$ independent of $\la >1$ and $\Gamma$ with $|\alpha_i| \le 1$.

Hence, by the definition \eqref{Al} of $A_\la$, we obtain
\begin{equation*}\label{}
\,A_\la \le C_{d,N} \, A_\la^{\frac{d-2}{d^2+2d}} .
\end{equation*}
Since we have $A_\la < \infty$ for $\la >1$ by \eqref{Alafinite}, it follows
that $A_\la \le C(d,N) = (C_{d,N})^{ (d^2 + 2d) / (d^2 + d +2)}$, for all
$\la >1$. Therefore, we may conclude that the estimate
\begin{equation*}\label{}
\| T_\la^{\Gamma} f\|_{Q,\infty} \le  C(d,N) \la^{-\frac{2 d}{Q}} \,
 \| f\|_{L^{Q}(w d\mu )}
\end{equation*}
holds for $Q= (d^2 + d +2) /2$, uniformly in $\la >1$ and $\Gamma$. This completes the proof of \eqref{Tla-est}. Finally, we take $C(N) = \sum_{d=1}^N C(d,N)$. Taking $d=3$ gives the dual estimate of \eqref{affrestr-d3}. \qed

\section{Proof of Theorem \ref{compthm}}


The proof in the previous section carries over here with minor modifications. Thus, we only need to
indicate how to modify the argument to work in this situation.
Here we define offspring curves by
\[ \Gamma_b(z) = {1\over m} \sum_{i=1}^m \gamma(z+ b_i)
\]
where $b_i \in \bbC$ and $b_1 =0$. Again, by a scaling argument it suffices to prove the estimate for functions $f$
supported in $B(0,1)$ in $\bbC$ or $\bbR^2$.
We only need to divide $B(0,1)$ into a bounded number
of narrow sectors centered at the origin. By rotation (which is possible by
the homogeneity of $\phi(z)=z^N$ as in Section 3), it is enough to show the
estimate for $f$ supported in $\Delta = \{z = x+ iy \in B(0,1) :~
0< y < \varepsilon x \}$ with some small $\varepsilon = \varepsilon(d, N) >0$.

%
%
%
%

Define
\begin{equation}\label{Tla-d}
T_\la^{\Gamma_b} f(x)=  \psi(x) \int_{\Delta_b} e^{i\la x\cdot
\Gamma_b (z)} f(z) \, w_b (z) d\mu(z), \quad x\in \bbR^{2d} ,
\end{equation}
where $\psi(x)$ is a nonnegative cutoff function and $\Delta_b =
\bigcap_{i=1}^m (\Delta - b_i) \subset \Delta$. (Here, $\Delta -
a = \{ z - a  : z \in \Delta \}$ denotes a translation of $\Delta$.)

Recall that $Q =q_d =(d^2 +d +2)/2$. Define
\begin{equation}\label{Al-d}
A_\la = \la^{2d/Q} \cdot \sup_{\Gamma_b}\| T_{\la}^{\Gamma_b}
\|_{L^{Q}(\Delta_b, \, w_b d\mu )\rightarrow L^{Q,\infty}(\bbR^{2d})}
\end{equation}
where the supremum is taken over all $\Gamma_b$, with $b= (b_1, \cdots, b_m) \in \bbC^{m}$, $m \in \mathbb{N}$, $b_1=0$, and $|b_j|\le 1$, for $1 \le i \le m$. (Note that $\Delta_b$ is empty, if $|b_j|>1$ for some $i$.)

Let us show that $A_\la < \infty$, for each $\la >1$. By
H\"older's inequality and \eqref{wbpower} we have
\begin{align*}
 \| w_b \|_{L^1(\Delta_b , \, d\mu )} &\le |\Delta_b|^{\frac{d^2+d-4}{d^2+d}}\cdot \| w_b^{\frac{d^2+d}{4}} \|_{L^1(\Delta_b , \, d\mu
)}^{\frac{4}{d^2+d}} \\
&\le | \Delta |^{\frac{d^2+d-4}{d^2+d}} \cdot \Big( m^{-1} \sum_{j=1}^m \| \phi^{(d)}(\cdot+b_j)\|_{L^1(\Delta-b_j , \, d\mu
)} \Big)^{\frac{4}{d^2+d}}\\
&\le | \Delta |^{\frac{d^2+d-4}{d^2+d}} \cdot \| \phi^{(d)}\|_{L^1(\Delta , \, d\mu
)}^{\frac{4}{d^2+d}} \le C_{d,N}
\end{align*}
for some constant $C_{d,N}$ independent of $m \ge 1$ and $b$. So, by H\"older's inequality we
obtain
$$ \| f\|_{L^1 (\Delta_b , \, w_b d\mu )} \le
\| w_b \|_{L^1(\Delta_b , \, d\mu )}^{1/Q'} \| f\|_{L^{Q}(\Delta_b,
\, w_b d\mu )} \le C_{d,N}^{1/Q'} \| f\|_{L^{Q}(\Delta_b, \, w_b d\mu
)} .$$
Since $| T_{\la}^{\Gamma_b} f (x)| \le |\psi(x)| \cdot \| f\|_{L^1 (\Delta_b , \, w_b d\mu )}
$, the last inequality implies that
\begin{align*}
\| T_{\la}^{\Gamma_b} f \|_{L^{Q,\infty}(\bbR^{2d})} &\le \|
\psi \|_{L^{Q,\infty}(\bbR^{2d})} \| f\|_{L^1 (\Delta_b , \, w_b d\mu )}\\
& \le \|\psi \|_{L^{Q,\infty}(\bbR^{2d})} \cdot C_{d,N}^{1/Q'} \| f\|_{L^{Q}(\Delta_b, \, w_b d\mu
)} .
\end{align*}
Hence, it follows that for each $\la >1$,
\begin{equation}\label{Alafinite-d} A_\la \le \la^{2d/Q}\cdot
C_{d,N}^{1/Q'} \|\psi\|_{L^{Q,\infty}(\bbR^{2d})}  < \infty .
\end{equation}

%
%

It remains to show $A_\la \le C(d,N)$, uniformly in $\la >1$. Fix $\la >1$ and
$b$ such that $|b_i|\le 1$, $1\le i\le m$, and put $\Gamma(z,h) = \Gamma_b (z,h) = \sum_{j =1}^d \Gamma_b (z+h_j
) = m^{-1}\sum_{j =1}^d \sum_{i=1}^m \gamma(z+ b_i + h_j )$, with
$h=(h_2, h_3,\cdots,h_d )$, $h_1 = 0$ and $z+ b_i + h_j \in \Delta$.

Now set
\begin{equation*}\label{}
M_{\la} (f_1, f_2, \cdots, f_d)(x) = \prod_{j=1}^d (T_{\la}^{\Gamma_b}
f_j)(x) =
\end{equation*}
\begin{equation*}\label{}
\psi(x)^d \int \int_{\Delta_{b,h}} e^{i\la x\cdot
\Gamma(z,h)} \prod_{j=1}^d [f_j(z+ h_j) w_b(z+ h_j)]\ d\mu(z)\,
d\mu(h_2) \cdots d\mu(h_d) .
\end{equation*}
where $\Delta_{b,h} = \bigcap_{j=1}^d \bigcap_{i=1}^m(\Delta - b_i - h_j)$.

As before, define the decomposed operators by
\begin{equation*}\label{}
M_{\la, k} (f_1, f_2, \cdots ,f_d)(x) =
\end{equation*}
\begin{equation*}\label{}
\psi(x)^d \int_{S_k} \int_{\Delta_{b,h}} e^{i\la x\cdot
\Gamma(z,h)} \prod_{j=1}^d [f_j(z+ h_j) w_b (z+ h_j)]\ d\mu(z) \,
d\mu(h_2) \cdots d\mu(h_d)
\end{equation*}
where $S_k = {\{h\in B(1)^{d-1}:\ 2^{-k-1} < v(h) \le 2^{-k} \}}$, $k\in \bbZ$.

Note that $\Gamma(z,h)$ may be written in the form $d \cdot
(dm)^{-1} \sum_{i=1}^{dm} \gamma(z+ c_i)$ for some $c_i$. (In
fact, we may take $c_i = b_j+h_k$ with $c_1 = b_1 + h_1 =0$ and
the rest numbered in some way.) Thus, $\Gamma(z,h)$ is an
offspring curve except for the factor $d$. To remove the $d$, we make
the substitution $y=d \cdot x$, which dilates the support of the cutoff
function by a factor $d$. Since $\psi(y/d)$ is bounded by the sum of
$O(1)$ translates of $\psi(y)$, we may apply the definition of
$A_\la$. This only increases the constant by a bounded factor $C_d
$. (Moreover, observe that the new domain of
integration $\Delta_{b,h}$ is in the required form:
$\Delta_{b,h} = \bigcap_{i=1}^{dm} (\Delta - c_i)$ with $c_1 = 0$.)

\medskip

Let $J_\bbC (z, h) = J_\bbC (z, h_2,\cdots, h_d)$ denote, as before, the determinant of the holomorphic Jacobian matrix for the transformation $(z,h) = (z,h_2,\cdots, h_d) \mapsto \Gamma(z, h)$. Then Lemma \ref{jacest} implies that
\begin{align}\label{Jacobian-a-d}
J_\bbR (z, h) &= |J_\bbC (z, h)|^2\\ \notag
 &\ge c_{d,N} \, v(h)^2 \cdot {1\over d}\sum_{j=1}^d w_b (z+ h_j)^\frac{d^2+d}{2} \ge c_{d,N} \, v(h)^2 \prod_{j=1}^d w_b (z+ h_j)^\frac{d+1}{2}
\end{align}
for $z \in \Delta_{b,h} = \bigcap_{j=1}^d \bigcap_{i=1}^m(\Delta - b_i - h_j)$.

We also have
\begin{equation*}\label{}
|\tau(z,h)| = |\det (\Gamma'(z,h), \cdots, \Gamma^{(d)}(z,h))| = {1\over m}\, \Big| \sum_{i=1}^d \sum_{j=1}^m \phi
^{(d)} (z+ b_j+ h_i) \Big| .
\end{equation*}
Thus, as in the proof of Lemma \ref{jacest} we obtain
\begin{equation}\label{tauzh-d}
 |\tau(z,h)| \ge c_{d,N} \sum_{i=1}^d {1\over m}\, \sum_{j=1}^m |\phi
^{(d)} (z+ b_j+ h_i)| \ge c \max_{i=1,\cdots,d} w_b (z+ h_i)^\frac{d^2+d}{4}
\end{equation}
for $z \in \Delta_{b,h}$.
%
%

The estimates \eqref{tauzh-d} and \eqref{Jacobian-a-d} correspond to \eqref{Jacobian-a} (or \eqref{Jacbound}) and \eqref{tauzh} (or \eqref{torbound}), respectively, in the proof given in Section 6. (Note that here we need to keep track of the $b_i$'s unlike in the previous section. This is because only a weak form of a Jacobian bound, i.e. Lemma \ref{jacest}, is available in this context.)

The rest of the argument is the same as that in section 6. \qed


\medskip

\noindent{\sl Acknowledgements.}
The first-named author would like to thank
Dan Oberlin and Andreas Seeger for many helpful conversations about the subject matter of the paper. This paper is a by-product of a long-term collaboration with them. We thank also the anonymous referee for an earlier version of this paper for many useful suggestions which greatly helped improve our exposition.


\begin{thebibliography}{99999}                                                                                               %
\bibitem{ACK}  G.I. Arkhipov,  V.N. Chubarikov, A.A.  Karatsuba,
 Trigonometric sums in number theory and analysis. Translated
from the 1987 Russian original. de Gruyter Expositions in
Mathematics, 39, Berlin, 2004.


\bibitem {BL} J.-G. Bak, S.H. Lee, Estimates for an oscillatory
integral operator related to restriction to space curves,
\textit{Proc. Amer. Math. Soc. } \textbf{132} (2004), 1393--1401.



\bibitem{BOS1} J.-G. Bak, D.M. Oberlin,  A. Seeger,
Restriction of Fourier transforms to curves and related
oscillatory integrals, \textit{Amer. J. Math.}, \textbf{131} (2009),
277--311.

\bibitem{BOS2} \bysame,
Restriction of Fourier transforms to curves, II: Some classes with
vanishing torsion, \textit{J. Austral. Math. Soc.}, \textbf{85} (2008),
1--28.

\bibitem{BOS3} \bysame,
Restriction of Fourier transforms to curves: An endpoint
estimate with affine arclength measure, to appear in \textit{J. Reine Angew. Math.}

\bibitem{bcss} W. Beckner, A. Carbery, S. Semmes,  F. Soria,
A note on restriction of the Fourier transform to spheres, \textit{Bull. London Math. Soc.},
\textbf{21} (1989),  394--398.

\bibitem{BeL} J. Bergh, J. L\"ofstr\"om, Interpolation spaces. An introduction.
Grundlehren der Mathematischen Wissenschaften, No. 223.
Springer-Verlag, Berlin-New York, 1976.

\bibitem{BGGIST} L. Brandolini, G. Gigante, A. Greenleaf, A. Iosevich, A. Seeger, G. Travaglini,
Averaged decay estimates for Fourier transforms of measures supported on curves,
\textit{J. Geom. Anal.}, \textbf{17} (1) (2007), 15--40.

\bibitem{CRW} A. Carbery, F. Ricci, J. Wright, Maximal functions and Hilbert transforms associated to polynomials.
\textit{Rev. Mat. Iberoamericana}, \textbf{14} (1998), no. 1, 117--144.

\bibitem{CRW2} \bysame, Maximal functions and singular integrals associated to polynomial mappings of $\Bbb R^n$, \textit{Rev. Mat. Iberoamericana}, \textbf{19} (2003), no. 1, 1--22.

\bibitem{Ch} M. Christ, On the restriction of the Fourier transform to
curves: endpoint results and the degenerate case, \textit{Trans.
Amer. Math. Soc.} \textbf{287} (1985), 223--238.

\bibitem{DeFW} S.  Dendrinos, M.  Folch-Gabayet, J.  Wright,
An affine-invariant inequality for rational functions and applications in harmonic analysis, \textit{Proc. Edinb. Math. Soc.} (2) \textbf{53} (2010), no. 3, 639--655.

\bibitem{DLW} S. Dendrinos, N.  Laghi, J.  Wright,  Universal $L^p$ improving for averages along polynomial curves in low dimensions, \textit{J. Funct. Anal.} \textbf{257} (2009),  no. 5, 1355--1378.

\bibitem{DeM} S. Dendrinos, D. M\"uller, Uniform estimates for the local restriction of the Fourier transform to curves, to appear in Trans. Amer. Math. Soc.

\bibitem{DeW} S. Dendrinos, J. Wright, Fourier restriction to polynomial curves I: A geometric inequality. \textit{Amer. J. Math.} \textbf{132} (2010), no. 4, 1031--1076.

\bibitem{D1} S. Drury, Restriction of Fourier transforms to curves,
\textit{Ann. Inst. Fourier}, \textbf{35} (1985), 117--123.

\bibitem{D2} \bysame, Degenerate  curves and harmonic analysis,
\textit{Math. Proc. Cambridge Philos. Soc.} \textbf{108} (1990),
89--96.

\bibitem{DG} S. Drury, K. Guo, Some remarks on the restriction of the Fourier transform to surfaces, \textit{Math. Proc. Cambridge Philos. Soc.} \textbf{113} (1993), no. 1, 153--159.


\bibitem {DM1} S. Drury, B. Marshall, Fourier restriction theorems for
curves with affine and Euclidean arclengths, \textit{Math. Proc.
Cambridge Philos. Soc.} \textbf{97} (1985), 111--125.

\bibitem {DM2}\bysame, Fourier restriction theorems for degenerate curves,
\textit{Math. Proc. Cambridge Philos. Soc.} \textbf{101} (1987),
541--553.

\bibitem {FW} M. Folch-Gabayet, J. Wright, Singular integral operators associated to curves with rational components, \textit{Trans. Amer. Math. Soc.} \textbf{360} (2007), 1661--1679.


\bibitem{kalton} N. Kalton, Linear operators on $L^p$ for $0<p<1$,
\textit{Trans. Amer. Math. Soc.} \textbf{259} (1980), 319--355.

\bibitem {Kr} S. Krantz, Function theory of several complex
variables, 2nd ed., Wadsworth and Brooks, 1992.

\bibitem{Ob} D.M. Oberlin, Some convolution inequalities and their applications,
\textit{Trans. Amer. Math. Soc.} \textbf{354} (2002), no. 6,
2541--2556.


\bibitem{Obaff}\bysame, Affine dimension: measuring the vestiges of curvature,
\textit{Michigan Math. J.} \textbf{51} (2003) 13--26.

\bibitem{Sj} P. Sj\"olin,
Fourier multipliers and estimates of the Fourier transform of measures carried by smooth curves in $R\sp{2}$, \textit{Studia Math.}, \textbf{51} (1974), 169--182.

\bibitem{stw} E.M. Stein, M. Taibleson, G. Weiss, Weak type estimates for maximal operators on certain
$H\sp{p}$\ classes, Proceedings of the Seminar on Harmonic Analysis (Pisa, 1980). \textit{Rend. Circ. Mat. Palermo} (2), 1981,  suppl. 1, 81--97.

\bibitem{Sto} B. Stovall, Endpoint $L^p\to L^q$ bounds for integration along certain polynomial curves, \textit{J. Funct. Anal.} \textbf{259} (2010), no. 12, 3205--3229.
\end{thebibliography}
\end{document}